\DeclareTextFontCommand{\myemph}{\bfseries\em}
\newcommand{\calg}{\mathbf{CAlg}} 
\newcommand{\opp}{\mathrm{op}} 
\newcommand{\shv}{\mathbf{Shv}} 
\newcommand{\catinf}{\mathbf{\boldsymbol{\mathcal{C}}at}_{\infty}} 
\newcommand{\pr}{\boldsymbol{\mathcal{P}}r} 
\newcommand{\prl}{\mathbf{\pr}^{\mathrm{L}}} 
\newcommand{\prlc}{\prl_{\omega}} 
\newcommand{\prst}{\mathbf{\pr}^{\mathrm{st}}} 
\newcommand{\prstc}{\prst_{\omega}} 
\newcommand{\opr}[1]{ #1^{\otimes}} 
\newcommand{\module}[1]{\mathbf{Mod}_{#1}} 
\DeclareMathOperator{\frc}{Frac} 
\DeclareMathOperator{\ext}{Ext} 
\DeclareMathOperator{\sym}{Sym} 
\newcommand{\inv}{^{-1}} 
\newcommand{\kch}{\boldsymbol{\mathcal{K}}} 
\newcommand{\bkch}{\kch^{\mathrm{b}}} 
\newcommand{\dcat}{\mathcal{D}}
\newcommand{\bdcat}{\dcat^{\mathrm{b}}}
\DeclareMathOperator{\gal}{Gal} 
\newcommand{\Z}{\mathbb{Z}} 
\newcommand{\Q}{\mathbb{Q}} 
\DeclareMathOperator{\spf}{Spf} 
\newcommand{\ets}{_{\text{\'{e}t}}} 
\newcommand{\qcoh}{\mathbf{QCoh}} 
\newcommand{\rgama}[1]{\mathrm{R}\boldsymbol{\Gamma}_{#1}} 
\DeclareMathOperator{\htcat}{h} 
\newcommand{\subss}[1]{ #1_{\bullet}} 
\newcommand{\map}{\mathrm{Map}}  
\DeclareMathOperator{\spa}{Spa} 
\newcommand{\solid}{{\scalebox{0.5}{$\square$}}} 
\newcommand{\agmot}{\mathbf{DA}}
\newcommand{\fagmot}{\mathbf{FDA}}
\newcommand{\rigmot}{\mathbf{RigDA}}
\newtheorem{thm}{Theorem}[section]
\newtheorem{pro}[thm]{Proposition}
\newtheorem{cor}[thm]{Corollary}
\newtheorem{lem}[thm]{Lemma}
\theoremstyle{definition}
\newtheorem{df}[thm]{Definition}
\newtheorem{eg}[thm]{Example}
\newtheorem{rmk}[thm]{Remark}
\newcommand{\proofpart}[2]{%
  \par
  \addvspace{\medskipamount}%
  \noindent\underline{\bf Step #1: #2}\par\nobreak
  \addvspace{\smallskipamount}%
  \@afterheading
}
\numberwithin{equation}{section}
\definecolor{terblue}{HTML}{5371C6}
\definecolor{haoblue}{HTML}{83CCEB}
\definecolor{terpink}{HTML}{FF9497}
\titleformat{\section}[block]
{\normalfont\Large\bfseries\filcenter}
{\thesection}
{8pt}{}
\titleformat{\subsubsection}[block]
{\bfseries\large\normalsize}
{}
{0pt}{}
\title{\LARGE \textbf{Comparison of Hyodo-Kato and de Rham Fargues-Fontaine Cohomology Theories}} 
\author{Kaixing Cao}
\date{\today}
\newtheorem*{conj}{Conjecture}
\begin{document}

\maketitle
\begin{abstract}
  We prove that, for adic \'{e}tale motives over $\mathbb{C}_p$, the vector bundles on the Fargues-Fontaine curve arising from their Hyodo-Kato cohomology coincide with their de Rham-Fargues-Fontaine cohomologies, where the latter provides an overconvergent refinement of crystalline vector bundles, albeit constructed on the generic fiber. This equivalence is established in the setting of symmetric monoidal $\infty$-categories and respects the full motivic structure. Furthermore, we enrich both realizations with Galois actions, yielding $G_{\breve{\Q}_{p}}$-equivariant solid quasi-coherent sheaves on the Fargues-Fontaine curve; in this equivariant context, the comparison isomorphism becomes canonical. As an application, we show that the de Rham-Fargues-Fontaine cohomology of any smooth quasi-compact rigid analytic variety over $\mathbb{C}_p$ admits a finite slope-increasing filtration.
\end{abstract}

\tableofcontents\thispagestyle{empty}
\section{Introduction}

Let $K$ be a complete discrete valued field of characteristic zero with perfect residue field $k$ of characteristic $p>0$. We fix an algebraic closure $\bar{K}$ of $K$ and let $C=\hat{\bar{K}}$ denote its completion. Then the residue field of $C$, denoted by $\bar{k}$, is an algebraic closure of $k$.

\subsection{Background and Motivation}
Bhatt, Morrow and Scholze studied various $p$-adic cohomology theories in \cite{BMS18}. We begin by recalling the geometrization of certain results in \cite{BMS18} from the perspective of the \myemph{Fargues-Fontaine curve} $\mathbf{FF}:= \mathbf{FF}_{C^{\flat}, \Q_p}$, where $C^{\flat}$ is the tilt of $C$. Let $\mathfrak{X}$ be a proper smooth formal scheme over $\mathcal{O}_C$. We consider the following cohomology theories:
\begin{itemize}
\item the $p$-adic \'{e}tale cohomology $\rgama{\text{\'{e}t}}(\mathfrak{X}_C, \Z_p)$ defined on the rigid generic fiber $\mathfrak{X}_C$;
\item the crystalline cohomology $\rgama{\mathrm{crys}}(\mathfrak{X}_{\bar{k}}/W(\bar{k}))$ defined on the special fiber $\mathfrak{X}_{\bar{k}}$;
\item a new cohomology theory $\rgama{\mathrm{crys}}(\mathfrak{X}_C/B_{\mathrm{dR}}^{+})$ defined on the rigid generic fiber.
\end{itemize}
Some of the comparison results in \cite{BMS18} yield a modification $\mathcal{E}\ets \to \mathcal{E}_{\mathrm{crys}}$ of vector bundles on the Fargues-Fontaine curve, where
\[
\mathcal{E}\ets:= H^i\ets(\mathfrak{X}_C,\Z_p) \otimes_{\Z_p} \mathcal{O}_{\mathbf{FF}}, \quad \mathcal{E}_{\mathrm{crys}}:= \mathcal{E}(H^i_{\mathrm{crys}}(\mathfrak{X}_{\bar{k}}/W(\bar{k})[1/p]),\varphi)
\]
and the functor $\mathcal{E}$ is Fargues-Fontaine's functor from $\varphi$-modules over $\bar{k}$ to vector bundles on $\mathbf{FF}$. More precisely, the crystalline vector bundle $\mathcal{E}_{\mathrm{crys}}$ is obtained from the \'{e}tale vector bundle $\mathcal{E} \ets$ by modifying it at the distinguished point $x_C$ determined by Fontaine's map $\theta \colon A_{\mathrm{inf}} \to \mathcal{O}_C$. This modification at $x_{C}$ is prescribed by the crystalline $B_{\mathrm{dR}}^+$-cohomology $H^i_{\mathrm{crys}}(\mathfrak{X}_C/ B_{\mathrm{dR}}^{+})$. In other words, the crystalline cohomology can be reconstructed from two cohomology data on the rigid generic fiber $\mathfrak{X}_{C}$: the $p$-adic \'{e}tale cohomology and the crystalline $B_{\mathrm{dR}}^+$-cohomology.

Together with the work of Colmez and Nizioł \cite{CN17,Niz19}, these results motivate the following conjecture, which seeks a direct cohomological construction for $\mathcal{E}_{\mathrm{crys}}$ on the rigid generic fiber:

\begin{conj}[{\cite[Conjecture 1.13]{faricm}, \cite[Conjecture 6.4]{Schicm}}]
  Let $\rigmot(C)$ be the $\infty$-category of adic \'{e}tale motives over $C$. There exists a motivic realization functor defined on compact rigid motives
  \[
\rgama{\mathrm{FF}} \colon \rigmot(C)_{\omega} \to \mathbf{Perf}(\mathbf{FF})
\]
sending each smooth quasi-compact rigid analytic variety $X$ over $C$ to perfect complexes $\rgama{\mathrm{FF}}(X)$ on the Fargues-Fontaine; in particular, each cohomology $\mathcal{H}^i_{\mathbf{FF}}(X)$ is a vector bundle on the Fargues-Fontaine curve. It is expected to satisfy the following properties:
\begin{enumerate}[label=(\arabic*)]
\item\label{conj1} the pullback $x_C^{*} \rgama{\mathrm{\mathbf{FF}}}(X)$ to the distinguished point is isomorphic to the overconvergent de Rham cohomology $\rgama{\mathrm{dR}}^{\dagger}(X)$;
  
\item\label{conj2} the complete stalk of $\rgama{\mathrm{FF}}(X)$ at $x_{C}$ recovers the overconvergent $B_{\mathrm{dR}}^+$-cohomology;
  
\item\label{conj3} in the case of semistable reduction, it agrees with the vector bundle associated to log-crystalline cohomology on the special fiber.
\end{enumerate}
From now on, we refer to any such cohomology theory as the \myemph{Fargues-Fontaine cohomology}.
\end{conj}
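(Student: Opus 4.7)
The plan is to produce the candidate $\rgama{\mathrm{FF}}$ as a \emph{de Rham-Fargues-Fontaine} realization built from overconvergent de Rham cohomology, verify (1) and (2) essentially by construction, and then obtain (3) from a motivic comparison with the log-crystalline (Hyodo-Kato) construction in the semistable case. Everything should be carried out in the setting of symmetric monoidal $\infty$-categories, so that the motivic functoriality of $\rigmot(C)_{\omega}$ automatically promotes any pointwise comparison to a natural equivalence of symmetric monoidal functors.

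First I would construct a motivic overconvergent de Rham realization on $\rigmot(C)_{\omega}$ together with its refinement over $B_{\mathrm{dR}}^+$, and pair it with the motivic $p$-adic \'{e}tale realization. A Beauville-Laszlo-type datum along the distinguished point $x_C$, with modification prescribed by the overconvergent $B_{\mathrm{dR}}^+$-cohomology, then assembles the two into a functor
\[
\rgama{\mathrm{FF}} \colon \rigmot(C)_{\omega} \to \mathbf{Perf}(\mathbf{FF}).
\]
Property (2) is built in, and (1) follows by reduction modulo the uniformizer of $B_{\mathrm{dR}}^+$.

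To establish (3), in the semistable case I would build a parallel Hyodo-Kato-Fargues-Fontaine functor by applying Fargues-Fontaine's functor $\mathcal{E}$ to the pair $(\rgama{\mathrm{HK}}, \varphi)$ of log-crystalline cohomology with its Frobenius, and then exhibit a natural equivalence with $\rgama{\mathrm{FF}}$. At the pointwise level this is essentially the Hyodo-Kato isomorphism after a twist by a choice of uniformizer and base change to $C$; in the $\infty$-categorical motivic framework, however, one must track a considerable amount of coherence. To resolve this I would work $G_{\breve{\Q}_{p}}$-equivariantly over $\breve{\Q}_{p}$, inside solid quasi-coherent sheaves on $\mathbf{FF}$, so that the equivariant category is rigid enough for the comparison isomorphism to be canonical; descending to $C$ then yields the desired natural transformation.

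The main obstacle will be propagating the comparison from generators with semistable reduction to all of $\rigmot(C)_{\omega}$. For this I would combine $h$-descent for adic rigid analytic motives with de Jong-Temkin semistable alterations, so that every compact motive admits a presentation by semistable generators along which the equivalence glues. A collateral benefit is that each $\mathcal{H}^i_{\mathbf{FF}}(X)$ is automatically a vector bundle rather than merely a perfect complex, since via the comparison it lies in the essential image of $\mathcal{E}$; and the abstract's slope-increasing filtration then falls out of the Harder-Narasimhan structure transported from the $\varphi$-module side.
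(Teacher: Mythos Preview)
Your broad outline---construct a de Rham-Fargues-Fontaine realization, then compare it to $\mathcal{E}$ applied to Hyodo-Kato, invoking Galois equivariance for canonicity---matches the paper's arc, but the actual mechanisms differ in ways that matter, and two of your steps are genuinely off.

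First, the construction. You propose a Beauville-Laszlo gluing of the \'{e}tale realization against overconvergent $B_{\mathrm{dR}}^+$-cohomology along $x_C$. That is not how $\rgama{\mathrm{FF}}$ is built: the paper takes it from Le Bras--Vezzani, where one passes through the tilting equivalence $\rigmot(C)\simeq\rigmot(C^{\flat})$, spreads out Frobenius-equivariantly from $x_0$ to the curve (Proposition~\ref{pro:sp-out-curve}), and applies the relative overconvergent de Rham realization over $\mathbf{FF}$. Properties (1) and (2) are then quoted from that source, not recovered from a gluing datum.

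Second, your proof of (3) has a real gap. You want to verify the comparison pointwise on semistable objects via the Hyodo-Kato isomorphism and then propagate by $h$-descent and alterations; but promoting a pointwise identification to a monoidal natural equivalence of $\infty$-functors is exactly the hard part, and alterations do not supply the needed coherence. The paper bypasses this entirely: using $\rigmot(C)\simeq\module{\chi{\mathbbm 1}}(\agmot(\bar{k}))$ with $\chi{\mathbbm 1}\simeq S({\mathbbm 1}(-1)[-1])$, it shows that both $\rgama{\mathrm{FF}}$ and $\mathcal{E}_N\circ\rgama{\mathrm{HK}}$ restrict along the Monsky-Washnitzer functor $\xi$ to $\mathcal{E}\circ\rgama{\mathrm{rig}}$ (Propositions~\ref{pro:HK-comp-rig}, \ref{pro:drFF-rig}), and then computes the space of monoidal extensions to all of $\rigmot(C)$ via Proposition~\ref{pro:mapsp-functors}, reducing to the single vanishing $\ext^1(\mathcal{O}_{\mathbf{FF}}(-1),\mathcal{O}_{\mathbf{FF}})=0$. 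No descent or alterations enter. Your $G_{\breve{K}}$-equivariant idea does appear in the paper, but only afterward, to make the (already established) equivalence canonical.

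Finally, your last sentence is wrong: the slope-increasing filtration on $\mathcal{H}^n_{\mathrm{FF}}(X)$ is explicitly \emph{not} the Harder-Narasimhan filtration (Remark~\ref{rmk:not-HN-fil}); Harder-Narasimhan slopes decrease. It comes instead from the motivic weight structure via the weight complex functor, combined with the fact that a pure $\varphi$-module of weight $i$ is semistable of slope $i/2$ (Proposition~\ref{pro:pure-isoc-half-slope}).
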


\begin{rmk}
\begin{enumerate}
\item Such a cohomology theory can in fact be extended to the entire category of rigid motives via Clausen and Scholze’s condensed mathematics. This yields a motivic realization functor on the whole category of motives
\[
\rgama{\mathrm{FF}} \colon \rigmot(C) \to \qcoh(\mathbf{FF})
\]
valued in solid quasi-coherent sheaves on the Fargues-Fontaine curve, in the sense of \cite{Andreqcoh21}.

\item All cohomology theories appearing in the conjecture are overconvergent since the category $\rigmot(C)$ serves as a model for mixed motives that allows one to treat non-proper spaces.
\end{enumerate}
\end{rmk}


\subsection{Hyodo-Kato Cohomology on the Rigid Generic Fiber}
The part \ref{conj3} of the conjecture asks for a comparison with log-crystalline cohomology, which is defined on the special fiber. However, Colmez and Nizioł, adapting a construction of Beilinson for algebraic varieties in \cite{Bei13}, introduced a Hyodo-Kato cohomology theory for rigid analytic spaces in \cite{CN20}. When $X$ is a rigid analytic space with semistable model $\mathfrak{X}$ over $\mathcal{O}_C$, their local-global compatibility result shows that the Hyodo-Kato cohomology $\rgama{\mathrm{HK}}(X)$ is isomorphic, as a $(\varphi,N)$-module, to the log-crystalline cohomology $\rgama{\mathrm{crys}}(\mathfrak{X}^0_{\mathcal{O}_C/p}/ \mathcal{O}_{\breve{K}}^0)[1/p]$. This leads to a refinement of part \ref{conj3} of the conjecture:
\begin{enumerate}[start=3,label={(\arabic*')}]
\item\label{conj4} the Fargues-Fontaine cohomology agrees with the overconvergent Hyodo-Kato cohomology.
\end{enumerate}

Since the Fargues-Fontaine cohomology is expected to be motivic, it is natural to formulate a comparison with the motivic (overconvergent) Hyodo-Kato cohomology: fixing a pseudo-uniformizer $\varpi \in \mathcal{O}_C$, we have the \myemph{$\boldsymbol{\varpi}$-Hyodo-Kato realization functor}
\begin{equation}
  \label{eq:intro-HK-varpi}
\rgama{\mathrm{HK}}^{\varpi} \colon \rigmot(C) \to \dcat_{(\varphi,N)}(\breve{K}),
\end{equation}
where $\breve{K}=\frc W(\bar{k})$. The canonical choice is $\varpi=p$, in which case we simply refer to it as the \myemph{Hyodo-Kato realization}, denoted by $\rgama{\mathrm{HK}}$.

\begin{rmk}
  \label{rmk:intro-HK-pse}
  As observed in \cite[Remark 4.18]{BGV25} and \cite[Remark 3.5]{BKV25}, different choices of pseudo-uniformizers affect only the behaviors of the monodromy operator on the Hyodo-Kato cohomology.
\end{rmk}


\subsection{The De Rham-Fargues-Fontaine Cohomology}

The main goal of this paper is to show that the de Rham-Fargues-Fontaine cohomology
\[
\rgama{\mathrm{FF}} \colon \rigmot(C) \simeq \rigmot(C^{\flat}) \to \qcoh(\mathbf{FF})
\]
constructed by Le Bras and Vezzani in \cite{LBV23}, qualifies as a Fargues-Fontaine cohomology; that is, it satisfies all properties stated in the conjecture above. Here the first equivalence is the motivic tilting equivalence established in \cite{Vez19a}.

To this end, since Le Bras and Vezzani have already shown that this cohomology theory satisfies parts \ref{conj1} and \ref{conj2} of the conjecture, it remains to prove the comparison with the Hyodo–Kato realization \eqref{eq:intro-HK-varpi}, namely part \ref{conj4}:

\begin{thm}[Theorem \ref{thm:FFHK-comp}, Remark \ref{rmk:indpend-psu}]
  \label{thm:introcomp}
  We have a monoidal equivalence $\rgama{\mathrm{FF}} \simeq \mathcal{E}_N \circ \rgama{\mathrm{HK}}^{\varpi} \colon \rigmot(C) \to \qcoh(\mathbf{FF})$ in $\calg(\prstc)$. Here $\mathcal{E}_N \colon \dcat_{(\varphi,N)}(\breve{K}) \to \qcoh(\mathbf{FF})$ extends the Fargues-Fontaine's functor from $(\varphi,N)$-modules over $\breve{K}$ to vector bundles over the Fargues-Fontaine curve.
\end{thm}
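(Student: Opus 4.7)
The plan is to compare two symmetric monoidal, colimit-preserving functors $\rgama{\mathrm{FF}}$ and $\mathcal{E}_N \circ \rgama{\mathrm{HK}}^{\varpi}$ from $\rigmot(C)$ to $\qcoh(\mathbf{FF})$ inside $\calg(\prstc)$. Because both sides preserve compact objects and filtered colimits, and because $\rigmot(C) = \mathrm{Ind}(\rigmot(C)_{\omega})$, it suffices to compare their restrictions to the compact subcategory, where the target lands in $\mathbf{Perf}(\mathbf{FF})$; symmetric monoidality then further reduces the verification to a set of monoidal generators. The natural candidates are (shifts of) motives of smooth quasi-compact rigid analytic varieties $X/C$ admitting a semistable formal model $\mathfrak{X}$ over $\mathcal{O}_C$, since in that case the Hyodo-Kato cohomology of Colmez-Nizioł identifies with the log-crystalline cohomology of the log special fiber $\mathfrak{X}^0_{\mathcal{O}_C/p}$ as a $(\varphi,N)$-module, and Fargues-Fontaine's classical functor $\mathcal{E}_N$ applied to the latter is precisely the crystalline vector bundle appearing in \cite{BMS18}. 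Reduction to such generators is legitimate thanks to semistable alteration results (de Jong, Temkin) combined with the h-descent property that both realizations enjoy on $\rigmot(C)$.

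To produce the comparison natural transformation I would introduce an intermediate symmetric monoidal realization that sees both the $(\varphi,N)$-structure over $\breve{K}$ and the $B_{\mathrm{dR}}^+$-structure at the distinguished point $x_C$. Concretely, the strategy is to lift $\rgama{\mathrm{HK}}^{\varpi}$ together with an overconvergent comparison datum to a functor landing in a category of filtered $(\varphi,N)$-modules equipped with a $B_{\mathrm{dR}}^+$-lattice, and then apply a refined Fargues-Fontaine functor that is known to factor $\mathcal{E}_N$. On the other hand, by the results of Le Bras-Vezzani, $\rgama{\mathrm{FF}}$ factors through the same diagram via its identification with the overconvergent de Rham cohomology at $x_C$ (property \ref{conj1}) and with the overconvergent $B_{\mathrm{dR}}^+$-cohomology on the complete stalk (property \ref{conj2}). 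Producing the natural transformation then amounts to constructing a canonical monoidal map between the two intermediate realizations, which on the generators above is the classical Colmez-Nizioł-Beilinson Hyodo-Kato-to-de Rham comparison $\rgama{\mathrm{HK}}(X) \otimes_{\breve{K}} B_{\mathrm{st}}^{+} \to \rgama{\mathrm{dR}}^{\dagger}(X) \otimes B_{\mathrm{st}}^{+}$.

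Once the transformation is in place, one verifies it is an equivalence on each semistable generator by direct comparison with \cite{BMS18}: after pullback to $x_C$ and after passage to the complete stalk, both sides yield the overconvergent de Rham and overconvergent $B_{\mathrm{dR}}^{+}$-cohomology respectively, so the induced modification of vector bundles coincides with the one defined by $\mathcal{E}_N$ applied to the log-crystalline $(\varphi,N)$-module. The invariance under the choice of pseudo-uniformizer $\varpi$ (Remark \ref{rmk:intro-HK-pse}) ensures that the statement is independent of $\varpi$, and the motivic tilting equivalence allows one to match the realizations through $\rigmot(C^{\flat})$ as required by Le Bras-Vezzani's construction.

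The principal technical obstacle is upgrading the pointwise Colmez-Nizioł comparison into a morphism of symmetric monoidal functors of stable presentable $\infty$-categories: this demands that the Hyodo-Kato realization, the $B_{\mathrm{dR}}^+$-realization, and their compatibility be assembled into a single lax monoidal diagram, and that the descent from semistable generators to all of $\rigmot(C)$ respect the full $\calg(\prstc)$-structure rather than merely the underlying triangulated comparison. Handling the $\infty$-categorical coherence through the tilting equivalence and the solid quasi-coherent target, together with controlling the compatibility of $\mathcal{E}_N$ with tensor products and shifts, is where I expect the most delicate work to lie.
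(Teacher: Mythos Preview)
Your proposal takes a genuinely different route from the paper, and the difference is instructive.

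You aim to \emph{construct} a monoidal natural transformation by assembling the pointwise Colmez--Nizio{\l} / BMS comparison isomorphisms into a coherent diagram, reducing to semistable generators via alterations and h-descent, and then checking the map is an equivalence there. You correctly identify the hard part: upgrading a family of pointwise comparison isomorphisms to a morphism in $\calg(\prstc)$ requires infinite coherence data, and nothing in your outline actually provides it. The ``intermediate symmetric monoidal realization'' you allude to is not specified, and this is exactly the step where such arguments typically fail.

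The paper sidesteps this problem entirely. It never constructs a comparison map. Instead it observes that both $\rgama{\mathrm{FF}}$ and $\mathcal{E}_N \circ \rgama{\mathrm{HK}}$ restrict along the Monsky--Washnitzer functor $\xi_C \colon \agmot(\bar{k}) \to \rigmot(C)$ to $\mathcal{E} \circ \rgama{\mathrm{rig}}$ (Propositions~\ref{pro:HK-comp-rig} and~\ref{pro:drFF-rig}), so both are objects of the undercategory $\calg(\prstc)_{\agmot(\bar{k})/-}$. Using the identification $\rigmot(C) \simeq \module{\chi \mathbbm{1}}(\agmot(\bar{k}))$ with $\chi \mathbbm{1}$ free on $\mathbbm{1}(-1)[-1]$ (Example~\ref{eg:chi1-free}), the universal property of module categories (Proposition~\ref{pro:mapsp-functors}) gives
\[
\pi_0\,\map_{\calg(\prstc)_{\agmot(\bar{k})/-}}\bigl(\rigmot(C),\qcoh(\mathbf{FF})\bigr)\;\simeq\;\ext^1\bigl(\mathcal{O}_{\mathbf{FF}}(-1),\mathcal{O}_{\mathbf{FF}}\bigr)\;=\;0,
\]
so there is a \emph{unique} such functor up to homotopy, and the two realizations must agree. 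No alterations, no BMS input, no explicit comparison datum: the equivalence is forced by a single Ext vanishing on the curve.

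What each approach buys: yours, if it could be completed, would produce a concrete comparison isomorphism tied to the classical period maps; the paper's gives existence cheaply but at the cost of canonicity (indeed Remark~\ref{rmk:monoidal-eq-noncan} shows $\pi_1$ of the mapping space is $B^{\varphi=p}$, so the equivalence is non-unique), which is why the paper later adds Galois equivariance to pin it down. Your plan is not wrong in spirit, but as written it is a sketch whose central difficulty remains unresolved, whereas the paper's argument is complete and dramatically shorter.
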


The proof of Theorem \ref{thm:introcomp} involves several crucial aspects. First, one has the key identifications (\cite[Theorem 3.7.21, Theorem 3.3.3]{AGV22})
\[
\rigmot(C) \simeq \rigmot_{\mathrm{gr}}(C) \simeq \module{\chi {\mathbbm 1}} (\agmot(\bar{k}))
\]
where $\chi {\mathbbm 1} \simeq {\mathbbm 1} \oplus {\mathbbm 1}(-1)[-1]$ is the cohomological motive of $\mathbb{G}_m$, arising from the fixed pseudo-uniformizer $\varpi \in \mathcal{O}_C$. Under this identification, the realization functors become functors from $\module{\chi {\mathbbm 1}}(\agmot(\bar{k}))$ to $\qcoh(\mathbf{FF})$. However, the mapping space
\[
\map(\module{\chi {\mathbbm 1}}(\agmot(\bar{k})), \qcoh(\mathbf{FF}))
\]
is not connected. Fortunately, both realization functors can be compared to the rigid realization functor (Proposition \ref{pro:HK-comp-rig} and Proposition \ref{pro:drFF-rig}). More precisely, let
\[
\xi \colon \agmot(\bar{k}) \to \rigmot(C)
\]
be the \myemph{Monsky-Washinitzer functor} defined in \cite{MWrig} (see also \eqref{eq:MWfunctor}). We have the following comparisons with rigid cohomology
\[
\mathcal{E}_N \circ \rgama{\mathrm{HK}} \circ \xi \simeq \mathcal{E} \circ \rgama{\mathrm{rig}} \simeq \rgama{\mathrm{FF}} \circ \xi
\]
in $\calg(\prstc)$. Therefore, these realization functors not only lie in $\calg(\prstc)$, but also in the category $\calg(\prstc)_{\agmot(\bar{k})/-}$ through their comparisons with the rigid cohomology. The refined mapping space
\[
\map_{\calg(\prstc)_{\agmot(\bar{k})/-}} \left( \rigmot(C), \qcoh(\mathbf{FF}) \right),
\]
particularly its set of connected components, can then be analyzed in terms of extension groups of vector bundles over the Fargues-Fontaine curve; see Proposition \ref{pro:mapsp-functors}.

\subsubsection*{Galois Enrichment for Motives and the Uniqueness of Comparisons}

Although we have a monoidal equivalence $\rgama{\mathrm{FF}} \simeq \mathcal{E}_N \circ \rgama{\mathrm{HK}}$ in $\calg(\prstc)$ obtained via a computation of the relevant mapping space, the equivalence is not canonical. Indeed, the space of such monoidal equivalences is large; see Remark \Ref{rmk:monoidal-eq-noncan}. As a result, there is no canonical comparison between these two realization functors.

To obtain the canonical comparison, we must incorporate the Galois action. More precisely, let $G_{\breve{K}}$ be the absolute Galois group of $\breve{K}$. Then both realization functors above can be upgraded to take values in the $\infty$-category of $G_{\breve{K}}$-equivariant solid quasi-coherent sheaves on $\mathbf{FF}$. That Hyodo-Kato cohomology naturally yields $G_{\breve{K}}$-equivariant vector bundles on the Fargues-Fontaine is already known from \cite{thecurve}.

We now explain how to equip the de Rham-Fargues-Fontaine cohomology with a $G_{\breve{K}}$-action. This crucial point is to define the $G_{\breve{K}}$-enrichment of adic \'{e}tale motives over $C$. Let $M$ be an adic \'{e}tale motive over $C$. Then it admits a model $\bar{M}$ over $\breve{K}$; that is, there exists an adic \'{e}tale motive $\bar{M}$ over $\breve{K}$ such that $M \simeq \bar{M}_{C}$. This fact follows from \cite[Theorem 3.7.21]{AGV22} and \cite[Proposition 3.23]{BKV25}. Thus, the identification $M \simeq \bar{M}_{C}$ equips $M$ with a natural $G_{\breve{K}}$-action, and hence its de Rham Fargues-Fontaine cohomology inherits a $G_{\breve{K}}$-action. This gives rise to the Galois-refined realization functor
\[
\rgama{\mathrm{FF}}^{\mathrm{ari}} \colon \rigmot(C) \to \qcoh(\mathbf{FF})^{\htcat G_{\breve{K}}}
\]
of the de Rham-Fargues-Fontaine realization. Here $\qcoh(\mathbf{FF})^{\htcat G_{\breve{K}}}$ is the $\infty$-category of $G_{\breve{K}}$-equivariant solid quasi-coherent sheaves on $\mathbf{FF}$; see \eqref{eq:G-on-qcFF}.

With these enhancements, we obtain a unique comparison equivalence:

\begin{thm}[Theorem \ref{thm:galFF-rig}]
There is a unique monoidal comparison equivalence $\rgama{\mathrm{FF}}^{\mathrm{ari}} \simeq \mathcal{E}_N^{\mathrm{ari}} \circ \rgama{\mathrm{HK}}$ in $\calg(\prstc)$, which is compatible with the monoidal comparison equivalences relating each of these two realization functors to the realization functor of the rigid cohomology on the special fiber.
\end{thm}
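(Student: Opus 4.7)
The plan is to parallel the non-equivariant mapping-space computation leading to Theorem~\ref{thm:introcomp}, but with target $\qcoh(\mathbf{FF})$ replaced by $\qcoh(\mathbf{FF})^{\htcat G_{\breve{K}}}$, and to show that the space of $G_{\breve{K}}$-equivariant monoidal comparisons compatible with the rigid realization is contractible --- thereby obtaining existence and uniqueness in a single stroke.

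First, I would set up the slice-category structure. Both $\rgama{\mathrm{FF}}^{\mathrm{ari}}$ and $\mathcal{E}_N^{\mathrm{ari}} \circ \rgama{\mathrm{HK}}$ come equipped with monoidal equivalences to $\mathcal{E} \circ \rgama{\mathrm{rig}}$ along the Monsky-Washnitzer functor $\xi \colon \agmot(\bar{k}) \to \rigmot(C)$, so they sit inside $\calg(\prstc)_{\agmot(\bar{k})/-}$. Since rigid cohomology is defined on the special fiber over $\bar{k}$, it is canonically $G_{\breve{K}}$-equivariant with trivial action, so this slice structure lifts to $\calg(\prstc)_{\agmot(\bar{k})/-}$ with target in the $\infty$-category of $G_{\breve{K}}$-equivariant solid sheaves.

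Next, I would compute the refined mapping space
\[
\map_{\calg(\prstc)_{\agmot(\bar{k})/-}}\!\bigl(\rigmot(C),\, \qcoh(\mathbf{FF})^{\htcat G_{\breve{K}}}\bigr)
\]
by the same route as in Proposition~\ref{pro:mapsp-functors}. Using $\rigmot(C) \simeq \module{\chi{\mathbbm 1}}(\agmot(\bar{k}))$ and the decomposition $\chi{\mathbbm 1} \simeq {\mathbbm 1} \oplus {\mathbbm 1}(-1)[-1]$, unwinding the data of a monoidal functor out of the module category reduces the problem to computing $G_{\breve{K}}$-fixed points of certain extension groups of vector bundles on $\mathbf{FF}$. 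These are precisely the groups that were nonzero in the non-equivariant computation (and caused Remark~\ref{rmk:monoidal-eq-noncan}), but carry a natural Tate-twist action inherited from the ${\mathbbm 1}(-1)$-summand of $\chi{\mathbbm 1}$.

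The main obstacle will be to correctly identify this $G_{\breve{K}}$-action on the obstruction $\mathrm{Ext}$-groups as the genuine Tate-twist action and to verify the vanishing of its fixed points. Concretely, this should rest on the explicit $G_{\breve{K}}$-action on $H^{*}(\mathbf{FF}, \mathcal{O}_{\mathbf{FF}}(n))$ together with Tate's vanishing $H^{0}(G_{\breve{K}}, \Q_{p}(n)) = 0$ for $n \ne 0$. A secondary technical point is ensuring that the Galois-equivariance of $\rgama{\mathrm{FF}}^{\mathrm{ari}}$, defined via descent of motives $M \simeq \bar{M}_{C}$ to $\breve{K}$, matches the intrinsic Galois-equivariance of $\mathcal{E}_{N}^{\mathrm{ari}} \circ \rgama{\mathrm{HK}}$ at the level of the decomposition of $\chi{\mathbbm 1}$; this compatibility is what legitimizes the Tate-twist identification on the obstruction groups.
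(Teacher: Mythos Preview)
Your overall strategy is correct and matches the paper's: reduce via Proposition~\ref{pro:mapsp-functors} to a mapping-space computation in $\qcoh(\mathbf{FF})^{\htcat G_{\breve{K}}}$, and show that the relevant $\pi_0$ and $\pi_1$ both vanish. The paper carries this out exactly, obtaining
\[
\map_{\calg(\prstc)_{\agmot(\bar{k})/-}}\bigl(\rigmot(C),\, \qcoh(\mathbf{FF})^{\htcat G_{\breve{K}}}\bigr) \simeq \map_{\qcoh(\mathbf{FF})^{\htcat G_{\breve{K}}}}\bigl(\mathcal{O}_{\mathbf{FF}}(-1)[-1],\, \mathcal{O}_{\mathbf{FF}}\bigr),
\]
whose $\pi_1$ is $(B^{G_{\breve{K}}})^{\varphi=p}$, shown to vanish by \cite[Corollaire~10.2.8]{thecurve}.

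However, your identification of the obstruction is off. The $\pi_1$ you need to kill is the $G_{\breve{K}}$-invariants of $B^{\varphi=p} = H^0(\mathbf{FF}, \mathcal{O}_{\mathbf{FF}}(1))$, which is an infinite-dimensional $\Q_p$-Banach space, not $\Q_p(1)$. So ``Tate's vanishing $H^0(G_{\breve{K}}, \Q_p(n)) = 0$'' is not the right input; one needs the genuinely harder statement $(B^{\varphi=p})^{G_{\breve{K}}} = 0$ from Fargues--Fontaine. Relatedly, your description of the Galois action as ``a Tate-twist action inherited from the ${\mathbbm 1}(-1)$-summand of $\chi{\mathbbm 1}$'' is misplaced: the $G_{\breve{K}}$-action lives entirely on the target, coming from the action of $G_{\breve{K}}$ on $A_{\mathrm{inf}}$ and hence on $\mathbf{FF}$ and on $B$. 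There is no Galois action on $\agmot(\bar{k})$ or on the motive ${\mathbbm 1}(-1)$ over $\bar{k}$; the decomposition of $\chi{\mathbbm 1}$ only serves to identify which line bundle appears, not how Galois acts on its cohomology. Once you correct the source of the action and invoke the right vanishing result, your argument goes through as the paper's does.
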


\subsubsection*{A New Filtration for Vector Bundles}

As a consequence of these comparison results, we deduce that the de Rham-Fargues-Fontaine realization factors through the weight complex functor (see \cite{Sos19,Aokwt,BGV25}) on the full subcategory $\rigmot(C)_{\omega}$ of compact motives. In particular, this yields a convergent spectral sequence computing the de Rham-Fargues-Fontaine cohomology of compact motives over $C$:

\begin{cor}[Corollary \ref{cor:fil-vbFF}, Corollary \ref{cor:fil-FFcoh-space}]
  Let $M$ be a compact motive (e.g, associated motives of smooth quasi-compact rigid analytic varieties) over $C$. Define
\[
\mathcal{H}_{\mathrm{FF}}^i(M):= H_{-i} \rgama{\mathrm{\mathrm{FF}}}(M^{\vee}).
\]
  Then there is a convergent (cohomological) spectral sequence starting from the first page and degenerates at the second page:
  \[
E_1^{pq} = \mathcal{H}^q_{\mathrm{FF}}(W^pM) \Rightarrow \mathcal{H}^{p+q}_{\mathrm{FF}}(M)
\]
where $W^{\bullet} M$ is the weight cochain complex of $M$. Moreover, the $i$-th graded piece of the induced finite (increasing) filtration on $\mathcal{H}^n_{\mathrm{FF}}(M)$ has a slope of $(i+n)/2$ and carries a natural $G_{\breve{K}}$-action. In particular, this is a new filtration on the vector bundle $\mathcal{H}_{\mathrm{FF}}^n(M)$ different from the Harder-Narasimhan filtration.
\end{cor}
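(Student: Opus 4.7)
My plan is to transport the weight spectral sequence for the Hyodo-Kato realization across the main comparison theorem and force $E_{2}$-degeneration via purity of Frobenius weights. By Theorem \ref{thm:introcomp} (and Theorem \ref{thm:galFF-rig} for the Galois enrichment), the de Rham-Fargues-Fontaine realization is monoidally equivalent to $\mathcal{E}_{N} \circ \rgama{\mathrm{HK}}^{\varpi}$, so it suffices to prove the statement for this composition. I would then invoke the weight-complex formalism for rigid motives developed in \cite{Sos19,Aokwt,BGV25}, which equips $\rigmot(C)_{\omega}$ with a Chow-type weight structure relative to which the Hyodo-Kato realization is weight-exact, together with a weight-complex functor
\[
W^{\bullet} \colon \rigmot(C)_{\omega} \to \bkch\bigl(\rigmot(C)_{\omega}^{\mathrm{wt}=0}\bigr).
\]
Applying the cohomological functor $\mathcal{H}^{\ast}_{\mathrm{FF}}$ to the filtered object $W^{\bullet} M$ produces, by standard homological algebra, the required $E_{1}$-spectral sequence converging to $\mathcal{H}^{p+q}_{\mathrm{FF}}(M)$; finiteness of $W^{\bullet} M$ for compact $M$ ensures convergence.

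For the $E_{2}$-degeneration I would reason on the Hyodo-Kato side. Each $E_{1}^{p,q}$ is the Fargues-Fontaine realization $\mathcal{E}_{N}$ of the $(\varphi,N)$-module $\mathcal{H}^{q}_{\mathrm{HK}}(W^{p} M)$, and the key claim to establish is that this $(\varphi,N)$-module is pure of weight $q$: since $W^{p} M$ lies in the Chow heart of $\rigmot(C)_{\omega}$, one combines Katz-Messing purity for log-crystalline Frobenius, Mokrane's weight-monodromy filtration in the semistable case, and the alteration-based construction of the weight structure in \cite{BGV25} to obtain this purity uniformly. Strictness of morphisms in the category of mixed $(\varphi,N)$-modules with respect to the weight filtration then forces every differential $d_{r}$ for $r \ge 2$---which would connect pure pieces of incompatible weights $q$ and $q-r+1$---to vanish. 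Transporting this strictness across $\mathcal{E}_{N}$ yields $E_{2}$-degeneration in $\qcoh(\mathbf{FF})$.

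It follows that the induced (increasing) filtration on $\mathcal{H}^{n}_{\mathrm{FF}}(M)$, indexed by $i = -p$, has graded pieces equal to $\mathcal{E}_{N}$-images of pure-weight-$(n+i)$ $(\varphi,N)$-modules; hence each graded piece is a Fargues-Fontaine bundle of slope $(n+i)/2$. Since these slopes increase with $i$ while the Harder-Narasimhan filtration is slope-decreasing, the two filtrations are in general distinct. The $G_{\breve{K}}$-action on each graded piece is obtained by running the entire argument with the Galois-refined realization $\rgama{\mathrm{FF}}^{\mathrm{ari}}$ valued in $\qcoh(\mathbf{FF})^{\htcat G_{\breve{K}}}$ from Theorem \ref{thm:galFF-rig}; the filtration is $G_{\breve{K}}$-stable because it arises from a motivic construction that is intrinsic to $\rigmot(C)_{\omega}$.

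\textbf{The main obstacle} will be the weight-purity assertion for Chow-pure rigid motives under Hyodo-Kato. While this is classical in the proper smooth case (Katz-Messing) and the proper semistable case (Mokrane), the subtlety lies in extending it uniformly across the Chow heart of $\rigmot(C)_{\omega}$ and ensuring compatibility with both the weight-complex functor and the Fargues-Fontaine functor $\mathcal{E}_{N}$. Once this purity is in place, the $E_{2}$-degeneration and the slope identification of the graded pieces reduce to the standard strictness of weight-filtered morphisms of $(\varphi,N)$-modules.
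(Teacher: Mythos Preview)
Your approach is correct and broadly parallels the paper's, but the paper takes a slightly more direct route that dissolves the obstacle you flag. Rather than passing through the full Hyodo--Kato realization and invoking weight-purity for $(\varphi,N)$-modules, the paper uses the structural fact (\cite[Proposition~4.25]{BGV25}) that every object $W_pM$ of the Chow heart of $\rigmot(C)_\omega$ has the form $\xi_C Y_p$ for some algebraic Chow motive $Y_p$ over $\bar{k}$; combined with the comparison $\rgama{\mathrm{FF}}\circ\xi_C\simeq\mathcal{E}\circ\rgama{\mathrm{rig}}$ (Proposition~\ref{pro:drFF-rig}), this reduces everything to purity of \emph{rigid} cohomology of Chow motives over $\bar{k}$, which is \cite[Proposition~4.44]{BGV25}---so your ``main obstacle'' is handled by a single citation, without Katz--Messing, Mokrane, or any semistable-reduction argument. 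The paper then argues $E_2$-degeneration on the Fargues--Fontaine side: Proposition~\ref{pro:pure-isoc-half-slope} converts weight $-q$ into slope $-q/2$, and the vanishing of $\mathrm{Hom}(\mathcal{O}(\lambda),\mathcal{O}(\mu))$ for $\mu<\lambda$ (Proposition~\ref{pro:curve-twist}) kills all higher differentials. Your argument via strictness of the weight filtration on the $(\varphi,N)$-module side is equivalent (since over $\bar{k}$ purity of weight forces semistability of the corresponding slope, and Dieudonn\'e--Manin gives the Hom vanishing), but the paper's phrasing avoids the detour through mixed $(\varphi,N)$-modules altogether.
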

\subsection{The Fargues-Fontaine Cohomology via the D\'{e}calage Functor}

Another version of Fargues-Fontaine cohomology, denoted by $\widetilde{\mathrm{R}\boldsymbol{\Gamma}}_{\mathrm{FF}}$, is defined via the d\'{e}calage functor; see \cite{LB18,Bos23}. Le Bras showed in \cite{LB18} that this cohomology is defined on effective adic \'{e}tale motives.

To compare this version with the de Rham-Fargues-Fontaine cohomology in a motivic way, we first prove that this Fargues-Fontaine cohomology $\widetilde{\mathrm{R}\boldsymbol{\Gamma}}_{\mathrm{FF}}$, defined by Le Bras and Bosco, is also motivic:

\begin{lem}[Proposition \ref{pro:BI-motivic}]
  \label{pro:intro-LBB-motivic}
  The functor sending each smooth dagger space $X$ over $C$ to a solid quasi-coherent sheaf $\widetilde{\mathrm{R}\Gamma}_{\mathrm{FF}}(X)$, defined \cite[Definition 6.16]{Bos23}, extends to a functor
\[
\widetilde{\mathrm{R}\boldsymbol{\Gamma}}_{\mathrm{FF}} \colon \rigmot(C)\simeq \rigmot^{\dagger}(C) \to \qcoh(\mathbf{FF})
\]
in $\calg(\prstc)$, where the first equivalence is given in \cite[Theorem 4.23]{MWrig}.
\end{lem}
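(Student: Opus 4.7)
My strategy is to invoke the universal property of the dagger motivic category $\rigmot^{\dagger}(C)$. Recall from \cite{MWrig} that $\rigmot^{\dagger}(C)$ may be presented as the \'{e}tale-hypersheaf, $\mathbb{B}^{1,\dagger}$-invariant, symmetric monoidal localization of the $\infty$-category of presheaves of spectra on smooth dagger spaces over $C$. Consequently, any symmetric monoidal cocontinuous functor from smooth dagger spaces to an object of $\calg(\prstc)$ that satisfies \'{e}tale hyperdescent and $\mathbb{B}^{1,\dagger}$-invariance extends uniquely to a functor in $\calg(\prstc)$ out of $\rigmot^{\dagger}(C)$; precomposing with the motivic equivalence $\rigmot(C) \simeq \rigmot^{\dagger}(C)$ of \cite[Theorem 4.23]{MWrig} then produces the desired functor. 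It therefore suffices to verify these three properties for Bosco's presheaf $X \mapsto \widetilde{\mathrm{R}\Gamma}_{\mathrm{FF}}(X)$.

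For \'{e}tale hyperdescent, the construction in \cite[Definition 6.16]{Bos23} assembles $\widetilde{\mathrm{R}\Gamma}_{\mathrm{FF}}(X)$ from the pro\'{e}tale cohomology of a d\'{e}calage of period sheaves on $X$; pro\'{e}tale hyperdescent is built into the construction, and its restriction to the small \'{e}tale site yields \'{e}tale hyperdescent, using that $L\eta_t$ commutes with the totalizations arising in \v{C}ech hypercovers of smooth dagger spaces, as already verified in \cite{Bos23}. For $\mathbb{B}^{1,\dagger}$-invariance, I would reduce the claim to the computation $\widetilde{\mathrm{R}\Gamma}_{\mathrm{FF}}(X \times \mathbb{B}^{1,\dagger}) \simeq \widetilde{\mathrm{R}\Gamma}_{\mathrm{FF}}(X)$ via a K\"{u}nneth-type decomposition, and appeal to the overconvergent Poincar\'{e} lemma for the period sheaves entering the d\'{e}calage construction, analogous to the overconvergent de Rham calculation in \cite{LB18}.

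The main obstacle, in my view, is the symmetric monoidal enhancement. The d\'{e}calage functor $L\eta_t$ is only lax symmetric monoidal in general, so upgrading the natural lax structure on $\widetilde{\mathrm{R}\Gamma}_{\mathrm{FF}}$ to a genuine symmetric monoidal one amounts to a K\"{u}nneth isomorphism compatible with $L\eta_t$. I would address this locally on smooth dagger affinoids, where the relevant period rings admit explicit resolutions and the required Tor-vanishing can be verified directly, so that $L\eta_t$ becomes strictly monoidal on the essential image of the Bosco construction. Once this compatibility is established, the extension to $\rigmot^{\dagger}(C) \simeq \rigmot(C)$ in $\calg(\prstc)$ follows formally from the universal property, completing the proof.
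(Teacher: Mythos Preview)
Your overall shape---verify descent, homotopy invariance, and monoidality, then invoke the universal property of $\rigmot^{\dagger}(C)$---is the right template, but the paper executes it by a very different route, and your direct attack on the d\'{e}calage construction has two genuine gaps.

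First, you never check Tate-stability: to extend from the effective category to all of $\rigmot^{\dagger}(C)$ one must show that the image of the Tate motive is $\otimes$-invertible, i.e.\ that $\widetilde{\mathrm{R}\Gamma}_{\mathrm{FF}}(\mathbb{G}_m^{\dagger})$ is a line bundle on $\mathbf{FF}$ shifted in degree one. Le~Bras had already established the effective statement in \cite{LB18}; the new content of the proposition is precisely this invertibility, and it does not follow from descent and $\mathbb{B}^{1,\dagger}$-invariance alone. Second, the monoidality step you flag is not actually resolved in your sketch. Saying that ``Tor-vanishing can be verified directly'' on smooth dagger affinoids so that $L\eta_t$ becomes strict monoidal is the entire difficulty, and I do not see how to carry this out uniformly across all of $\mathbf{RigSm}^{\dagger}/C$ without an auxiliary comparison.

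The paper sidesteps both problems by never working with $L\eta_t$ directly. Its key input is Bosco's local comparison
\[
\rgama{B_I}(\mathfrak{X}^{\mathrm{rig}\dagger}) \;\simeq\; \bigl(\rgama{\mathrm{HK}}(\mathfrak{X}^{\mathrm{rig}\dagger}) \otimes_{\breve{K}}^{\solid} B_{\mathrm{log},I}\bigr)^{N=0}
\]
for semistable weak formal models $\mathfrak{X}$. Since the overconvergent Hyodo--Kato realization is already known to be motivic and monoidal \cite{BKV25}, rig-\'{e}tale descent, $\mathbb{A}^1$-invariance, and the symmetric monoidal structure for $\rgama{B_I}$ are inherited from $\rgama{\mathrm{HK}}$ (after checking that $(-\otimes^{\solid}_{\breve{K}} B_{\mathrm{log},I})^{N=0}$ commutes with the relevant finite limits, which uses finiteness of HK cohomology). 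Tate-stability then reduces to the explicit computation $H^1_{B_I}(\mathbb{G}_m^{\dagger}) \simeq B_I$. One finally glues the $\rgama{B_I}$ over $I$ and passes to Frobenius-fixed points to obtain $\widetilde{\mathrm{R}\boldsymbol{\Gamma}}_{\mathrm{FF}}$. In short: rather than taming $L\eta_t$, the paper transports the problem to Hyodo--Kato cohomology, where everything is already proved.
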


This motivic extension enables a comparison between the two versions of Fargues–Fontaine cohomology via their respective relations to Hyodo–Kato cohomology:

\begin{pro}[Proposition \ref{pro:comp-FFcoh}]
There is a (non-canonical) monoidal equivalence $\rgama{\mathrm{FF}} \simeq \widetilde{\mathrm{R}\boldsymbol{\Gamma}}_{\mathrm{FF}} $ in $\calg(\prstc)$.
\end{pro}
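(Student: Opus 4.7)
The strategy is to reduce to the mapping space analysis used in the proof of Theorem~\ref{thm:introcomp}, by showing that $\widetilde{\mathrm{R}\boldsymbol{\Gamma}}_{\mathrm{FF}}$ admits a comparison with the rigid realization on the image of the Monsky-Washnitzer functor $\xi \colon \agmot(\bar{k}) \to \rigmot(C)$, so that it lies in the same slice category as $\rgama{\mathrm{FF}}$. Then the equivalence will follow from the same Ext computation that produced the comparison with Hyodo-Kato.

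\textbf{Step 1.} First I would upgrade Lemma~\ref{pro:intro-LBB-motivic} by establishing a monoidal comparison
\[
\widetilde{\mathrm{R}\boldsymbol{\Gamma}}_{\mathrm{FF}} \circ \xi \simeq \mathcal{E} \circ \rgama{\mathrm{rig}}
\]
in $\calg(\prstc)$, mirroring Proposition~\ref{pro:drFF-rig} for the de Rham version. Pointwise this is essentially what Le Bras's and Bosco's constructions are designed to compute: for a smooth affine $X_{0}/\bar{k}$ with smooth formal lift $\mathfrak{X}/\mathcal{O}_{C}$, one has $\xi(X_{0}) \simeq \mathfrak{X}^{\dagger}_{C}$, and the décalage-based Fargues-Fontaine cohomology of \cite{LB18,Bos23} recovers the Fargues-Fontaine vector bundle $\mathcal{E}\!\left( \rgama{\mathrm{crys}}(X_{0})[1/p], \varphi \right)$ associated to rigid cohomology with its Frobenius. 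The motivic enhancement provided by Lemma~\ref{pro:intro-LBB-motivic} then promotes this pointwise identification to an equivalence of symmetric monoidal functors on $\agmot(\bar{k})$.

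\textbf{Step 2.} Once both $\rgama{\mathrm{FF}}$ and $\widetilde{\mathrm{R}\boldsymbol{\Gamma}}_{\mathrm{FF}}$ are identified with $\mathcal{E} \circ \rgama{\mathrm{rig}}$ after restriction along $\xi$, each lifts to an object of the slice $\calg(\prstc)_{\agmot(\bar{k})/-}$. Applying Proposition~\ref{pro:mapsp-functors} to the refined mapping space
\[
\map_{\calg(\prstc)_{\agmot(\bar{k})/-}} \left( \rigmot(C), \qcoh(\mathbf{FF}) \right),
\]
whose set of connected components is controlled by Ext-groups of vector bundles on $\mathbf{FF}$, then yields the desired monoidal equivalence $\rgama{\mathrm{FF}} \simeq \widetilde{\mathrm{R}\boldsymbol{\Gamma}}_{\mathrm{FF}}$. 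As already noted in Remark~\ref{rmk:monoidal-eq-noncan}, such an equivalence is non-canonical, in agreement with the statement.

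\textbf{Main obstacle.} The delicate point is Step~1. While the pointwise comparison on lifts of smooth affine varieties is built into the constructions of Le Bras and Bosco, upgrading it to a symmetric monoidal natural transformation requires checking that the décalage construction is compatible with the identification $\rigmot(C) \simeq \module{\chi{\mathbbm 1}}(\agmot(\bar{k}))$, in particular with the cohomological motive $\chi {\mathbbm 1}$ of $\mathbb{G}_{m}$ determined by the chosen pseudo-uniformizer $\varpi$. Once this compatibility is in hand, no ingredient beyond what already appears in the proof of Theorem~\ref{thm:introcomp} is required, and the comparison can be extracted from the same mapping-space vanishing.
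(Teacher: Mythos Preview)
Your approach is correct and would work, but the paper takes a shorter route. Rather than establishing $\widetilde{\mathrm{R}\boldsymbol{\Gamma}}_{\mathrm{FF}} \circ \xi \simeq \mathcal{E} \circ \rgama{\mathrm{rig}}$ directly and then re-running the mapping-space argument, the paper simply quotes Bosco's comparison \cite[Theorem 6.17]{Bos23}, which already gives a monoidal equivalence $\widetilde{\mathrm{R}\boldsymbol{\Gamma}}_{\mathrm{FF}} \simeq \mathcal{E}_N \circ \rgama{\mathrm{HK}}$ on all of $\rigmot(C)$, and then chains it with the already-proven Theorem~\ref{thm:FFHK-comp} ($\rgama{\mathrm{FF}} \simeq \mathcal{E}_N \circ \rgama{\mathrm{HK}}$). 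Your Step~1 is therefore subsumed by Bosco's result together with Proposition~\ref{pro:HK-comp-rig}: restricting Bosco's equivalence along $\xi$ and using $\pi \circ \rgama{\mathrm{HK}} \circ \xi \simeq \rgama{\mathrm{rig}}$ immediately yields the comparison with rigid cohomology you were aiming for, and the ``main obstacle'' you flag (monoidal compatibility with $\chi{\mathbbm 1}$) is dissolved rather than confronted. Your approach has the virtue of being self-contained and not relying on the full strength of Bosco's Hyodo-Kato comparison, but at the cost of redoing work that the paper can import.
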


{%
\paragraph{Notation and conventions.}
In this paper, we let $K$ be a complete discrete valued field over $\Q_{p}$ with a perfect residue field $k$. We fix an algebraic closure $\bar{K}$ of $K$ and let $C$ denote its completion, whose residue field is denoted by $\bar{k}$. As usual, we let $C^{\flat}$ denote the tilting of $C$, and fix a pseudo-uniformizer $p^{\flat}$ such that its first coordinate is $p$, i.e., $p^{\flat,\sharp}=p$. Let $K_{0}:=\frc W(k)$ and $\breve{K}:= \frc W(\bar{k})$.

For $\infty$-categories, we will follow notions in \cite{HTT,HA}. More precisely, we will let $\prl$ denote the $\infty$-category of presentable $\infty$-categories and left adjoint functors, and its subcategory of compactly generated $\infty$-categories with compact-preserving functors will be denoted by $\prlc$. There are two other variants: subcategories $\prst$ (resp. $\prstc$) of $\prl$ (resp. $\prlc$) of those also being stable. These categories underlie a symmetric monoidal structure, and their commutative algebra objects will be denoted by $\calg(\prl)$ (resp. $\calg(\prlc)$, $\calg(\prst)$, and $\calg(\prstc)$), whose objects are presentable (resp. compactly generated, stable presentable, stable compactly generated) symmetric monoidal $\infty$-categories for which the tensor products preserve small colimits in each variable. Finally, we will denote by $\mathrm{map}_{\mathcal{C}}(X,Y)$ or simply $\mathrm{map}(X,Y)$ the mapping spectrum from $X$ to $Y$ in an $\infty$-category $\mathcal{C}$.

All the motives are \'{e}tale motives with rational coefficients. Throughout the main text, we adopt the homological convention exclusively.
}%

{%
\paragraph{Acknowledgments.}%
The author would like to sincerely thank his advisor, Alberto Vezzani, for suggesting the topic of this paper and for many helpful conversations throughout the development of the work. He also wishes to thank Arthur-César Le Bras for valuable discussions during the author's visit, which helped clarify parts of the exposition concerning the Fargues-Fontaine curve.
}

\section{Adic \'{E}tale Motives and the Fargues-Fontaine Curve}
\label{sec:mot-FF}
In this section, we recall the theory of adic \'{e}tale motives and the Fargues-Fontaine curve from \cite{AGV22,LBV23}. In \S \ref{subsec:rigmot}, we review how adic \'{e}tale motives on the generic fiber can be related to motives on the special fiber, a perspective that plays a crucial role in the proof of the main comparison theorem. In \S \ref{subsec:FFmot}, we recall from \cite{LBV23} the spreading-out results for motives on the Fargues-Fontaine curve and compare the spreading behavior at $0$ and at $\infty$. Finally, we recollect the theory of solid quasi-coherent sheaves on the Fargues-Fontaine curve, with particular emphasis on their relationship with $\varphi$-modules over $\bar{k}$.
\subsection{Adic \'{E}tale Motives}
\label{subsec:rigmot}

Let $S$ be an admissible adic space (or more generally a rigid analytic space) over $\spa(\Z_p)$, as defined in \cite[Definition 2.1]{LBV23}. Following \cite{Vez19,LBV23}, the $\infty$-category of \'{e}tale sheaves with $\Q$-coefficients $\shv \ets(\mathbf{RigSm}/S, \dcat(\Q))$ gives rise to the $\infty$-category of \myemph{adic \'{e}tale motives} over $S$, denoted by $\rigmot(S):= \mathbf{RigSH}\ets(S, \Q)$ in the conventions of \cite{AGV22}. It is a compactly generated symmetric monoidal $\infty$-category admitting the formalism of six operators. We refer to \cite[\S 2]{LBV23} and \cite{AGV22} for further details.

Now assume $S$ is the rigid generic fiber of a formal scheme $\mathfrak{S}$. Taking special fibers and rigid generic fibers induces two functors between motives:
\[
(-)_{\sigma} \colon \fagmot(\mathfrak{S}) \to \agmot(\mathfrak{S}_{\sigma}), \quad (-)_{\eta} \colon \fagmot(\mathfrak{S}) \to \rigmot(S),
\]
where $\mathfrak{S}_{\sigma}$ denotes the special fiber of $\mathfrak{S}$, and $\fagmot(\mathfrak{S})$, $\agmot(\mathfrak{S}_{\sigma})$ are $\infty$-categories of \myemph{formal motives} over $\mathfrak{S}$, and \myemph{algebraic motives} over $\mathfrak{S}_{\sigma}$, respectively. These are constructed analogously to $\rigmot(S)$. In fact, the first functor above is an equivalence of $\infty$-categories by \cite[Theorem 3.1.10]{AGV22}. This yields a functor, called the \myemph{Monsky-Washnitzer functor},
\begin{equation}
  \label{eq:MWfunctor}
  \xi \colon \agmot(\mathfrak{S}_{\sigma}) \simeq \fagmot(\mathfrak{S}) \xrightarrow{(-)_{\eta}} \rigmot(S)
\end{equation}
which provides a powerful new link between rigid analytic and algebraic geometry, thanks to deep results from \cite{AGV22}:

\begin{pro}
  \label{pro:chi-unit-formula}
  Let $S$ and $\mathfrak{S}$ be as above.
  \begin{enumerate}
  \item The Monsky-Washnitzer functor \eqref{eq:MWfunctor} admits a right adjoint $\chi \colon \rigmot(S) \to \agmot(\mathfrak{S}_{\sigma})$ and induces a fully faithful functor
    \[
\tilde{\xi} \colon \module{\chi {\mathbbm 1}}(\agmot(\mathfrak{S}_{\sigma})) \to \rigmot(S),
\]
where ${\mathbbm 1}$ is the tensor unit of $\rigmot(S)$ and its image $\chi {\mathbbm 1}$ is a commutative algebra object in $\agmot(\mathfrak{S}_{\sigma})$.
    
  \item Suppose $S=\spa(K)$ and $\mathfrak{S}= \spf(\mathcal{O}_K)$. A choice of a pseudo-uniformizer yields an identification $\chi {\mathbbm 1} \simeq {\mathbbm 1}\bigoplus {\mathbbm 1}(-1)[-1]$, where ${\mathbbm 1}(-1)$ is the $(-1)$-Tate twist.
\end{enumerate}
\end{pro}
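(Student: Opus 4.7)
The plan is to handle part (1) via formal $\infty$-categorical considerations in $\prlc$ combined with the deeper input from \cite[Theorem 3.7.21]{AGV22}, and to derive part (2) from a direct geometric computation involving $\mathbb{G}_m$.

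For the right adjoint in part (1), I would write $\xi$ as the composition of the equivalence $(-)_{\sigma}^{-1}$ from \cite[Theorem 3.1.10]{AGV22} with the generic fiber functor $(-)_{\eta}$. Both are colimit-preserving symmetric monoidal functors in $\prlc$, so the adjoint functor theorem produces a right adjoint $\chi$, which is automatically lax symmetric monoidal; in particular $\chi {\mathbbm 1}$ acquires a canonical commutative algebra structure in $\agmot(\mathfrak{S}_{\sigma})$. The right adjoint $\chi$ then factors naturally through $\module{\chi {\mathbbm 1}}(\agmot(\mathfrak{S}_{\sigma}))$, since every object of the form $\chi(N)$ inherits a $\chi {\mathbbm 1}$-module structure from the lax structure; the left adjoint of this factorization is the functor $\tilde{\xi}$ in the statement, described explicitly by the extension of scalars $\tilde{\xi}(M) = \xi(M) \otimes_{\xi(\chi {\mathbbm 1})} {\mathbbm 1}$ along the counit $\xi(\chi{\mathbbm 1}) \to {\mathbbm 1}$.

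Fully faithfulness of $\tilde{\xi}$ is the nontrivial heart of part (1). By standard Barr-Beck considerations in $\prstc$, it is equivalent to the projection formula $\chi(\xi(M) \otimes N) \simeq M \otimes \chi(N)$ for $M \in \agmot(\mathfrak{S}_{\sigma})$ and $N \in \rigmot(S)$, which is the content of \cite[Theorem 3.7.21]{AGV22}. In principle this would be verified by reducing to compact generators (motives of smooth formal, resp. rigid, schemes) and checking the formula there; the reduction uses compact generation of both sides and the continuity of $\chi$, which in turn follows from $\xi$ preserving compact objects.

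For part (2), with $S = \spa(K)$ and $\mathfrak{S} = \spf \mathcal{O}_K$, the special fiber is $\spec k$ and the plan is to identify $\chi {\mathbbm 1}$ with the cohomological motive of $\mathbb{G}_{m,k}$ in $\agmot(\spec k)$. The pseudo-uniformizer $\varpi$ enters through a trivialization of the motive of a suitable formal annulus over $\mathcal{O}_K$, whose generic fiber matches a rigid analytic $\mathbb{G}_m$; the Monsky-Washnitzer functor then identifies $\xi$ applied to the motive of $\mathbb{G}_{m,k}$ with a motive differing from the tensor unit in $\rigmot(K)$ by a computable factor. The classical Gysin-type decomposition $M^{\vee}(\mathbb{G}_{m,k}) \simeq {\mathbbm 1} \oplus {\mathbbm 1}(-1)[-1]$, arising from the closed immersion $\{0\} \hookrightarrow \mathbb{A}^1_{k}$ together with $\mathbb{A}^1$-invariance, then delivers the desired splitting; this is carried out in \cite[Theorem 3.3.3]{AGV22}. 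The principal obstacle is the projection formula underlying part (1), which is not purely formal and relies on the compact generation, continuity, and semi-separatedness properties for the generic fiber functor established throughout \cite{AGV22}; once it is in hand, part (2) follows by tracing $\chi$ through the decomposition of $M^{\vee}(\mathbb{G}_m)$.
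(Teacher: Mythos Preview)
Your approach is essentially correct and matches the paper's, which simply cites \cite[Theorem 3.3.3, Theorem 3.7.21, Corollary 3.8.32]{AGV22} without further elaboration. One point worth correcting: you have swapped the roles of the two AGV22 theorems. In \cite{AGV22}, Theorem 3.3.3 is the statement that $\tilde{\xi}$ is fully faithful (the projection formula / monadicity input for part (1)), while Theorem 3.7.21 is the stronger assertion that $\tilde{\xi}$ is an equivalence when the base field is algebraically closed; the explicit formula $\chi{\mathbbm 1} \simeq {\mathbbm 1} \oplus {\mathbbm 1}(-1)[-1]$ in part (2) is Corollary 3.8.32 there, not Theorem 3.3.3. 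Your sketch of the underlying arguments (adjoint functor theorem, lax monoidality of $\chi$, Barr--Beck reduction to a projection formula, and the $\mathbb{G}_m$-computation via a pseudo-uniformizer) is accurate and in fact more detailed than what the paper records.
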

\begin{proof}
These are \cite[Theorem 3.3.3, Theorem 3.7.21, Corollary 3.8.32]{AGV22}.
\end{proof}

From now on, if $S$ is the spectrum of a complete non-archimedean field $K$, then we write $\rigmot(K)$ for $\rigmot(\spa(K))$. Similar notations apply to algebraic and formal motives when the base is affine.

\begin{df}
  \label{df:motgr}
Define $\rigmot_{\mathrm{gr}}(S)$ as the full subcategory of $\rigmot(S)$ spanned by the essential image of the fully faithful functor in Proposition~\ref{pro:chi-unit-formula}. We call it the $\infty$-category of \myemph{adic \'{e}tale motives with good models}.
\end{df}

\begin{rmk}
  \label{rmk:grmot}
\begin{enumerate}
\item The full subcategory $\rigmot_{\mathrm{gr}}(S)$ is also given by the full subcategory of $\rigmot(S)$ generated under small colimits by those motives of the form $M(\mathfrak{X}_{\eta})$, where $\mathfrak{X}$ runs through smooth $\mathfrak{S}$-formal schemes. However, it includes not only smooth $S$-admissible adic spaces with good reduction but also those with semi-stable reduction; see \cite[Proposition 3.29]{BKV25}.

\item In light of Proposition \ref{pro:chi-unit-formula}, we have an equivalence $\module{\chi {\mathbbm 1}}(\agmot(\mathfrak{S}_{\sigma})) \simeq \rigmot_{\mathrm{gr}}(S)$. In the special case where $S$ is associated with a complete algebraically closed non-archimedean field $K$ with residue field $k$, the inclusion of this subcategory category into $\rigmot(K)$ is itself an equivalence of $\infty$-categories; see \cite[Theorem 3.7.21]{AGV22}. Consequently, we obtain canonical equivalences
\[
\module{\chi {\mathbbm 1}}(\agmot(k)) \simeq \rigmot_{\mathrm{gr}}(K) \simeq  \rigmot(K).
\]
These identifications play a central role in establishing the forthcoming comparison results.
\end{enumerate}  
\end{rmk}

\subsection{Rigid Motives on the Fargues-Fontaine Curve}
\label{subsec:FFmot}
We now recall the spreading-out result for motives on the Fargues-Fontaine curve from \cite[\S 5]{LBV23}, which will yield a ``pullback'' functor $\rigmot(C^{\flat}) \to \rigmot(\mathrm{FF})$. Here $\mathbf{FF}:= \mathbf{FF}_{C^{\flat}, \Q_p}$ denotes the absolute adic Fargues-Fontaine curve over $\Q_p$. More precisely, it is the quotient of the Fargues-Fontaine's space $\mathcal{Y}_{(0,\infty)}:= \spa(A_{\mathrm{inf}}, A_{\mathrm{inf}})\setminus V(p[p^{\flat}])$ by the Frobenius action:
\[
\mathbf{FF}= \mathcal{Y}_{(0,\infty)}/\varphi^{\Z}.
\]

We next recall some relevant notations. There are two distinguished points $x_{0}$ and $x_{\infty}$ on $\mathbf{FF}$ corresponding to $C^{\flat}$ and $\breve{K}$, respectively. Let
\begin{align*}
  \mathcal{Y}_0&:= \mathcal{Y}_{[0,\infty)}=\spa(A_{\mathrm{inf}},A_{\mathrm{inf}})\setminus V([p^{\flat}])\\
  \mathcal{Y}_{\infty}&:=\mathcal{Y}_{(0,\infty]}= \spa(A_{\mathrm{inf}},A_{\mathrm{inf}})\setminus V(p),
\end{align*}
so that for $\star \in \{0,\infty\}$, $\mathcal{Y}_{\star}$ is an open neighborhood of $x_{\star}$. We adopt the following Frobenius conventions: for $x_0$, $\mathcal{Y}_{0}$ and $\mathcal{Y}_{(0,\infty)}$, we use the usual Frobenius endomorphism $\varphi_{0}= \varphi$; for $x_{\infty}$ and $\mathcal{Y}_{\infty}$, we set $\varphi_{\infty}= \varphi^{-1}$. Then, for $S \in \{x_0,x_{\infty}, \mathcal{Y}_0, \mathcal{Y}_{\infty}, \mathcal{Y}_{(0,\infty)}\}$, with the above Frobenius convention, we denote by $\rigmot(S)^{\varphi}$ (resp. $\rigmot(S)^{\varphi_{\omega}}$) the equalizer of $\mathrm{Id}$ and $\varphi^{*}$ in $\calg(\prl)$ (resp. $\calg(\prlc)$).

Using the semi-separatedness property of motives together with the Frobenius trick on the Fargues-Fontaine curve, one then obtains the following spreading-out results from either $x_0$ or $x_{\infty}$:

\begin{pro}
  \label{pro:sp-out-curve}
  Let $\star \in \{0,\infty\}$ and $r \in \Q_{>0}$.
  \begin{enumerate}
  \item The pullbacks along the closed embeddings $x_{\star} \hookrightarrow \mathcal{Y}_{\star}$ induce equivalences in $\calg(\prlc)$:
  \[
\rigmot(\mathcal{Y}_{\star})^{\varphi_{\omega}} \xrightarrow{\simeq} \rigmot(x_{\star})^{\varphi_{\omega}}.
\]

\item The pullback defines an equivalence in $\calg(\prlc)$
  \[
\rigmot(\mathbf{FF}) \simeq \rigmot(\mathcal{Y}_{(0,\infty)})^{\varphi} \simeq \rigmot(\mathcal{Y}_{(0,\infty)})^{\varphi_{\omega}}.
\]
\end{enumerate}
\end{pro}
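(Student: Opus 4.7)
The proof splits naturally into the two parts, both resting on the same geometric principle: the Frobenius $\varphi$ acts properly discontinuously on $\mathcal{Y}_{(0,\infty)}$, and inside each half-open annulus $\mathcal{Y}_\star$ an appropriate power-iterate of the corresponding Frobenius $\varphi_\star$ contracts a tower of quasi-compact open neighborhoods onto the boundary point $x_\star$.

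For part (1), the plan is to exhibit, for each $\star \in \{0,\infty\}$, a decreasing sequence of quasi-compact rational open neighborhoods $\{U_n\}_{n \geq 0}$ of $x_\star$ in $\mathcal{Y}_\star$ with $\bigcap_n U_n = x_\star$ and satisfying $\varphi_\star(U_n) \subset U_{n+1}$, so that $\varphi_\star$ shifts the index of the tower. Concretely such a tower can be cut out by inequalities between the norms of $p$ and $[p^\flat]$. By the continuity of rigid motives---which sends cofiltered limits of rigid analytic spaces along quasi-compact open immersions to the corresponding limit in $\calg(\prlc)$---one obtains presentations
\[
\rigmot(\mathcal{Y}_\star) \simeq \lim_n \rigmot(U_n), \quad \rigmot(x_\star) \simeq \lim_n \rigmot(U_n),
\]
in $\calg(\prlc)$, where the second uses that $\{U_n\}$ is cofinal among affinoid neighborhoods of $x_\star$. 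Taking $\varphi_\star$-invariants, the index-shift induced by Frobenius becomes an internal self-equivalence of the tower, so the two homotopy limits collapse onto a common stable value; the pullback along $x_\star \hookrightarrow \mathcal{Y}_\star$ realizes this equivalence. The semi-separatedness property of rigid motives from \cite[\S 2]{AGV22} enters to certify that the relevant Frobenius-twisted restriction functors are indeed equivalences in $\calg(\prlc)$.

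For part (2), the plan is to invoke étale hyperdescent for rigid motives applied to the covering $\pi : \mathcal{Y}_{(0,\infty)} \to \mathbf{FF}$, which realizes $\mathbf{FF}$ as the quotient by the free and discrete $\varphi^{\Z}$-action. Since rigid motives satisfy étale hyperdescent and $\pi$ is a torsor under the discrete group $\Z$, the associated Čech descent diagram identifies $\rigmot(\mathbf{FF})$ with the homotopy equalizer of $\mathrm{id}$ and $\varphi^{*}$ on $\rigmot(\mathcal{Y}_{(0,\infty)})$, yielding the first equivalence. The second equivalence $\rigmot(\mathcal{Y}_{(0,\infty)})^{\varphi} \simeq \rigmot(\mathcal{Y}_{(0,\infty)})^{\varphi_\omega}$ then follows from the fact that $\varphi$ is an isomorphism of adic spaces, so $\varphi^{*}$ preserves compact objects and compact generation; consequently the equalizer computed in $\calg(\prl)$ coincides with the one computed in $\calg(\prlc)$.

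The main obstacle will be in part (1): constructing the Frobenius-compatible shrinking neighborhood basis with explicit control over the defining inequalities, and verifying rigorously that $\varphi_\star$-equivariance forces the tower to collapse to its stalk at $x_\star$. For the case $\star = \infty$, the characteristic zero nature of the boundary point introduces an additional subtlety; the most natural route is to reduce to the characteristic $p$ situation through the tilting equivalence of rigid motives \cite{Vez19a}, after which the semi-separatedness argument applies directly. By contrast, part (2) is comparatively formal once the étale hyperdescent machinery for $\rigmot$ is in hand.
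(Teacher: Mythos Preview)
Your outline for part (2) is essentially correct and matches the argument of \cite[Proposition 5.3]{LBV23}: the quotient $\mathcal{Y}_{(0,\infty)} \to \mathbf{FF}$ is an \'etale $\Z$-torsor, descent gives the first equivalence, and the second follows because $\varphi^{*}$ is an equivalence preserving compact objects.

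Part (1), however, has a genuine gap. Your displayed identity
\[
\rigmot(\mathcal{Y}_\star) \simeq \lim_n \rigmot(U_n)
\]
for a \emph{decreasing} tower $U_n$ shrinking to $x_\star$ is false. The right-hand side is a germ-type object near $x_\star$, whereas $\rigmot(\mathcal{Y}_\star)$ contains motives supported far from $x_\star$: any nonzero motive supported in $\mathcal{Y}_\star \setminus U_0$ restricts to zero in every $\rigmot(U_n)$ but is nonzero on $\mathcal{Y}_\star$. Continuity does not rescue this either, since continuity results for $\rigmot$ express the category over a cofiltered \emph{limit} of spaces as a \emph{colimit} of categories in $\prl$, and $\mathcal{Y}_\star$ is in no sense the limit of the $U_n$. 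Your second identity $\rigmot(x_\star) \simeq \lim_n \rigmot(U_n)$ is likewise not an instance of the continuity statement you quote, because $x_\star$ is a closed subspace and the transition maps are open immersions in the wrong direction.

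The argument in \cite{LBV23} runs the Frobenius the other way first. Since $\varphi_\star$ contracts toward $x_\star$, the opens $\varphi_\star^{-n}(U_0)$ form an \emph{increasing} cover of $\mathcal{Y}_\star$, and it is for this cover that analytic descent yields a limit presentation of $\rigmot(\mathcal{Y}_\star)$; the Frobenius shifts this increasing tower and the equalizer telescopes. Passing from a fixed quasi-compact neighborhood down to the closed point $x_\star$ is then a separate step. Finally, your tilting suggestion for $\star = \infty$ is unnecessary: $\varphi_\infty = \varphi^{-1}$ is already an automorphism of the adic space $\mathcal{Y}_\infty$, so the contraction argument is symmetric in $\star$, which is exactly why the paper can simply say that the proof of \cite[Proposition 5.3]{LBV23} ``works in both cases''.
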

\begin{proof}
The proof of \cite[Proposition 5.3]{LBV23} works in both cases.
\end{proof}

To define the spreading-out functor, we require a Frobenius enrichment functor. Since $\spa(C^{\flat})$ is of characteristic $p>0$, there is a functor
\[
\rigmot(C^{\flat}) \to \rigmot(C^{\flat})^{\varphi_{\omega}}
\]
defined by the relative Frobenius (see \cite[Corollary 2.26]{LBV23}, \cite[Corollary 2.9.11]{AGV22}). Now using Proposition \ref{pro:sp-out-curve} and the pullback $\rigmot(\mathcal{Y}_{[0,\infty)}) \to \rigmot(\mathcal{Y}_{(0,\infty)})$, we get a functor
\begin{equation}
  \label{eq:spreading-0}
    \mathcal{D}_0 \colon \rigmot(C^{\flat}) \to \rigmot(\mathbf{FF})
\end{equation}
defined by
\[
\begin{tikzcd}
\rigmot(C^{\flat}) \arrow[r, "\mathcal{D}_0"] \arrow[d]                & \rigmot(\mathbf{FF}) \arrow[d, "\simeq"]                        \\
\rigmot(C^{\flat})^{\varphi_{\omega}} \arrow[d, "\simeq"']             & {\rigmot(\mathcal{Y}_{(0,\infty)})^{\varphi}}                   \\
{\rigmot(\mathcal{Y}_{[0,\infty)})^{\varphi_{\omega}}} \arrow[r, hook] & {\rigmot(\mathcal{Y}_{[0,\infty)})^{\varphi}} \arrow[u, "j^*"']
\end{tikzcd}
\]
in $\calg(\prlc)$. We will refer to it as the \myemph{spreading-out functor} from $0$.

On the other hand, we can also spread out from $x_{\infty}$. However, a crucial ingredient in defining the spreading-out functor from $x_0$ is the Frobenius enrichment, which arises naturally in characteristic $p>0$. In characteristic $0$, we instead employ an alternative method to obtain a Frobenius equivariant enrichment of motives, intended specifically for comparison purposes.

In fact, for algebraic motives, we also have the Frobenius enrichment:
\begin{equation}
  \label{eq:rel-frob}
 \varphi_{\mathrm{rel}} \colon \agmot(\bar{k}) \to \agmot(\bar{k})^{\varphi_{\omega}}
\end{equation}
Here, as before, $\agmot(\bar{k})^{\varphi_{\omega}}$ denotes the equalizer of $\mathrm{Id}$ and the Frobenius pullback, and this enrichment is defined via the relative Frobenius (\cite[\href{https://stacks.math.columbia.edu/tag/0CC6}{Section 0CC6}]{stacks-project}), see also \cite[Remark 4.28]{BGV25}, \cite[Proposition 6.3.16]{CD16}, and \cite[Theorem 2.9.7]{AGV22}. This construction remains valid when replacing $\bar{k}$ by any scheme of characteristic $p>0$.

Therefore, composed by the Monsky-Washinitzer functor (\ref{eq:MWfunctor}), we obtain a functor
\[
\agmot(\bar{k}) \xrightarrow{\varphi_{\mathrm{rel}}} \agmot(\bar{k})^{\varphi_{\omega}} \xrightarrow{\xi_{\breve{K}}} \rigmot(\breve{K})^{\varphi_{\omega}}.
\]
Using Proposition \ref{pro:sp-out-curve} and imitating the construction of $\mathcal{D}_{0}$, we get a functor
\[
\bar{\mathcal{D}}_{\infty} \colon \agmot(\bar{k}) \to \rigmot(\breve{K})^{\varphi_{\omega}} \to \rigmot(\mathbf{FF})
\]
in $\calg(\prlc)$. These allow us to compare spreading out from $0$ and $\infty$:

\begin{pro}
  \label{pro:comp-spread-0-inf}
  We have a monoidal equivalence $ \mathcal{D}_0 \circ \xi_{C^{\flat}} \simeq \bar{\mathcal{D}}_{\infty}$ in $\calg(\prlc)$.
\end{pro}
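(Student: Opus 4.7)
The strategy is to construct both compositions as restrictions of a single Frobenius-equivariant Monsky--Washnitzer functor attached to the formal scheme $\mathfrak{A} := \spf A_{\mathrm{inf}}$ endowed with the $(p,[p^{\flat}])$-adic topology. Since $A_{\mathrm{inf}}/(p,[p^{\flat}]) = \bar k$ and the adic generic fiber of $\mathfrak{A}$ is $\mathcal{Y}_{(0,\infty)}$, Proposition \ref{pro:chi-unit-formula} (together with \cite[Theorem 3.1.10]{AGV22} applied to $\mathfrak{A}$) yields a Monsky--Washnitzer functor $\xi_{\mathfrak{A}} \colon \agmot(\bar k) \to \rigmot(\mathcal{Y}_{(0,\infty)})$ in $\calg(\prlc)$. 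The Witt-vector Frobenius on $A_{\mathrm{inf}}$ is a Frobenius lift of the absolute Frobenius of $\bar k$, so by functoriality of $(-)_{\eta}$ with respect to Frobenius the functor $\xi_{\mathfrak{A}}$ intertwines $\varphi_{\mathrm{rel}}$ on the source with $\varphi^{*}$ on the target. This produces a Frobenius-equivariant enhancement
\[
\xi_{\mathfrak{A}}^{\varphi} \colon \agmot(\bar k) \xrightarrow{\varphi_{\mathrm{rel}}} \agmot(\bar k)^{\varphi_\omega} \xrightarrow{\xi_{\mathfrak{A}}} \rigmot(\mathcal{Y}_{(0,\infty)})^{\varphi_\omega} \xrightarrow{\simeq} \rigmot(\mathbf{FF}),
\]
where the last identification is Proposition \ref{pro:sp-out-curve}(2).

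Next, I would identify $\xi_{\mathfrak{A}}^{\varphi}$ with both $\mathcal{D}_0 \circ \xi_{C^{\flat}}$ and $\bar{\mathcal{D}}_\infty$ by factoring $\mathfrak{A}$ through the two boundary formal schemes. The projection $A_{\mathrm{inf}} = W(\mathcal{O}_{C^{\flat}}) \twoheadrightarrow \mathcal{O}_{C^{\flat}}$ onto the zeroth Witt coordinate induces a closed immersion $\spf \mathcal{O}_{C^{\flat}} \hookrightarrow \mathfrak{A}$ whose generic fiber is $x_0 \hookrightarrow \mathcal{Y}_{[0,\infty)}$; in turn, $\mathcal{Y}_{[0,\infty)}$ contains $\mathcal{Y}_{(0,\infty)}$ as an open subspace. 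Naturality of $\xi$ in the formal scheme, combined with the spreading-out equivalence $\rigmot(C^{\flat})^{\varphi_\omega} \simeq \rigmot(\mathcal{Y}_{[0,\infty)})^{\varphi_\omega}$ of Proposition \ref{pro:sp-out-curve}(1) and restriction along $\mathcal{Y}_{(0,\infty)} \hookrightarrow \mathcal{Y}_{[0,\infty)}$, identifies $\xi_{\mathfrak{A}}^{\varphi}$ with $\mathcal{D}_0 \circ \xi_{C^{\flat}}$. A symmetric argument using the residue map $A_{\mathrm{inf}} = W(\mathcal{O}_{C^{\flat}}) \twoheadrightarrow W(\bar k) = \mathcal{O}_{\breve K}$ and the point $x_\infty \hookrightarrow \mathcal{Y}_{(0,\infty]}$ identifies $\xi_{\mathfrak{A}}^{\varphi}$ with $\bar{\mathcal{D}}_\infty$.

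The main obstacle is to reconcile the two different placements of the Frobenius enrichment. On the $\mathcal{D}_0$ side the enrichment occurs on $\rigmot(C^{\flat})$ using the absolute Frobenius of the characteristic $p$ base, whereas on the $\bar{\mathcal{D}}_\infty$ side it is inserted upstream on $\agmot(\bar k)$ via $\varphi_{\mathrm{rel}}$ and then transported across $\xi_{\breve K}$ into characteristic zero. The conceptual bridge is that both enrichments arise, along the closed immersions above, by pulling back the Witt-vector Frobenius on $\mathfrak{A}$ itself; checking this requires verifying that the relative-Frobenius formalism of \cite[Theorem 2.9.7]{AGV22} is compatible with morphisms of formal schemes carrying Frobenius lifts, in the spirit of \cite[Remark 4.28]{BGV25}. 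Once this naturality is in place, the two factorizations fit into a single commutative diagram in $\calg(\prlc)$, yielding the required equivalence $\mathcal{D}_0 \circ \xi_{C^{\flat}} \simeq \bar{\mathcal{D}}_\infty$.
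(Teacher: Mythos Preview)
Your approach is essentially the paper's: both pass through the formal scheme $\mathfrak{A}=\spf W(\mathcal{O}_{C^{\flat}})$ with the $(p,[p^{\flat}])$-adic topology (equivalently, the equivalence $\agmot(\bar k)\simeq\fagmot(W(\mathcal{O}_{C^{\flat}}))$ of \cite[Theorem~3.1.10]{AGV22}) and use its Frobenius-equivariant generic-fiber functor as the common bridge; the paper packages the $\mathcal{D}_0$-side identification into a citation of \cite[Proposition~5.11]{LBV23}, while you spell out both boundary factorizations symmetrically. One minor correction: the adic generic fiber of $\mathfrak{A}$ is $\mathcal{Y}_{[0,\infty]}$, not $\mathcal{Y}_{(0,\infty)}$---the latter arises after restricting along the open immersion, exactly as in the paper's diagram.
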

\begin{proof}
  We use the equivalence $\agmot(\bar{k}) \simeq \fagmot(W(\mathcal{O}_C^{\flat}))$ from \cite[Theorem 3.1.10]{AGV22}, where $W(\mathcal{O}_C^{\flat})$ is equipped with the $(p,[p^{\flat}])$-adic topology, here $p^{\flat} \in \mathcal{O}_C^{\flat}$ is a pseudo-uniformizer with $p^{\flat,\sharp}=p$. Then we have a commutative diagram (up to homotopy)
\[
\begin{tikzcd}
\agmot(\bar{k}) \arrow[r, "\varphi_{\mathrm{rel}}"]             & \agmot(\bar{k})^{\varphi_{\omega}} \arrow[r]                                       & \rigmot(\breve{K})^{\varphi_{\omega}} \arrow[r]                                       & \rigmot(\mathbf{FF}) \\
\fagmot(W(\mathcal{O}_C^{\flat})) \arrow[u, "\simeq"] \arrow[r] & \fagmot(W(\mathcal{O}_C^{\flat}))^{\varphi_{\omega}} \arrow[u, "\simeq"] \arrow[r] & {\rigmot(\mathcal{Y}_{[0,\infty]})^{\varphi_{\omega}}} \arrow[u] \arrow[ru] &                     
\end{tikzcd}
\]
where the top row from left to right is the functor $\bar{\mathcal{D}}_{\infty}$. Therefore, it suffices to show the bottom functor
\[
\fagmot(W(\mathcal{O}_C^{\flat})) \to \rigmot(\mathcal{Y}_{(0,\infty]})^{\varphi_{\omega}} \to \rigmot(\mathbf{FF})
\]
is equivalent to $\mathcal{D}_0 \circ \xi_{C^{\flat}}$. This is \cite[Proposition 5.11]{LBV23}.
\end{proof}

\subsection{\texorpdfstring{The Derived $\boldsymbol{\infty}$-Category of $\boldsymbol{\varphi}$-Modules}{The Derived Infinite-Category of phi-Modules}}
We recall the derived $\infty$-category of $\varphi$-modules and will relate them to vector bundles on the Fargues-Fontaine curve in the next subsection.

Let $l$ be a perfect field and $L_0=\frc W(l)$. Recall that a \myemph{$\boldsymbol{\varphi}$-module} over $l$ is a finite-dimensional $L_0$-vector space $D$ with a $\sigma$-semi-linear bijection $\varphi_D \colon D \to D$, where $\sigma \colon L_0 \to L_0$ is the Frobenius induced by the Frobenius of $l$. We will denote the bounded derived $\infty$-category of $\varphi$-modules over $l$ by $\bdcat_{\varphi}(L_0)$ and its Ind-completion by $\dcat_{\varphi}(L_0) \in \calg(\prstc)$.

\begin{eg}[Simple $\varphi$-Modules]
  \label{eg:isocry-twist}
  For every $n \in \Z$, we define the \myemph{Tate twist} $L_0(n)$ by the one-dimensional $L_{0}$-vector space $L_{0}$ with the twisted Frobenius\footnote{Our twist convention is dual to the standard one.} $\varphi_{L_0(n)}=p^{-n} \sigma$.

  More generally, let $\lambda \in \Q$ with the unique form $\lambda=m/n$ with $m\in \Z$, $n \in \Z_{>0}$ and $\mathrm{gcd}(m,n)=1$. We can define an $\varphi$-module $L_0(\lambda)$ over $l$ as follows: the underlying vector space is $L_{0}^n$ and the bijection $\varphi_{\lambda} \colon L_0(\lambda) \to L_{0}(\lambda)$ is given by
  \[
\varphi_{\lambda}(e_1,\dots,e_n) = (e_1,\dots,e_n) A_{\lambda}, \quad \text{ with } A_{\lambda} =       \begin{pmatrix}
        0&I_{n-1}\\
        p^{m}& 0
      \end{pmatrix}
    \]
    where $(e_1,\dots,e_{n})$ is the standard basis of $L_0^{n}$. It is not hard to see the characteristic polynomial of $\varphi_{\lambda}$ is $f_{\lambda}(T)=(-1)^n (T^n-p^m)$. We will refer to the rational number $\lambda$ as the \myemph{slope} of $L_0(\lambda)$.
  \end{eg}

  \begin{df}
  \label{df:phi-mod-weight}
\begin{enumerate}
\item If $l$ is a finite field with cardinality $q=p^{f}$, we say a $\varphi$-module $(D,\varphi_{D})$ over $L_0$ is \myemph{pure of weight $i$} if each generalized eigenvalue $\lambda$ of the $D$ is a $q$-Weil number of weight $i$, i.e., it is an algebraic number such that $\abs{\sigma(\lambda)}=q^{i/2}$ for every embedding $\sigma\colon \bar{\Q} \hookrightarrow \mathbb{C}$.
  
\item If $l= \bar{\mathbb{F}}_p$, we say a $\varphi$-module $(D, \varphi_D)$ over $L_0$ is \myemph{pure of weight $i$} if it has a model of pure weight $i$ over $\mathbb{F}_q$ for some $q=p^{f}$.
\end{enumerate}
\end{df}

We next study the relationship between slopes and weights of $\varphi$-modules (see Proposition \ref{pro:pure-isoc-half-slope}). They will be used to construct the new filtration on the de Rham-Fargues-Fontaine cohomologies; see Corollary \ref{cor:fil-vbFF}. Readers primarily interested in the comparison result may skip the remainder of this subsection.

\subsubsection{Weights and Slopes of $\boldsymbol{\varphi}$-Modules}
We fix an algebraic closure $\bar{l}$ of $l$. Let $\breve{L}$ (resp. $L_0$) be the fraction field of the ring of Witt vectors over $\bar{l}$ (resp. over $l$).

\begin{lem}
  \label{lem:ss-wt-slope}
  Assume $l= \mathbb{F}_q$ with $q=p^{f}$. Let $(D, \varphi)$ be a pure $\varphi$-module
  of weight $i$ over
  $l$. If the $\varphi$-module $\hat{D}= D \otimes_{L_0} \breve{L}$ is semistable of slope
  $\lambda$, i.e., direct sum of $\breve{L}(\lambda)$, then $\lambda=i/2$.
\end{lem}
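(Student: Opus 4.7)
The plan is to reduce the problem to a one-dimensional computation via the determinant $\bigwedge^{r} D$ (with $r = \dim_{L_{0}} D$), in which purity forces the resulting Frobenius scalar to equal $\pm q^{ri/2}$ and semistability forces it to have $p$-adic valuation $fr\lambda$; comparing gives $\lambda = i/2$.

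Choose a basis so that $\varphi$ has matrix $A \in \mathrm{GL}_{r}(L_{0})$, whence the $L_{0}$-linear operator $\varphi^{f}$ has matrix $A_{f} := A\,\sigma(A) \cdots \sigma^{f-1}(A)$. A short calculation using $\varphi \circ \varphi^{f} = \varphi^{f} \circ \varphi$ together with the $\sigma$-semilinearity of $\varphi$ shows that the characteristic polynomial $P(T)$ of $\varphi^{f}$ has $\sigma$-fixed coefficients, so $P(T) \in \mathbb{Q}_{p}[T]$. Purity, interpreted so that the multi-set of generalized eigenvalues is $\mathrm{Gal}(\bar{\mathbb{Q}}/\mathbb{Q})$-stable, then upgrades this further to $P(T) \in \mathbb{Q}[T]$; in particular the scalar $\beta := \det(A_{f}) = N_{L_{0}/\mathbb{Q}_{p}}(\det A)$ lies in $\mathbb{Q}$.

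The element $\beta$ is the Frobenius eigenvalue of $\varphi^{f}$ on the rank-one $\varphi$-module $\det D := \bigwedge^{r} D$. Additivity of slopes in direct sums gives that $\det D \otimes_{L_{0}} \breve{L}$ is semistable of slope $r\lambda$, so $v_{p}(\det A) = r\lambda$ and therefore $v_{p}(\beta) = f\,v_{p}(\det A) = fr\lambda$. On the other hand, multiplicativity of eigenvalues in exterior powers shows that $\beta$ is a $q$-Weil integer of weight $ri$, with $|\beta|_{\infty} = q^{ri/2}$; combined with $\beta \in \mathbb{Q}$ this forces $\beta = \pm q^{ri/2}$ and hence $v_{p}(\beta) = fri/2$. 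Equating the two expressions yields $\lambda = i/2$.

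The main obstacle is the rationality step $P(T) \in \mathbb{Q}[T]$: this is where one needs the $\mathrm{Gal}(\bar{\mathbb{Q}}/\mathbb{Q})$-stability of the eigenvalue multi-set, which is stronger than asking that each individual generalized eigenvalue be a Weil number (the argument for $P(T) \in \mathbb{Q}_{p}[T]$ is purely formal, but the descent from $\mathbb{Q}_{p}$ to $\mathbb{Q}$ is the substantive input). Once this is granted, the remainder of the proof is the short product-formula calculation above.
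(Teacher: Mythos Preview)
Your identification of the rationality step $P(T)\in\Q[T]$ as the substantive input is exactly right, and it is not a mere technicality: with the paper's Definition~\ref{df:phi-mod-weight} (each eigenvalue of $\varphi^f$ is individually a $q$-Weil number, no Galois-stability of the multiset assumed) the lemma is actually false. Take $q=p=5$, $D=\Q_5$, and $\varphi$ given by multiplication by the root $\mu\in\Q_5$ of $T^2-4T+5$ satisfying $v_5(\mu)=1$; then $\mu$ is a $5$-Weil number of weight $1$ (its conjugates $2\pm i$ both have complex absolute value $\sqrt 5$), so $(D,\varphi)$ is pure of weight $1$, yet $\hat D$ is a rank-one isocrystal of slope $v_5(\mu)=1\ne 1/2$. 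The paper's own argument does not escape this: the Dieudonn\'e--Manin basis it invokes exists only over $\breve L$, not over $L_0$, so the step ``$\sigma^f(x_i)=x_i$'' fails; and in the counterexample the sole eigenvalue $\mu$ of $\varphi^f$ is visibly not an eigenvalue of $A_1^f=(5)$, so the concluding identity $\abs{\mu}=q^{\lambda}$ is simply wrong there.

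Under the standard stronger purity hypothesis (characteristic polynomial of $\varphi^f$ lying in $\Q[T]$), which is what actually holds for the $\varphi$-modules coming from rigid cohomology in the application to Corollary~\ref{cor:fil-vbFF}, your determinant argument is correct and cleaner than the paper's route: it bypasses any basis choice in $\hat D$ and reduces directly to the comparison $\beta=\pm q^{ri/2}$, hence $v_p(\beta)=fri/2=fr\lambda$. So the gap you flagged is genuine and is shared by the paper; once the definition of purity is strengthened to include $P(T)\in\Q[T]$, your proof goes through as written.
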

\begin{proof}
Let $\hat{\varphi}$ be the Frobenius automorphism of $\hat{D}$,
i.e., $\hat{\varphi}= \varphi \otimes \sigma$. By the assumption, we
can find $\alpha \in D$ and $\mu \in L_0$ such that $\varphi^f\alpha= \mu \alpha$ and $\abs{\mu}=q^{i/2}$ for every embedding $\overline{ \Q }_p \hookrightarrow
\mathbb{C}$. Choose a basis $(e_1,\dots,e_d)$ of $D$ over $L_0$ such
that the matrix of $\hat{\varphi}$ under the induced basis
$(\hat{e}_1,\dots, \hat{e}_{d})$ is
\[
\operatorname{diag}(A_{\lambda}, A_{\lambda},\cdots, A_{\lambda})
\]
where $A_{\lambda}$ is the one in Example \ref{eg:isocry-twist}. We write $  \alpha=x_1e_1+\cdots +x_de_d$ with $x_i \in L_0$. Note that we have
\begin{align*}
\hat{\varphi}^f(\alpha \otimes 1)&= (\hat{e}_1,\dots, \hat{e}_d)
\operatorname{diag}(A_{\lambda}^f,\dots,A_{\lambda}^f)
\begin{pmatrix}
  \sigma^f(x_1)\\
  \vdots\\
  \sigma^f(x_d)\\
  \end{pmatrix}\\
  &=(\hat{e}_1,\dots, \hat{e}_d)
\operatorname{diag}(A_{\lambda}^f,\dots,A_{\lambda}^f)
\begin{pmatrix}
  x_{1}\\
  \vdots\\
  x_{d}
\end{pmatrix}
\end{align*}
since $x_i \in L_0$, hence $\sigma^f(x_i)=x_i$. On the other hand, we
have
\[
\hat{\varphi}^f(\alpha \otimes 1)= \mu (\alpha \otimes 1)=
(\hat{e}_1,\dots, \hat{e}_d) \mu
\begin{pmatrix}
  x_1\\
  \vdots\\
  x_d
\end{pmatrix}.
\]
Therefore, $\mu$ is also the eigenvalue of $A_{\lambda}$, and then we
have
\[
q^{\frac{i}{2}}= \abs{\mu}= p^{f\lambda}=q^{\lambda},
\]
hence $\lambda=i/2$ as desired.
\end{proof}

\begin{pro}
  \label{pro:pure-isoc-half-slope}
Assume $l$ is algebraically closed. If $(D, \varphi_{D})$ is a pure
$\varphi$-module over $l$ of weight $i$, then it is semistable of slope $i/2$.
\end{pro}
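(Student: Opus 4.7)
The plan is to reduce the algebraically closed case to Lemma \ref{lem:ss-wt-slope} via the canonical Dieudonn\'{e}--Manin slope filtration on a finite-field model. First, by Definition \ref{df:phi-mod-weight}(2), I would choose a $\varphi$-module $(D_0, \varphi_0)$ over $W(\mathbb{F}_q)[1/p]$ for some $q = p^f$, pure of weight $i$ in the sense of Definition \ref{df:phi-mod-weight}(1), such that $D \simeq D_0 \otimes_{W(\mathbb{F}_q)[1/p]} L_0$.

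Next I would invoke the slope filtration theorem for $F$-isocrystals over a perfect field: the $\varphi$-module $D_0$ carries a canonical increasing filtration by sub-$\varphi$-modules $\{\mathrm{Fil}_{\lambda} D_0\}_{\lambda \in \Q}$, whose graded pieces $\mathrm{gr}_\lambda D_0$ become semistable of slope $\lambda$ after scalar extension to $L_0$. The key observation is that purity passes to subquotients: since $\varphi_0^f$ is $W(\mathbb{F}_q)[1/p]$-linear, the generalized eigenvalues of $\varphi_0^f$ on $\mathrm{gr}_\lambda D_0$ form a subcollection (counted with multiplicity) of those on $D_0$, all of which are $q$-Weil numbers of weight $i$ by hypothesis. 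Hence each nonzero $\mathrm{gr}_\lambda D_0$ is itself pure of weight $i$ over $\mathbb{F}_q$.

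Applying Lemma \ref{lem:ss-wt-slope} to each nonzero graded piece forces the corresponding slope to equal $i/2$. Therefore the slope filtration of $D_0$ is concentrated in the single degree $i/2$, so $D = D_0 \otimes_{W(\mathbb{F}_q)[1/p]} L_0$ is semistable of slope $i/2$, as desired.

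The main obstacle I anticipate is the first step: one must verify that the slope filtration is canonically defined already over the finite base $W(\mathbb{F}_q)[1/p]$, not merely after extension to $L_0$, so that its graded pieces are honest $\varphi$-modules over $\mathbb{F}_q$ to which Lemma \ref{lem:ss-wt-slope} directly applies. This descent is a standard fact (e.g.\ Katz's slope filtration theorem for $F$-isocrystals over a perfect field), but a precise citation should be given. Once this is in place, the purity-in-subquotients argument and the final appeal to Lemma \ref{lem:ss-wt-slope} are routine.
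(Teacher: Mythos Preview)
Your proposal is correct and follows essentially the same route as the paper: choose a finite-field model, decompose it by slopes already over $\mathbb{F}_q$, observe that each piece inherits purity of weight $i$, and apply Lemma~\ref{lem:ss-wt-slope} to force every slope to equal $i/2$. The only cosmetic difference is that the paper invokes a slope \emph{decomposition} over $\mathbb{F}_q$ (citing \cite[Lemma~8.1.11]{BC09}) rather than the slope \emph{filtration}; since the filtration splits over a perfect field this amounts to the same thing, and your anticipated descent issue is handled by exactly that reference.
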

\begin{proof}
By the assumption, we can find a model $(V, \varphi_{V})$ of $(D,
\varphi_{D})$ over $\mathbb{F}_q$ for some $q=p^f$. Assume
\[
V \cong \bigoplus_{j=1}^r V(\alpha_j)
\]
is the slope decomposition as in \cite[Lemma 8.1.11]{BC09}, i.e., the base change of the decomposition into $L_0= \frc W(l)$ is the Dieudonné–Manin decomposition of $(D, \varphi_{D})$. Since $V$ is pure of weight $i$, so is
$V(\alpha_j)$ for every $j$. By Lemma \ref{lem:ss-wt-slope}, we can
know that $\alpha_j=i/2$ for every $j$, hence $D$ is semistable of slope $i/2$.
\end{proof}


\subsection{Solid Quasi-Coherent Sheaves on the Fargues-Fontaine Curve}
Following \cite{Andreqcoh21}, there exists an $\infty$-category of \myemph{solid quasi-coherent sheaves} on the Fargues-Fontaine curve, denoted by $\qcoh(\mathbf{FF})$. This category is constructed via using Clausen-Scholze's condensed mathematics; see \cite{CS1,CS2}. For convenience, we fix an implicit strong limit cardinal $\kappa$ and work with $\kappa$-condensed objects. As a result, the descent properties established \cite{Andreqcoh21} hold in $\calg(\prstc)$. In particular, we have $\qcoh(\mathbf{FF}) \in \calg(\prstc)$.

In this paper, a central role will be played by the stable full subcategory of perfect complexes $\mathbf{Perf}(\mathbf{FF})$. These perfect complexes are indeed strict by \cite[Proposition 2.6]{AL25}: each perfect complex $\mathcal{E} \in \mathbf{Perf}(\mathbf{FF})$ is quasi-isomorphic to a bounded complex of vector bundles.

We now focus on the computations of vector bundles over $\mathbf{FF}$ and how they are related to $\varphi$-modules over $\bar{k}$. Recall that we can cover $\mathcal{Y}_{(0,\infty)}$ by open subspaces $\mathcal{Y}_{I}$, as defined in \cite[\S 12.2]{berkeley}, where $I$ runs through compact sub-intervals in $(0,\infty)$ with rational endpoints. The global section $\Gamma(\mathcal{Y}_I, \mathcal{O})$ is given by $B_I$, as defined in \cite[\S 1.6, especially Example 1.6.3]{thecurve}. This shows that we have an isomorphism
\[
  \Gamma(\mathcal{Y}_{(0,\infty)}, \mathcal{O})=\lim_{I \subseteq (0,\infty)} B_I \simeq B.
\]
Here $B$ is obtained by inverting $p$ in the completion of $A_{\mathrm{inf}}[1/p, 1/[p^{\flat}]]$ with respect to a family of Gauss norms (c.f. \cite[\S 1.6]{thecurve}). Clearly, there is a natural map $\breve{K} \to B$ induced by the inclusion $W(\bar{k}) \to \mathcal{O}_C$ (or equivalently, $\bar{k} \to \mathcal{O}_{C^{\flat}}$). This yields a morphism of adic spaces:
\begin{equation}
  \label{eq:map-Y-breveK}
  e \colon \mathcal{Y}_{(0,\infty)} \to \spa(\breve{K})
\end{equation}
which is also compatible with Frobenius morphisms.

The pullback along the morphism $e$ in \eqref{eq:map-Y-breveK} defines a functor
\[
\mathcal{E} \colon \bdcat_{\varphi}(\breve{K}) \simeq \mathbf{Perf}(\spa(\breve{K}, \mathcal{O}_{\breve{K}}))^{\varphi} \to \mathbf{Perf}(\mathcal{Y}_{(0,\infty)})^{\varphi} \simeq \mathbf{Perf}(\mathbf{FF}),
\]
where the superscripts $\varphi$ mean to take the equalizer of $\varphi^{*}$ and $\mathrm{Id}$ in $\catinf$, and the first equivalence is \cite[Proposition 2.50]{BGV25} and the last equivalence follows from the analytic descent of perfect complexes in \cite[Theorem 5.3]{Andreqcoh21}. Since the perfect complexes are compact objects in $\qcoh(\mathbf{FF})$, as shown in \cite[Proposition 5.37 and Corollary 5.51.1]{Andreqcoh21}, this functor can extend to unbounded derived $\infty$-categories by taking Ind-completion:
\begin{equation}
  \label{eq:E-functor}
  \mathcal{E} \colon \dcat_{\varphi}(\breve{K}) \to \qcoh(\mathbf{FF})
\end{equation}

\begin{rmk}
  \label{rmk:factorization-e}
We use the natural map $\breve{K} \to B$ to define a $\varphi$-equivariant morphism $e \colon \mathcal{Y}_{(0,\infty)} \to \spa(\breve{K})$ in \eqref{eq:map-Y-breveK}. A similar argument shows that $\Gamma(\mathcal{Y}_{(0,\infty]}, \mathcal{O})=B^+$ (we adopt the notation in \cite[\S 1.10]{thecurve}). This gives a morphism $\bar{e} \colon \mathcal{Y}_{(0,\infty]} \to \spa(\breve{K})$. In fact, the morphism $e$ can be factorized into the composition of $\bar{e}$ with the pullback $\iota \colon \mathcal{Y}_{(0,\infty)} \to \mathcal{Y}_{(0,\infty]}$. In fact, the natural closed immersion $c \colon \spa(\breve{K}) \to \mathcal{Y}_{(0,\infty]}$ is a section of $\bar{e}$.
\end{rmk}

\begin{rmk}[GAGA]
  \label{rmk:FF-GAGA}
  The original construction of the Fargues-Fontaine curve is algebraic (e.g. in \cite{thecurve}) and we have the GAGA theorem for it, i.e., their category of coherent sheaves are equivalent (\cite{KL1}, \cite{Far18}, \cite{FS24}) via analytification.
\end{rmk}

By our construction, this is exactly the analytic version (using GAGA below) of the algebraic $\mathcal{E}$-functor defined in \cite[\S 8.2.3]{thecurve}. According to our conventions on Tate twists of $\varphi$-modules (in Example \ref{eg:isocry-twist}), we define
\begin{equation}
  \label{eq:FF-twist}
  \mathcal{O}_{\mathbf{FF}}(1):= \mathcal{E}(\breve{K}(1)).,
\end{equation}
which agrees with the usual notation on the Fargues-Fontaine curve. Accordingly, as is standard, we obtain all the twists $\mathcal{O}_{\mathbf{FF}}(\lambda)$ for $\lambda \in \Q$.

\begin{pro}
  \label{pro:curve-twist}
  Let $\lambda, \mu \in \Q$. Then we have the following computations about mapping spectra:
\begin{align*}
\pi_0 \mathrm{map} (\mathcal{O}_{\mathbf{FF}}(\lambda), \mathcal{O}_{\mathbf{FF}}(\mu))&=
{\begin{cases}
0  & \mu < \lambda \\
B^{\varphi^h=p^{d}}   & \mu-\lambda=d/h, d,h \in \Z_{\ge 0}, \gcd(d,h)=1
\end{cases}}\\
  \pi_{-1}  \mathrm{map}(\mathcal{O}_{\mathbf{FF}}(\lambda), \mathcal{O}_{\mathbf{FF}}(\mu))&=0 ,\quad                 \lambda\le \mu                                    
\end{align*}
\end{pro}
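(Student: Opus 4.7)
The plan is to combine tensor-Hom adjunction with Frobenius descent, reducing the problem to a standard computation of the global sections of line bundles on the Fargues-Fontaine curve.

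First, using dualizability and the symmetric monoidality of the functor $\mathcal{E}$ from \eqref{eq:E-functor}, I would identify the internal Hom $\underline{\mathrm{Hom}}(\mathcal{O}_{\mathbf{FF}}(\lambda), \mathcal{O}_{\mathbf{FF}}(\mu))$ with $\mathcal{O}_{\mathbf{FF}}(\mu - \lambda)$ via the multiplicativity of the Tate twists; this reduces the problem to computing $R\Gamma(\mathbf{FF}, \mathcal{O}_{\mathbf{FF}}(\nu))$ for $\nu := \mu - \lambda$. Then by Proposition~\ref{pro:sp-out-curve}(2), the pullback along $\mathcal{Y}_{(0,\infty)} \to \mathbf{FF}$ identifies $R\Gamma(\mathbf{FF}, -)$ with the homotopy $\varphi$-fixed points of global sections on $\mathcal{Y}_{(0,\infty)}$. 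Since $\Gamma(\mathcal{Y}_{(0,\infty)}, \mathcal{O}) = B$ and $\mathcal{O}_{\mathbf{FF}}(\nu)|_{\mathcal{Y}_{(0,\infty)}}$ corresponds to $\breve{K}(\nu) \otimes_{\breve{K}} B$ with its diagonal Frobenius, this yields
\[
R\Gamma(\mathbf{FF}, \mathcal{O}_{\mathbf{FF}}(\nu)) \simeq \mathrm{fib}\bigl(\breve{K}(\nu) \otimes_{\breve{K}} B \xrightarrow{\varphi - 1} \breve{K}(\nu) \otimes_{\breve{K}} B\bigr),
\]
so that $\pi_0$ of the mapping spectrum is $\ker(\varphi - 1)$ and $\pi_{-1}$ is $\mathrm{coker}(\varphi - 1)$.

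Next, I would compute $\ker(\varphi - 1)$ by unpacking the explicit Frobenius matrix $A_\nu$ of Example~\ref{eg:isocry-twist}: writing a general element as $v = \sum_{i=1}^{h} b_i e_i$ with $b_i \in B$ and imposing $\varphi(v) = v$ component by component yields the cyclic system $b_j = \varphi_B(b_{j+1})$ for $1 \le j \le h-1$ together with the wrap-around $b_h = p^d \varphi_B(b_1)$. This collapses to a single equation on $b_1$, identifying the kernel with $B^{\varphi^h = p^d}$ when $\nu \ge 0$ (after aligning the Tate twist sign conventions for the integer and fractional cases), and giving $0$ when $\nu < 0$ thanks to the classical vanishing $B^{\varphi^h = p^d} = 0$ for $d < 0$.

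The hardest part will be the vanishing of $\mathrm{coker}(\varphi - 1)$ when $\nu \ge 0$, equivalently the vanishing $H^1(\mathbf{FF}, \mathcal{O}_{\mathbf{FF}}(\nu)) = 0$. This is a form of the classical \emph{fundamental exact sequence} of $p$-adic Hodge theory, namely the surjectivity of $\varphi^h - p^d \colon B \to B$ for $d, h \ge 0$. I would invoke the established $H^1$-vanishing for $\mathcal{O}(\nu)$ on the algebraic Fargues-Fontaine curve (\cite{thecurve}), and transport the conclusion to the analytic setting through the GAGA equivalence recalled in Remark~\ref{rmk:FF-GAGA}.
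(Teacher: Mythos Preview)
Your approach is more hands-on than the paper's, which simply cites the algebraic computations in \cite[Proposition~5.6.23, Proposition~8.2.3]{thecurve} and transports them via the GAGA equivalence of Remark~\ref{rmk:FF-GAGA}. Conceptually, your Frobenius-descent computation is exactly what those references do, so the two routes ultimately coincide; yours has the virtue of being self-contained, at the cost of reproving standard material.

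That said, there are two genuine issues in your reduction step. First, the identification $\underline{\mathrm{Hom}}(\mathcal{O}_{\mathbf{FF}}(\lambda),\mathcal{O}_{\mathbf{FF}}(\mu))\simeq\mathcal{O}_{\mathbf{FF}}(\mu-\lambda)$ is \emph{false} for general rational slopes: $\mathcal{O}_{\mathbf{FF}}(\lambda)$ has rank $h_\lambda$ (the denominator of $\lambda$), so the internal Hom is a semistable bundle of slope $\mu-\lambda$ and rank $h_\lambda h_\mu$, i.e.\ isomorphic to $\mathcal{O}_{\mathbf{FF}}(\mu-\lambda)^{\oplus h_\lambda h_\mu/h}$. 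For example, $\underline{\mathrm{Hom}}(\mathcal{O}_{\mathbf{FF}}(1/2),\mathcal{O}_{\mathbf{FF}}(3/2))\simeq\mathcal{O}_{\mathbf{FF}}(1)^{\oplus 4}$, whence $\pi_0\mathrm{map}=(B^{\varphi=p})^4$ rather than $B^{\varphi=p}$. This does not affect the two vanishing statements, nor the integer-slope instances actually used later (Theorem~\ref{thm:FFHK-comp}, Remark~\ref{rmk:monoidal-eq-noncan}, Corollary~\ref{cor:fil-vbFF}), but it means your argument only proves the $\pi_0$ formula as stated when $h_\lambda h_\mu=h$, e.g.\ when one of $\lambda,\mu$ is an integer. (Arguably the displayed formula in the proposition is itself imprecise on this point.)

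Second, Proposition~\ref{pro:sp-out-curve}(2) concerns $\rigmot$, not $\qcoh$ or $\mathbf{Perf}$; the descent you need is the one invoked just above \eqref{eq:E-functor}, namely $\mathbf{Perf}(\mathbf{FF})\simeq\mathbf{Perf}(\mathcal{Y}_{(0,\infty)})^{\varphi}$ from \cite[Theorem~5.3]{Andreqcoh21}. To then conclude that $R\Gamma(\mathbf{FF},\mathcal{O}_{\mathbf{FF}}(\nu))$ is the two-term fiber you wrote, you also need $R\Gamma(\mathcal{Y}_{(0,\infty)},\mathcal{O})\simeq B$ in degree~$0$, i.e.\ the vanishing of higher coherent cohomology on the Stein space $\mathcal{Y}_{(0,\infty)}$; this is true but worth stating, since without it the homotopy fixed points need not be a two-term complex.
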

\begin{proof}
This follows from computations in the algebraic case (\cite[Proposition 5.6.23, Proposition 8.2.3]{thecurve}) and the GAGA theorem (cf. \cite[Proposition II.2.7]{FS24}).
\end{proof}

\section{Motivic Realization Functors and Comparisons}
The main goal of this section is to establish a comparison between the de Rham-Fargues-Fontaine cohomology and the Hyodo-Kato cohomology. We begin by recalling their construction from \cite{LBV23, BGV25}, and then compare them using properties of motives introduced in \S \ref{subsec:rigmot}. In the final part, we also compare another Fargues-Fontaine cohomology studied in \cite{LB18, Bos23} with the de Rham-Fargues-Fontaine cohomology in the motivic context.
\subsection{The Rigid Cohomology and the Hyodo-Kato Cohomology}
\label{subsec:HKcoh}
As in \cite{MWrig}, there is a direct definition for the rigid cohomology via the overconvergent de Rham cohomology:

Let $l$ be an arbitrary perfect field of characteristic $p>0$. Applying the overconvergent de Rham cohomology to the Monsky-Washnitzer functor \eqref{eq:MWfunctor}, we obtain a realization functor :
\[
\rgama{\mathrm{rig}} \colon \agmot(l) \xrightarrow{\xi} \rigmot(L_{0}) \xrightarrow{\rgama{\mathrm{dR},K_0}^{\dagger}} \dcat(L_{0}),
\]
where $L_0= \frc  W(l)$ and $\rgama{\mathrm{dR},K_0}^{\dagger}$ is the covariant overconvergent de Rham realization studied in \cite{MWrig, LBV23}. To be precise, the restriction of $\rgama{\mathrm{dR},K_0}^{\dagger}$ to compact motives is given by the overconvergent de Rham realization of the duals of these compact motives\footnote{They are dualizable by \cite{Rio05}.}, and then one can extend to the non-compact part by taking Ind-completion.

Furthermore, we can enhance this realization by taking value of $\varphi$-modules: as $\rgama{\mathrm{rig}}$ is compatible with the Frobenius pullbacks, we have
\[
\rgama{\mathrm{rig}}^{\varphi} \colon \agmot(l)^{\varphi_{\omega}} \to \mathcal{D}_{\varphi}(L_{0}).
\]
Composing with the relative Frobenius (\ref{eq:rel-frob}), we get an algebraic realization taking value of $\varphi$-modules, denoted by 
\begin{equation}
  \label{eq:rigcoh}
  \rgama{\mathrm{rig}} \colon \agmot(l) \to \dcat_{\varphi}(L_{0}),
\end{equation}
referred as the \myemph{rigid realization}.

We next use the rigid realization \eqref{eq:rigcoh} to give a quick definition of the motivic Hyodo-Kato realization defined in \cite{BGV25}. For this, we put $l= \bar{k}$. In the case, the rigid realization is
\[
\rgama{\mathrm{rig}} \colon \agmot(\bar{k}) \to \dcat_{\varphi}(\breve{K}).
\]
Note that the image of ${\mathbbm 1}(-1)$ under this realization is $\breve{K}(-1)$ (\cite[Remark 4.41, Lemma 4.30]{BGV25}). Then adding the monodromy operators gives the \myemph{(geometric) Hyodo-Kato realization}
\begin{equation}
  \label{eq:HK-coh}
  \rgama{\mathrm{HK},C} \colon \rigmot(C) \to \dcat_{(\varphi,N)}(\breve{K}).
\end{equation}
More precisely, if we choose the canonical pseudo-uniformizer $p \in \mathcal{O}_{C}$ and use the identifications in Remark \ref{rmk:grmot} ($2$), then there is an equivalence
\[
\rigmot(C) \simeq \agmot_N(\bar{k})
\]
of $\infty$-categories by \cite[Corollary 4.14]{BGV25}; here $\agmot_N(\bar{k})$ is obtained by adding monodromy operators in $\agmot(\bar{k})$. For further details, we refer to \cite{BGV25}, especially \cite[\S 2 and Corollary 4.14]{BGV25}.

\begin{rmk}
  \label{rmk:two-HK-comp}
  The Hyodo-Kato realization (\ref{eq:HK-coh}) agrees with the one in \cite[Definition 4.42]{BGV25}. This follows from \cite[Proposition 4.46, Theorem 4.53]{BGV25} and definitions.
\end{rmk}

\begin{pro}
  \label{pro:HK-comp-rig}
  We have a monoidal equivalence $\pi \circ \rgama{\mathrm{HK},C} \circ \xi \simeq \rgama{\mathrm{rig}}$, where $\pi \colon \dcat_{(\varphi,N)}(\breve{K}) \to \dcat_{\varphi}(\breve{K})$ is forgetting the monodromy.
\end{pro}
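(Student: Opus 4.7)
The plan is to use the equivalence $\rigmot(C) \simeq \agmot_N(\bar{k})$ from \cite[Corollary 4.14]{BGV25} to reduce the claim to a direct comparison between $\rgama{\mathrm{HK},C}$ and $\rgama{\mathrm{rig}}$ on their respective sources, with the monodromy data dropping out of the calculation on the image of $\xi$.

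First I would identify $\xi \colon \agmot(\bar{k}) \to \rigmot(C)$ under the composite equivalence $\rigmot(C) \simeq \module{\chi {\mathbbm 1}}(\agmot(\bar{k})) \simeq \agmot_N(\bar{k})$ supplied by Proposition~\ref{pro:chi-unit-formula}, Remark~\ref{rmk:grmot} and \cite[Corollary 4.14]{BGV25}. As the left adjoint of $\chi$, the Monsky-Washnitzer functor corresponds to the free-module functor $M \mapsto M \otimes \chi {\mathbbm 1}$; using the splitting $\chi {\mathbbm 1} \simeq {\mathbbm 1} \oplus {\mathbbm 1}(-1)[-1]$ afforded by the canonical pseudo-uniformizer $p$, I would check that this free-module functor matches the canonical enrichment $\agmot(\bar{k}) \to \agmot_N(\bar{k})$ that equips a motive with its tautological (and, upon realization, vanishing) monodromy datum.

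Second, by the construction of $\rgama{\mathrm{HK},C}$ recalled just before the proposition --- which, via Remark~\ref{rmk:two-HK-comp}, coincides with the construction of \cite{BGV25} --- the Hyodo-Kato realization is precisely the rigid realization extended to record the action of $N$. Restricted to the image of the canonical enrichment and postcomposed with the forgetful functor $\pi$, this extension collapses back to $\rgama{\mathrm{rig}}$, yielding the equivalence $\pi \circ \rgama{\mathrm{HK},C} \circ \xi \simeq \rgama{\mathrm{rig}}$ at the level of functors. Monoidality is then automatic because every functor in sight is a morphism in $\calg(\prstc)$.

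The main obstacle will be the first step: making the dictionary between $\chi {\mathbbm 1}$-module structures and monodromy data explicit enough to verify that the free-module functor matches the canonical enrichment. This amounts to tracing how the summand ${\mathbbm 1}(-1)[-1]$ of $\chi {\mathbbm 1}$ produces the operator $N$ (cf.\ Remark~\ref{rmk:intro-HK-pse} and \cite[\S 2]{BGV25}) and confirming that free $\chi {\mathbbm 1}$-modules correspond, after realization, to $(\varphi,N)$-modules whose monodromy is trivialized by $\pi$. Once this dictionary is in place, the remainder of the argument is purely formal.
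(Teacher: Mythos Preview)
Your approach is correct in spirit and ultimately rests on the same ingredients as the paper's, but the paper's argument is considerably more streamlined. Rather than unpacking the equivalence $\rigmot(C)\simeq\agmot_N(\bar{k})$ and tracing what $\xi$ becomes under it, the paper invokes the motivic nearby cycle functor $\Psi\colon\rigmot(C)\to\agmot(\bar{k})$ of \cite[Definition~4.15]{BGV25}: the very construction of $\rgama{\mathrm{HK},C}$ in \eqref{eq:HK-coh} yields a monoidal equivalence $\pi\circ\rgama{\mathrm{HK},C}\simeq\rgama{\mathrm{rig}}\circ\Psi$, and one then simply uses that $\xi$ is a section of $\Psi$. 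This two-line argument bypasses entirely the ``main obstacle'' you flag.

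Your route amounts to re-deriving that same factorization by hand through the module-category dictionary. One caution: the intermediate claim that free $\chi{\mathbbm 1}$-modules carry ``vanishing'' monodromy is not literally correct at the motivic level (the action of the summand ${\mathbbm 1}(-1)[-1]$ on a free module is the tautological inclusion, not zero), so you would still need to appeal to the construction of $\rgama{\mathrm{HK},C}$ in \cite{BGV25} to see why $\pi$ collapses the free-module contribution back to $\rgama{\mathrm{rig}}(M)$. At that point you are essentially reproving $\pi\circ\rgama{\mathrm{HK},C}\simeq\rgama{\mathrm{rig}}\circ\Psi$, and it is cleaner to cite it directly as the paper does.
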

\begin{proof}
The construction of (\ref{eq:HK-coh}) shows that we have a monoidal equivalence $\pi \circ \rgama{\mathrm{HK},C} \simeq \rgama{\mathrm{rig}} \circ \Psi$, where $\Psi \colon \rigmot(C) \to \agmot(\bar{k})$ is the motivic nearby cycle functor defined in \cite[Definition 4.15]{BGV25}. Note that the Monsky-Washnitzer functor $\xi$ is a section of $\Psi$. Therefore, the monoidal equivalence is clear.
\end{proof}

\subsection{The De Rham Fargues-Fontaine Cohomology}
\label{subsec:FFcoh}
We are now in a position to recall the construction of the de Rham-Fargues-Fontaine realization in \cite{LBV23}. We will use its covariant form as well.

The construction of the spreading-out functor \eqref{eq:spreading-0} $\mathcal{D}_0 \colon \rigmot(C^{\flat}) \to \rigmot(\mathbf{FF})$ shows that it sends compact objects to dualizable objects. On the other hand, as shown in \cite[Theorem 4.46]{LBV23}, the relative overconvergent de Rham realization, defined in \cite[\S 4]{LBV23},
\[
\rgama{\mathrm{dR}, \mathbf{FF}}^{\dagger} \colon \rigmot(\mathbf{FF})^{\opp} \to \qcoh(\mathbf{FF})
\]
sends dualizable objects to perfect complexes that lie in $\qcoh(\mathbf{FF})_{\omega}$. Therefore, using the dual trick, we get a covariant functor
\[
\rigmot(C)_{\omega} \simeq \rigmot(C^{\flat})_{\omega} \xrightarrow{\mathcal{D}_0\circ (-)^{\vee}} \rigmot(\mathbf{FF})_{\mathrm{dual}}^{\opp} \xrightarrow{\rgama{\mathrm{dR}, \mathbf{FF}}^{\dagger}} \qcoh(\mathbf{FF})_{\omega},
\]
where the first equivalence is the motivic tilting equivalence given in \cite{Vez19}. Its Ind-completion will be denoted by
\begin{equation}
  \label{eq:drFF}
\rgama{\mathrm{FF}} \colon \rigmot(C) \to \qcoh(\mathbf{FF}),  
\end{equation}
called the \myemph{de Rham-Fargues-Fontaine realization}.

In other words, the (dual of) de Rham-Fargues-Fontaine cohomology is the overconvergent de Rham cohomology of the spreading-out motive from $0$. We next show that it computes the rigid cohomology.

\begin{lem}
  \label{lem:inf-pullback-to-Y}
  Let $e \colon \mathcal{Y}_{(0,\infty)} \to \spa(\breve{K}, \mathcal{O}_{\breve{K}})$ defined in \eqref{eq:map-Y-breveK}. Then we have a commutative diagram
  \[
    %
\begin{tikzcd}
\rigmot(\breve{K})^{\varphi_{\omega}} \arrow[r, "e^*"]                                        & {\rigmot(\mathcal{Y}_{(0,\infty)})^{\varphi_{\omega}}} \\
{\rigmot(\mathcal{Y}_{(0,\infty]})^{\varphi_{\omega}}} \arrow[u, "\simeq"] \arrow[ru, "j^*"'] &                                                       
\end{tikzcd}
  \]
  in $\calg(\prlc)$, where vertical equivalence is Proposition \ref{pro:sp-out-curve} $(1)$ and the functor $j^{*}$ is the pullback along the inclusion.
\end{lem}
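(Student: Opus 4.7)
My plan is to deduce the commutativity of the triangle from the factorization of $e$ recorded in Remark \ref{rmk:factorization-e}, together with the section property of the closed immersion $c \colon \spa(\breve{K}) \hookrightarrow \mathcal{Y}_{(0,\infty]}$ associated to the point $x_{\infty}$. By Proposition \ref{pro:sp-out-curve} $(1)$ applied to $\star=\infty$, the vertical equivalence in the diagram is precisely $c^{*}$, so the problem reduces to producing a natural equivalence $j^{*} \simeq e^{*} \circ c^{*}$ in $\calg(\prlc)$.

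First I would recall the factorization from Remark \ref{rmk:factorization-e}, namely $e = \bar{e} \circ j$, where $\bar{e} \colon \mathcal{Y}_{(0,\infty]} \to \spa(\breve{K})$ is the structure morphism induced by the inclusion $\breve{K} \to B^{+}$ and $j \colon \mathcal{Y}_{(0,\infty)} \hookrightarrow \mathcal{Y}_{(0,\infty]}$ is the open immersion. All three maps are $\varphi$-equivariant, so their pullbacks descend to morphisms in $\calg(\prlc)$ between the corresponding Frobenius-enriched motivic categories, and functoriality of pullback immediately yields $e^{*} \simeq j^{*} \circ \bar{e}^{*}$.

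Second, I would use the section property $\bar{e} \circ c = \mathrm{id}_{\spa(\breve{K})}$, also recorded in Remark \ref{rmk:factorization-e}. Applying pullback gives $c^{*} \circ \bar{e}^{*} \simeq \mathrm{id}$; since Proposition \ref{pro:sp-out-curve} $(1)$ tells us that $c^{*}$ is an equivalence, $\bar{e}^{*}$ must be its inverse, so in particular $\bar{e}^{*} \circ c^{*} \simeq \mathrm{id}$ as well. Composing with $j^{*}$ on the left then yields the desired equivalence $e^{*} \circ c^{*} \simeq j^{*} \circ \bar{e}^{*} \circ c^{*} \simeq j^{*}$.

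I do not foresee a serious obstacle: once the factorization and section relation of Remark \ref{rmk:factorization-e} are granted, the argument is completely formal. The only minor verification I would make explicit is that the Frobenius enrichment $(-)^{\varphi_{\omega}}$ is compatible with each of the pullbacks $e^{*}$, $\bar{e}^{*}$, $j^{*}$, $c^{*}$ as morphisms in $\calg(\prlc)$ rather than merely in $\catinf$; this follows because each such pullback is symmetric monoidal, preserves small colimits, and preserves compact objects, so the equalizer with $\mathrm{Id}$ commutes with their composition inside $\calg(\prlc)$.
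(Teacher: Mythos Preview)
Your proposal is correct and follows essentially the same route as the paper: both arguments reduce the commutativity to the factorization $e = \bar{e} \circ j$ and the section relation $\bar{e} \circ c = \mathrm{id}$ from Remark \ref{rmk:factorization-e}, then conclude $e^{*} \circ c^{*} \simeq j^{*}$. Your version is in fact slightly more careful than the paper's, since you explicitly use that $c^{*}$ is an equivalence (Proposition \ref{pro:sp-out-curve}) to pass from $c^{*} \circ \bar{e}^{*} \simeq \mathrm{id}$ to $\bar{e}^{*} \circ c^{*} \simeq \mathrm{id}$, whereas the paper writes $\bar{e}^{*} \circ c^{*} \simeq (\bar{e} \circ c)^{*}$ directly.
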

\begin{proof}
Recall from Remark \ref{rmk:factorization-e} the morphism $e$ has a factorization
  \[
e \colon \mathcal{Y}_{(0,\infty)} \xrightarrow{\iota} \mathcal{Y}_{(0,\infty]} \xrightarrow{\bar{e}} \spa(\breve{K}, \mathcal{O}_{\breve{K}}),
\]
and the closed immersion $c \colon \spa(\breve{K}, \mathcal{O}_{\breve{K}}) \to \mathcal{Y}_{(0,\infty]}$ is a section of $\bar{e}$. Thus, the commutativity follows from
\[
e^{*} \circ c^{*} \simeq \iota^{*} \circ \bar{e}^{*} \circ c^{*} \simeq \iota^{*} \circ (\bar{e} \circ c)^{*} \simeq \iota^{*}.
\]
\end{proof}

\begin{pro}
  \label{pro:drFF-rig}
  We have an equivalence $ \rgama{\mathrm{FF}} \circ \xi_C \simeq \mathcal{E} \circ \rgama{\mathrm{rig}}$ in $\calg(\prlc)$, where
  \[
\mathcal{E} \colon \dcat_{\varphi}(\breve{K}) \to \qcoh(\mathbf{FF})
\]
is defined in (\ref{eq:E-functor}).
\end{pro}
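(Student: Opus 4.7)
The plan is to show that, under the Monsky-Washnitzer functor $\xi_{C}$, the spreading-out functor $\mathcal{D}_0$ becomes (up to tilting) the pullback along the morphism $e \colon \mathcal{Y}_{(0,\infty)} \to \spa(\breve{K})$ of a motive coming from $\breve{K}$, and then to use a base change property for the relative overconvergent de Rham realization to push that pullback onto the quasi-coherent side, where by construction it is exactly the functor $\mathcal{E}$ of \eqref{eq:E-functor}.

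First, I would apply Proposition \ref{pro:comp-spread-0-inf} together with the compatibility of the Monsky-Washnitzer functor with the motivic tilting equivalence of \cite{Vez19} (both constructions factor through the chain of equivalences $\agmot(\bar{k}) \simeq \fagmot(\spf(\mathcal{O}_{C})) \simeq \fagmot(\spf(\mathcal{O}_{C^{\flat}}))$ from \cite[Theorem 3.1.10]{AGV22}) to identify $\mathcal{D}_0 \circ \xi_{C^{\flat}} \simeq \bar{\mathcal{D}}_{\infty}$ in $\calg(\prlc)$. Unwinding the definition of $\bar{\mathcal{D}}_{\infty}$ and invoking Lemma \ref{lem:inf-pullback-to-Y}, this functor factors as
\[
\agmot(\bar{k}) \xrightarrow{\varphi_{\mathrm{rel}}} \agmot(\bar{k})^{\varphi_{\omega}} \xrightarrow{\xi_{\breve{K}}} \rigmot(\breve{K})^{\varphi_{\omega}} \xrightarrow{e^{*}_{\mathrm{mot}}} \rigmot(\mathcal{Y}_{(0,\infty)})^{\varphi_{\omega}} \simeq \rigmot(\mathbf{FF}).
\]

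Second, I would invoke the naturality of the relative overconvergent de Rham realization $\rgama{\mathrm{dR},-}^{\dagger}$ from \cite[\S 4]{LBV23}: for the morphism $e$, there is a symmetric monoidal equivalence $\rgama{\mathrm{dR},\mathbf{FF}}^{\dagger} \circ e^{*}_{\mathrm{mot}} \simeq e^{*}_{\qcoh} \circ \rgama{\mathrm{dR},\breve{K}}^{\dagger}$ after passing to $\varphi$-equivariant categories. Since the functor $\mathcal{E}$ is, by construction, the pullback $e^{*}_{\qcoh}$ on $\varphi$-equivariant perfect complexes (then ind-completed via \cite{Andreqcoh21}), and since $\rgama{\mathrm{rig}} \simeq \rgama{\mathrm{dR},\breve{K}}^{\dagger} \circ \xi_{\breve{K}} \circ \varphi_{\mathrm{rel}}$ by the construction in \eqref{eq:rigcoh}, combining the two steps gives the desired equivalence $\rgama{\mathrm{FF}} \circ \xi_{C} \simeq \mathcal{E} \circ \rgama{\mathrm{rig}}$ on compact objects; ind-completion then extends it to all of $\agmot(\bar{k})$, with the symmetric monoidal structure preserved at every stage.

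The main obstacle is the base change equivalence in the second step, together with its compatibility with the $\varphi$-equivariant structures: the relative Frobenius $\varphi_{\mathrm{rel}}$ on algebraic motives must be matched with the geometric Frobenius on $\mathcal{Y}_{(0,\infty)}$ through $\xi_{\breve{K}}$ and $e^{*}$. The factorization $e = \bar{e} \circ \iota$ from Remark \ref{rmk:factorization-e} should be helpful: base change along the open immersion $\iota$ is essentially tautological from the sheaf property of $\rgama{\mathrm{dR},-}^{\dagger}$, while base change along $\bar{e} \colon \mathcal{Y}_{(0,\infty]} \to \spa(\breve{K})$ can be checked by combining the compatibility of the de Rham realization with the Monsky-Washnitzer functor with the fact that $\bar{e}$ admits the section $c$.
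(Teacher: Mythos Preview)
Your proposal is correct and follows essentially the same route as the paper's proof: reduce via tilting and Proposition~\ref{pro:comp-spread-0-inf} to $\bar{\mathcal{D}}_{\infty}$, use Lemma~\ref{lem:inf-pullback-to-Y} to factor through $e^{*}$, and then verify the base-change square for $\rgama{\mathrm{dR},-}^{\dagger}$ along $e$ using the factorization $e=\bar{e}\circ\iota$ from Remark~\ref{rmk:factorization-e}. The only cosmetic difference is that the paper packages the base-change step as a single commutative diagram with $\mathcal{Y}_{(0,\infty]}$ as an intermediate row (citing \cite[Proposition~5.3]{LBV23} for the inner squares and Remark~\ref{rmk:factorization-e} for the outer $\mathcal{E}$-side), whereas you state the base-change equivalence directly and then explain how the factorization resolves it; the content is the same.
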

\begin{proof}
  As tilting equivalence of rigid analytic motives is compatible with the Monsky-Washnitzer functor (\cite[Corollary 3.9]{Vez19}), we can prove for replacing $C$ by $C^{\flat}$. Using the comparison of spreading out from $0$ and $\infty$, see Proposition \ref{pro:comp-spread-0-inf}, it suffices to show there is an equivalence
  \[
\rgama{\mathrm{dR}, \mathbf{FF}}^{\dagger} \circ \tilde{\mathcal{D}}_{\infty} \simeq \mathcal{E} \circ \rgama{\mathrm{rig}}.
\]
in $\calg(\prlc)$. To this end, we use the identifications $\qcoh(\mathbf{FF}) \simeq \qcoh(\mathcal{Y}_{(0,\infty)})^{\varphi_{\omega}}$ and $\rigmot(\mathbf{FF}) \simeq \rigmot(\mathcal{Y}_{(0,\infty)})^{\varphi_{\omega}}$, and prove the equivalence above by restricting to the compact part; namely, it suffices to prove the equivalence $\mathcal{E} \circ \rgama{\mathrm{dR}, \breve{K}}^{\dagger,\varphi_{\omega}} \simeq \rgama{\mathrm{dR}, \mathcal{Y}_{(0,\infty)}}^{\dagger,\varphi_{\omega}} \circ e^{*}$ on $\rigmot(\breve{K})_{\omega}^{\varphi_{\omega}}$, where $e^{*}$ is the pullback of motives along the morphism defined in (\ref{eq:map-Y-breveK}). This can be deduced from the commutativity of the following diagram:
\[
\begin{tikzcd}
\rigmot(\breve{K})^{\varphi_{\omega}}_{\omega} \arrow[r] 
  \arrow[dd, "e^*"', out=180, in=180]
  & \bdcat_{\varphi}(\breve{K}) \arrow[dd, "\mathcal{E}", out=0, in=0] 
\\
{\rigmot(\mathcal{Y}_{(0,\infty]})^{\varphi_{\omega}}_{\omega}} 
  \arrow[u, "\simeq"'] \arrow[d] \arrow[r] 
  & {\qcoh(\mathcal{Y}_{(0,\infty]})^{\varphi_{\omega}}_{\omega}} \arrow[d] \arrow[u]
\\
{\rigmot(\mathcal{Y}_{(0,\infty)})^{\varphi_{\omega}}_{\omega}} \arrow[r]
  & {\qcoh(\mathcal{Y}_{(0,\infty)})_{\omega}^{\varphi_{\omega}}}
\end{tikzcd}
\]
The middle squares are commutative due to \cite[Proposition 5.3]{LBV23}, and the leftmost part is commutative due to Lemma \ref{lem:inf-pullback-to-Y}, whose proof also works to show the commutativity of the rightmost part; see Remark \ref{rmk:factorization-e}.
\end{proof}


\subsection{Functors From Module Categories}

Before establishing the comparison between the Hyodo-Kato and the de Rham-Fargues-Fontaine cohomology theories, we review some abstract formalism from \cite{HA}.

\subsubsection{Free Commutative Algebras}
Let $\mathcal{C}$ be a symmetric monoidal $\infty$-category. The forgetful
functor $\theta\colon \calg(\mathcal{C}) \to \mathcal{C}$ admits a
left adjoint (e.g., see \cite[Proposition 3.1.3.13]{HA})
\[
S\colon \mathcal{C} \to \calg(\mathcal{C})
\]
called \myemph{free commutative algebra functor}. Moreover, for each
$X \in \mathcal{C}$, we have
\[
\theta(S(X))=\coprod_{n \ge 0} \sym^n(X).
\]

\begin{eg}
  \label{eg:chi1-free}
  The commutative algebra $\chi {\mathbbm 1}$ in $\agmot(\bar{k})$ plays an important role in the constructions of realization functors. In fact, it is a free commutative algebra $\chi {\mathbbm 1} \simeq S({\mathbbm 1}(-1)[-1])$ by \cite[Theorem 3.13]{Ayo24} or \cite[Proposition 2.63]{BGV25}.
\end{eg}


\subsubsection{Base Change of Algebras}
Let $F \colon \opr{\mathcal{C}} \to \opr{\mathcal{D}}$ be a functor in $\calg(\prstc)$. We next use the identifications (from \cite[Proposition 3.4.1.3]{HA})
\[
\calg(\prstc)_{\opr{\mathcal{C}}/-} \simeq \calg(\module{\opr{\mathcal{C}}}(\prstc)), \quad  \calg(\prstc)_{\opr{\mathcal{D}}/-} \simeq \calg(\module{\opr{\mathcal{D}}}(\prstc)).
\]
There is a natural adjunction studied in \cite[\S 4.5.3]{HA}:
\[
F^{*} \colon \calg(\prstc)_{\opr{\mathcal{C}}/-} \rightleftarrows \calg(\prstc)_{\opr{\mathcal{D}}/-} \colon F_{*}.
\]
More precisely, $F_{*}$ is induced by composing with $F$ and $F^{*}$ sends $\opr{\mathcal{C}} \to\opr{\mathcal{E}}$ to $\opr{\mathcal{D}} \otimes_{\opr{\mathcal{C}}} \opr{\mathcal{E}}$.

Under our assumptions, we can identify $\module{A}(\mathcal{C})$ and $\mathcal{D}$ as objects in $\calg(\prstc)_{\mathcal{C}/-}$ via the free module functor (\cite[\S 4.2.4]{HA}) $\mathrm{Free} \colon \mathcal{C} \to \module{A}(\mathcal{C})$ and the monoidal functor $F \colon \mathcal{C} \to \mathcal{D}$, respectively.

\begin{pro}
  \label{pro:mapsp-functors}
  Let $F \colon \mathcal{C} \to \mathcal{D}$ be a functor in $\calg(\prstc)$ and $A \in \calg(\mathcal{C})$. Then there is a homotopy equivalence
  \[
\map_{\calg(\prstc)_{\mathcal{C}/-}} (\module{A}(\mathcal{C}), \mathcal{D}) \simeq \map_{\calg(\mathcal{D})}(FA, {\mathbbm 1}_{\mathcal{D}}).
\]
Furthermore, if $A=S(t)$ is a free algebra in $\mathcal{C}$, then we have a homotopy equivalence
\[
\map_{\calg(\prstc)_{\mathcal{C}/-}} (\module{A}(\mathcal{C}), \mathcal{D}) \simeq \map_{\mathcal{D}}(F(t), {\mathbbm 1}_{\mathcal{D}}).
\]
\end{pro}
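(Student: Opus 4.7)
The plan is to chain together three standard universal properties in the $2$-category $\calg(\prstc)$: the base-change adjunction along $F$, the identification of the base change of a module category as a module category, and the classification of algebra maps out of a free module category.

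First I would apply the adjunction $F^{*} \dashv F_{*}$ recalled just above the statement. Since $\mathcal{D}$ viewed as an object of $\calg(\prstc)_{\mathcal{D}/-}$ via the identity pushes forward to $\mathcal{D}$ viewed as an object of $\calg(\prstc)_{\mathcal{C}/-}$ via $F$, we obtain a homotopy equivalence
\[
\map_{\calg(\prstc)_{\mathcal{C}/-}}(\module{A}(\mathcal{C}), \mathcal{D}) \simeq \map_{\calg(\prstc)_{\mathcal{D}/-}}(F^{*} \module{A}(\mathcal{C}), \mathcal{D}).
\]
Next I would invoke the base-change formula $F^{*} \module{A}(\mathcal{C}) \simeq \mathcal{D} \otimes_{\mathcal{C}} \module{A}(\mathcal{C}) \simeq \module{FA}(\mathcal{D})$, which is a standard consequence of the fact that the free-module functor is compatible with symmetric monoidal base change (see \cite[\S 4.5.3]{HA}).

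The third step is to use the universal property of $\module{B}(\mathcal{D})$ among $\mathcal{D}$-linear symmetric monoidal $\infty$-categories: namely, for any $\mathcal{E} \in \calg(\prstc)_{\mathcal{D}/-}$, mapping out of $\module{B}(\mathcal{D})$ in $\calg(\prstc)_{\mathcal{D}/-}$ is the same as giving a $\mathcal{D}$-algebra map from $B$ to the image of the unit in $\mathcal{E}$. Taking $\mathcal{E}=\mathcal{D}$ with the identity structure map and $B=FA$, this yields
\[
\map_{\calg(\prstc)_{\mathcal{D}/-}}(\module{FA}(\mathcal{D}), \mathcal{D}) \simeq \map_{\calg(\mathcal{D})}(FA, {\mathbbm 1}_{\mathcal{D}}),
\]
which concludes the first equivalence. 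For the second assertion, when $A=S(t)$ is a free commutative algebra, the monoidality of $F$ gives $FA \simeq FS(t) \simeq S(F(t))$ in $\calg(\mathcal{D})$ because $F$ preserves the relevant colimits and symmetric powers; then the free-algebra adjunction $S \dashv \theta$ from the beginning of \S 3.3 provides
\[
\map_{\calg(\mathcal{D})}(S(F(t)), {\mathbbm 1}_{\mathcal{D}}) \simeq \map_{\mathcal{D}}(F(t), {\mathbbm 1}_{\mathcal{D}}).
\]

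I expect the main obstacle to be justifying step three at the precise level of $\infty$-categorical universal properties. Everything ultimately reduces to functoriality of $\module{(-)}(-)$ and the behavior of symmetric monoidal functors with respect to free algebras and base change — these are standard results from \cite[\S 3.4, \S 4.5]{HA}, so the proof is essentially a matter of assembling the correct references rather than a genuine computation.
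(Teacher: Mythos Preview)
Your proposal is correct and follows essentially the same three-step structure as the paper's proof: the $(F^*,F_*)$ adjunction, the base-change identification $F^*\module{A}(\mathcal{C})\simeq\module{FA}(\mathcal{D})$ (the paper cites \cite[Theorem~4.8.4.6]{HA}), and the full faithfulness of $B\mapsto\module{B}(\mathcal{D})$ (the paper cites \cite[Corollary~4.8.5.21]{HA}). The only cosmetic difference is in the free-algebra step: you use $F\circ S\simeq S\circ F$ and the adjunction in $\mathcal{D}$, whereas the paper routes through the right adjoint $G$ of $F$ and the adjunction in $\mathcal{C}$; both are equally valid.
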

\begin{proof}
  In fact, here $\mathcal{D}= F_{*} \mathcal{D}$ in $\calg(\prstc)_{\mathcal{C}/-}$; thus, we have
\begin{align*}
  \map_{\calg(\prstc)_{\mathcal{C}/-}}(\module{A}(\mathcal{C}), \mathcal{D}) &\simeq \map_{\calg(\prstc)_{\mathcal{D}/-}}(F^{*} \module{A}(\mathcal{C}), \mathcal{D})\\
  &\simeq \map_{\calg(\prstc)_{\mathcal{D}/-}} (\module{FA}(\mathcal{D}), \mathcal{D})
\end{align*}
here the last homotopy equivalence follows from
\[
F^{*} \module{A}(\mathcal{C}) \simeq \mathcal{D}\otimes_{\mathcal{C}} \module{A}(\mathcal{C}) \simeq \module{FA}(\mathcal{D})
\]
by \cite[Theorem 4.8.4.6]{HA}. Therefore, we conclude the first assertion from the full faithfulness of the functor
\begin{align*}
  \calg(\mathcal{D}) &\hookrightarrow \calg(\prstc)_{\mathcal{D}/-}\\
  A&\mapsto \module{A}(\mathcal{D});
\end{align*}
see \cite[Corollary 4.8.5.21]{HA}. For the last assertion, we use the universal property of free algebras:
\begin{align*}
  \map_{\calg(\mathcal{D})}(FA, {\mathbbm 1}_{\mathcal{D}})&\simeq \map_{\calg(\mathcal{C})}(A, G({\mathbbm 1}_{\mathcal{D}}))\\
                                                           &\simeq \map_{\mathcal{C}}(t, G({\mathbbm 1}_{\mathcal{D}}))\\
  &\simeq \map_{\mathcal{D}}(F(t), {\mathbbm 1}_{\mathcal{D}})
\end{align*}
where $G$ is the right adjoint of $F$.
\end{proof}

\subsection{Monoidal Comparisons of Realization Functors}
\label{subsec:monoidal-comparison}
We have defined two kinds of realization functors: the de Rham-Fargues-Fontaine realization \eqref{eq:drFF} and the Hyodo-Kato realization \eqref{eq:HK-coh}. To compare these two realization functors, we can use the functor $\mathcal{E} \colon \dcat_{\varphi}(\breve{K}) \to \qcoh(\mathbf{FF})$ (see \eqref{eq:E-functor}) to refine the Hyodo-Kato realization so that it takes values in the same target category. More precisely, we can define a functor in $\calg(\prstc)$
\[
\mathcal{E}_N \colon \dcat_{(\varphi,N)}(\breve{K}) \to \dcat_{\varphi}(\breve{K}) \xrightarrow{\mathcal{E}} \qcoh(\mathbf{FF})
\]
where the first functor is forgetting\footnote{In fact, the vector bundles associated to $(\varphi,N)$-modules only depend on the underlying $\varphi$-modules; see \cite[Proposition 10.3.3]{thecurve}.} the monodromy operator.

\begin{thm}
  \label{thm:FFHK-comp}
 There is a unique (up to homotopy) functor $F \colon \rigmot(C) \to \qcoh(\mathbf{FF})$ in $\calg(\prstc)$ satisfying $F \circ \xi_{C} \simeq \mathcal{E} \circ \rgama{\mathrm{rig}}$. In particular, we have an equivalence $\rgama{\mathrm{FF}} \simeq \mathcal{E}_N \circ \rgama{\mathrm{HK}}$ in $\calg(\prstc)$.
\end{thm}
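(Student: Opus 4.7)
The plan is to exploit the universal property of $\rigmot(C) \simeq \module{\chi {\mathbbm 1}}(\agmot(\bar{k}))$ as a module $\infty$-category over $\agmot(\bar{k})$, thereby reducing the uniqueness of $F$ to a single vanishing computation for a mapping spectrum on the Fargues-Fontaine curve. Both $\rigmot(C)$ (via $\xi_C$) and $\qcoh(\mathbf{FF})$ (via $\mathcal{E} \circ \rgama{\mathrm{rig}}$) are naturally objects of the slice category $\calg(\prstc)_{\agmot(\bar{k})/-}$, and the space of extensions $F$ appearing in the statement is precisely
\[
\map_{\calg(\prstc)_{\agmot(\bar{k})/-}}(\rigmot(C), \qcoh(\mathbf{FF})).
\]

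Since $\chi {\mathbbm 1} \simeq S({\mathbbm 1}(-1)[-1])$ is a free commutative algebra (Example \ref{eg:chi1-free}), the second clause of Proposition \ref{pro:mapsp-functors}, applied with $\mathcal{C} = \agmot(\bar{k})$, $\mathcal{D} = \qcoh(\mathbf{FF})$, and $F = \mathcal{E} \circ \rgama{\mathrm{rig}}$, identifies this mapping space with
\[
\map_{\qcoh(\mathbf{FF})}(\mathcal{O}_{\mathbf{FF}}(-1)[-1], \mathcal{O}_{\mathbf{FF}}),
\]
where I use $\rgama{\mathrm{rig}}({\mathbbm 1}(-1)) \simeq \breve{K}(-1)$ together with $\mathcal{E}(\breve{K}(-1)) = \mathcal{O}_{\mathbf{FF}}(-1)$. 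To conclude uniqueness up to homotopy, it suffices that this space is connected; its $\pi_0$ equals $\pi_{-1} \map(\mathcal{O}_{\mathbf{FF}}(-1), \mathcal{O}_{\mathbf{FF}})$, and this vanishes by Proposition \ref{pro:curve-twist} since $-1 \le 0$.

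For existence I would exhibit the two natural candidates. Proposition \ref{pro:drFF-rig} gives $\rgama{\mathrm{FF}} \circ \xi_C \simeq \mathcal{E} \circ \rgama{\mathrm{rig}}$ directly. For the other, combining Proposition \ref{pro:HK-comp-rig} ($\pi \circ \rgama{\mathrm{HK}} \circ \xi_C \simeq \rgama{\mathrm{rig}}$) with the tautological factorization $\mathcal{E}_N = \mathcal{E} \circ \pi$ yields $\mathcal{E}_N \circ \rgama{\mathrm{HK}} \circ \xi_C \simeq \mathcal{E} \circ \rgama{\mathrm{rig}}$. Both functors lie in $\calg(\prstc)$ by construction and hence define objects of the slice category; uniqueness then identifies them up to homotopy, proving the ``in particular'' statement.

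The main subtlety, and the reason the over-category is needed, is that the unrefined mapping space $\map_{\calg(\prstc)}(\rigmot(C), \qcoh(\mathbf{FF}))$ is far from connected: the refined computation above in fact produces $\pi_1 = B^{\varphi = p}$, and this is the source of the non-canonicity phenomena emphasized elsewhere in the paper. Compatibility with the rigid realization on the special fiber is precisely the additional datum that rigidifies the comparison into a single homotopy class.
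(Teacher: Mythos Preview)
Your proposal is correct and follows essentially the same route as the paper: both reduce to the slice category $\calg(\prstc)_{\agmot(\bar{k})/-}$, invoke Proposition~\ref{pro:mapsp-functors} via the free-algebra description $\chi{\mathbbm 1}\simeq S({\mathbbm 1}(-1)[-1])$, and conclude by the vanishing of $\ext^1(\mathcal{O}_{\mathbf{FF}}(-1),\mathcal{O}_{\mathbf{FF}})$ from Proposition~\ref{pro:curve-twist}, then appeal to Propositions~\ref{pro:drFF-rig} and~\ref{pro:HK-comp-rig} for the two candidate extensions. Your closing remark on $\pi_1 \simeq B^{\varphi=p}$ exactly matches Remark~\ref{rmk:monoidal-eq-noncan}.
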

\begin{proof}
  The Monsky-Washnitzer functor $\xi_C \colon \agmot(\bar{k}) \to \rigmot(C)$ and the functor $\mathcal{E} \circ \rgama{\mathrm{rig}}$ define two objects $\rigmot(C)$ and $\qcoh(\mathbf{FF})$ in $\calg(\prstc)_{\agmot(\bar{k})/-}$. To show the uniqueness of $F$, it suffices to compute
  \begin{equation}
    \label{eq:pi0-functors}
    \pi_0 \map_{\calg(\prstc)_{\agmot(\bar{k})/-}} \left( \rigmot(C), \qcoh(\mathbf{FF}) \right)
  \end{equation}
  We now identify $\rigmot(C)$ with $\module{\chi {\mathbbm 1}}(\agmot(\bar{k}))$ and use the identification $\chi {\mathbbm 1} \simeq {\mathbbm 1} \oplus {\mathbbm 1}(-1)[-1]$ in Proposition \ref{pro:chi-unit-formula}. Using Proposition~\ref{pro:mapsp-functors}, we know that \eqref{eq:pi0-functors} is computed by
\begin{align*}
  \pi_0 \map_{\qcoh(\mathbf{FF})}(F({\mathbbm 1}(-1))[-1], \mathcal{O}_{\mathbf{FF}})&\simeq \pi_0 \map_{\qcoh(\mathbf{FF})} (\mathcal{E} \circ \rgama{\mathrm{rig}}({\mathbbm 1}(-1))[-1], \mathcal{O}_{\mathbf{FF}})\\
                                                                                     &\simeq \pi_0 \map_{\qcoh(\mathbf{FF})} (\mathcal{O}_{\mathbf{FF}}(-1)[-1], \mathcal{O}_{\mathbf{FF}})\\
  &\simeq \ext^1(\mathcal{O}_{\mathbf{FF}}(-1), \mathcal{O}_{\mathbf{FF}}) \simeq 0,
\end{align*}
where the last equivalence is Proposition \ref{pro:curve-twist}.

For the last assertion, it follows from the compatibilities of these two functors with the rigid realization; see Proposition \ref{pro:drFF-rig} and Proposition \ref{pro:HK-comp-rig}.
\end{proof}

\begin{rmk}
  \label{rmk:indpend-psu}
  In defining the Hyodo-Kato realization $\rgama{\mathrm{HK},C}$, we choose the canonical pseudo-uniformizer $p$ of $\mathcal{O}_{C}$. In fact, different choices of pseudo-uniformizer only change the monodromy operators. However, after applying the $\mathcal{E}$ functor, they yield the same solid quasi-coherent sheaves on $\mathbf{FF}$. Therefore, the comparison result in Theorem \ref{thm:FFHK-comp} is independent of the choice of pseudo-uniformizer.
\end{rmk}

\begin{rmk}
  \label{rmk:monoidal-eq-noncan}
  The previous theorem shows that we can find a monoidal natural
  equivalence between the de Rham Fargues-Fontaine realization and the
  Hyodo-Kato realization. But this natural equivalence is not
  unique: there is a big space
  \[
\pi_1 \map_{\calg(\prstc)_{\agmot(\bar{k})/-}}(\rigmot(C) , \qcoh(\mathbf{FF})) \simeq \pi_0 \map(\mathcal{O}_{\mathbf{FF}}(-1), \mathcal{O}_{\mathbf{FF}}) \simeq B^{\varphi=p}.
\]
of choices for the monoidal natural transformations.
\end{rmk}

\subsubsection{A New Filtration for Vector Bundles on $\mathbf{FF}$}
As an application of this comparison result, we can construct a new filtration on the vector bundles arose as the de Rham-Fargues-Fontaine cohomology.

Recall from \cite[\S 3.3]{BGV25} there is a bounded weight structure (which is a dual notion of $t$-structure defined in \cite{Bon10a}) on $\rigmot(C)_{\omega} \simeq \rigmot_{\mathrm{gr}}(C)_{\omega}$. Let $\mathcal{H}$ denote the heart of this weight structure. Then we have a functor in $\calg(\catinf^{\mathrm{ex}})$ (see \cite{Aokwt,BGV25})
\[
W \colon \rigmot(C)_{\omega} \to \bkch(\htcat \mathcal{H})
\]
where $\bkch(\htcat \mathcal{H})$ is the stable $\infty$-category of (homological) bounded chain complexes in the additive category $\htcat \mathcal{H}$. This functor is called the \myemph{weight complex functor}.

As a consequence of Theorem \ref{thm:FFHK-comp}, we know that the de Rham-Fargues-Fontaine realization factors through the weight complex functor on $\rigmot(C)$:

\begin{cor}
  \label{cor:wt-fil-ff}
The de Rham-Fargues-Fontaine realization on the compact motives over $C$ factors through the weight complex functor. In other words, we have a commutative diagram
  \[
\begin{tikzcd}
\rigmot(C)_{\omega} \arrow[r, "\rgama{\mathrm{FF}}"] \arrow[d, "W"']            & \mathbf{Perf}(\mathbf{FF}) \\
\bkch(\htcat \mathcal{H}) \arrow[ru, "\widetilde{\rgama{}}_{\mathrm{FF}}"', dashed] &                           
\end{tikzcd}
\]
in $\calg(\catinf^{\mathrm{ex}})$.
\end{cor}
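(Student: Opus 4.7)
The plan is to reduce the factorization for $\rgama{\mathrm{FF}}$ to a factorization for the Hyodo--Kato realization, which should follow from the weight-structure formalism already developed in BGV25. By Theorem \ref{thm:FFHK-comp} there is a monoidal equivalence $\rgama{\mathrm{FF}} \simeq \mathcal{E}_N \circ \rgama{\mathrm{HK}}$ in $\calg(\prstc)$. Restricting to compact motives gives a functor $\rgama{\mathrm{FF}}|_\omega \simeq \mathcal{E}_N \circ \rgama{\mathrm{HK}}|_\omega$ in $\calg(\catinf^{\mathrm{ex}})$, so it suffices to factor $\mathcal{E}_N \circ \rgama{\mathrm{HK}}|_\omega$ through the weight complex functor $W$.

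The key intermediate step is to verify that $\rgama{\mathrm{HK}}|_\omega$ itself is weight-exact, with target $\bdcat_{(\varphi,N)}(\breve{K})$ carrying the natural bounded weight structure whose heart is the category of pure $(\varphi,N)$-modules over $\breve{K}$. Concretely, this amounts to showing that the Hyodo--Kato realization of a weight-pure motive of weight $i$ is a pure $(\varphi,N)$-module of weight $i$, a fact which is essentially built into the construction of the weight structure on $\rigmot(C)_\omega$ in BGV25. Once weight-exactness is granted, the universal property of the weight complex functor, in its symmetric monoidal refinement from Aokwt and BGV25, produces a unique monoidal extension
\[
\widetilde{\rgama{}}_{\mathrm{HK}}\colon \bkch(\htcat\mathcal{H}) \to \bdcat_{(\varphi,N)}(\breve{K})
\]
in $\calg(\catinf^{\mathrm{ex}})$ together with an equivalence $\rgama{\mathrm{HK}}|_\omega \simeq \widetilde{\rgama{}}_{\mathrm{HK}} \circ W$.

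Composing with $\mathcal{E}_N$ then yields the corollary: one sets $\widetilde{\rgama{}}_{\mathrm{FF}} := \mathcal{E}_N \circ \widetilde{\rgama{}}_{\mathrm{HK}}$ and obtains $\rgama{\mathrm{FF}}|_\omega \simeq \widetilde{\rgama{}}_{\mathrm{FF}} \circ W$ in $\calg(\catinf^{\mathrm{ex}})$. The hard part will be the monoidal weight-exactness of $\rgama{\mathrm{HK}}$ together with the monoidal form of the universal property of $W$; once both are in hand (and both should be explicitly available in BGV25/Aokwt), the rest is formal. As a bonus, the resulting additive functor $\widetilde{\rgama{}}_{\mathrm{FF}}$ will send a pure weight $i$ object of $\htcat\mathcal{H}$ to a vector bundle of slope $i/2$ on $\mathbf{FF}$ by Proposition \ref{pro:pure-isoc-half-slope}, which is precisely the input needed to power the slope filtration appearing in Corollary \ref{cor:fil-vbFF} and Corollary \ref{cor:fil-FFcoh-space}.
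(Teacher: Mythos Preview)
Your proposal is correct and follows essentially the same route as the paper: reduce via Theorem~\ref{thm:FFHK-comp} to factoring the Hyodo--Kato realization through $W$, and cite BGV25 for that factorization. The paper simply invokes \cite[Theorem 4.53, Corollary 4.54]{BGV25} directly rather than re-deriving the weight-exactness and universal-property argument you sketch, but the logic is the same.
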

\begin{proof}
The Hyodo-Kato realization functor has already factored through the weight complex functor by \cite[Theorem 4.53, Corollary 4.54]{BGV25}:
  \[
\rigmot(C)_{\omega} \xrightarrow{\rgama{\mathrm{HK}}} \bdcat_{(\varphi,N)}(\breve{K}) \to \bdcat_{\varphi}(\breve{K}).
\]
Therefore, we conclude from the comparison result (Theorem \ref{thm:FFHK-comp}).
\end{proof}

\begin{cor}
  \label{cor:fil-vbFF}
For every compact adic \'{e}tale motive $M$ over $C$, there is a convergent spectral sequence starting from the first page and degenerating at the second page:
  \[
E_{pq}^1=H_{q} \rgama{\mathrm{FF}}(W_pM) \Rightarrow H_{p+q} \rgama{\mathrm{FF}}(M)
\]
where $W_{\bullet}M$ is the weight complex of $M$. In particular, $H_{n} \rgama{\mathrm{FF}}(M)$ has a finite increasing filtration $\mathrm{Fil}_{\bullet}$ whose $i$-th graded piece is of slope $(i-n)/2$.
\end{cor}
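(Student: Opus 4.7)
The plan is to extract the spectral sequence from the factorization established in Corollary~\ref{cor:wt-fil-ff}, namely $\rgama{\mathrm{FF}}|_{\rigmot(C)_{\omega}} \simeq \widetilde{\rgama{}}_{\mathrm{FF}} \circ W$, and then to identify the slopes of the graded pieces via the purity statement Proposition~\ref{pro:pure-isoc-half-slope}.

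First I would construct the spectral sequence in the standard way. The bounded chain complex $W(M) \in \bkch(\htcat \mathcal{H})$ carries its canonical column filtration, which transports through the exact functor $\widetilde{\rgama{}}_{\mathrm{FF}}$ to a bounded filtration on $\rgama{\mathrm{FF}}(M)$. This yields the convergent spectral sequence
\[
E^1_{p,q} = H_q \rgama{\mathrm{FF}}(W_p M) \Rightarrow H_{p+q} \rgama{\mathrm{FF}}(M),
\]
and boundedness of $W(M)$ forces the resulting filtration on each $H_n \rgama{\mathrm{FF}}(M)$ to be finite.

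Next I would identify the slopes of the $E^1$-terms. By Theorem~\ref{thm:FFHK-comp}, $\rgama{\mathrm{FF}} \simeq \mathcal{E}_N \circ \rgama{\mathrm{HK}}$. The weight-compatibility of the Hyodo-Kato realization established in \cite{BGV25} implies that, for every $N \in \mathcal{H}$, the $\varphi$-module $H_q \rgama{\mathrm{HK}}(N)$ is pure of weight $-q$. Proposition~\ref{pro:pure-isoc-half-slope} then shows that such a $\varphi$-module is semistable of slope $-q/2$, and applying $\mathcal{E}$ identifies $E^1_{p,q}$ with a semistable vector bundle on $\mathbf{FF}$ of slope $-q/2$, which over $\bar{k}$ is necessarily of the form $\mathcal{O}_{\mathbf{FF}}(-q/2)^{\oplus r}$.

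Finally I would prove degeneration at $E^2$. Since semistable vector bundles of fixed slope on $\mathbf{FF}$ form an abelian subcategory (closed under kernels, cokernels, and images), each $E^r_{p,q}$ remains semistable of slope $-q/2$. The differential $d^r \colon E^r_{p,q} \to E^r_{p-r,\,q+r-1}$ for $r \ge 2$ is then a morphism between semistable bundles of slopes $-q/2$ and $-q/2-(r-1)/2$, with the target having strictly smaller slope, so Proposition~\ref{pro:curve-twist} forces $d^r = 0$. Hence $E^2 = E^{\infty}$, and $\mathrm{gr}_p H_n \rgama{\mathrm{FF}}(M) = E^{\infty}_{p,\,n-p}$ is semistable of slope $(p-n)/2$, as required. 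The main obstacle I expect is the bookkeeping of weight conventions: one must verify, in the setup of \cite{BGV25}, that $H_q \rgama{\mathrm{HK}}$ sends $\mathcal{H}$ to $\varphi$-modules pure of weight exactly $-q$, so that Proposition~\ref{pro:pure-isoc-half-slope} delivers the precise slope shift $(p-n)/2$ stated in the corollary rather than an off-by-constant variant.
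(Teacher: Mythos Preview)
Your argument is correct and follows essentially the same route as the paper: extract the spectral sequence from the weight filtration, identify the $E^1$-terms as semistable of slope $-q/2$ via purity (Proposition~\ref{pro:pure-isoc-half-slope}), and kill the higher differentials using Proposition~\ref{pro:curve-twist}. The only cosmetic difference is that the paper invokes the rigid comparison (Proposition~\ref{pro:drFF-rig}) together with the description $W_pM \simeq \xi_C Y_p$ for a Chow motive $Y_p$ over $\bar{k}$, whereas you go through the Hyodo--Kato comparison (Theorem~\ref{thm:FFHK-comp}); these are equivalent via Proposition~\ref{pro:HK-comp-rig}, and your degeneration argument (treating all $d^r$ for $r\ge 2$ and invoking closure of semistable bundles of fixed slope under subquotients) is in fact slightly more explicit than the paper's, which writes out only $d^2$.
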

\begin{proof}
  This spectral sequence is a special case of \cite[Proposition 1.2.2.14]{HA} applied to the naive filtration of the weight complex $W_{\bullet}M$ by truncations. We are left to show it degenerates at the second page and the slopes of $E_{pq}^{2}$.

  From \cite[Proposition 4.25]{BGV25}, each $W_pM$ has a form of $\xi_C Y_p$ where $Y_p$ is a Chow motive over $\bar{k}$. Using Proposition \ref{pro:drFF-rig}, we know that
  \[
H_q \rgama{\mathrm{FF}}(W_p M) \simeq \mathcal{E}(H_q \rgama{\mathrm{rig}}(Y_{p}))
\]
where $H_q \rgama{\mathrm{rig}}(Y_{p})$ is of weight $-q$ by \cite[Proposition 4.44]{BGV25}. Thus, $H_q \rgama{\mathrm{FF}}(W_pM)$ has the slope of $-q/2$ by Proposition \ref{pro:pure-isoc-half-slope} since $\mathcal{E}$ preserves the slopes by our definition of Tate twists of $\varphi$-modules. Therefore, $E_{pq}^{2}$ is also of slope $-q/2$; hence the differential map $E_{pq}^2 \to E_{p-2,q+1}^{2}$ vanishes by Proposition~\ref{pro:curve-twist}. This proves that the spectral sequence degenerates at the second page. In particular, the $i$-th graded piece of the induced filtration on $H_{n} \rgama{\mathrm{FF}}(M)$ is $E_{i,n-i}^{\infty}\simeq E_{i,n-i}^{2}$, whose slope is $(i-n)/2$.
\end{proof}

Given a compact motive $M$ over $C$, define $\mathcal{H}_{\mathrm{FF}}^i(M):= H_{-i} \rgama{\mathrm{FF}}(M^{\vee})$ because we are using the covariant cohomological realization functor. In the special case where $M$ is the associated motive of a smooth quasi-compact rigid analytic space over $C$, we write it simply by $\mathcal{H}_{\mathrm{FF}}^i(X)$. This is a vector bundle on the Fargues-Fontaine curve.

\begin{cor}
  \label{cor:fil-FFcoh-space}
  Let $X$ be a smooth quasi-compact rigid analytic variety over $C$. Then, for each $n \ge 0$, the vector bundle $\mathcal{H}_{\mathrm{FF}}^n(X)$ has a finite increasing filtration $\mathrm{Fil}_{\bullet}$ whose $i$-th graded piece is of slope $(i+n)/2$.
\end{cor}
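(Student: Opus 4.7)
The plan is to deduce this corollary as a direct specialization of Corollary \ref{cor:fil-vbFF}, since the statement is essentially a translation of that result through the dualization convention built into the definition of $\mathcal{H}_{\mathrm{FF}}^{n}(X)$.

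First, I would observe that a smooth quasi-compact rigid analytic variety $X$ over $C$ defines a compact object $M(X) \in \rigmot(C)_{\omega}$, and compactness is preserved by dualization (since compact motives are dualizable by Riou's result cited earlier in the paper). Thus $M:= M(X)^{\vee}$ is itself a compact adic \'{e}tale motive over $C$, which is the regime in which Corollary \ref{cor:fil-vbFF} applies.

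Next, I would apply Corollary \ref{cor:fil-vbFF} to this $M$: it yields a convergent spectral sequence degenerating at $E^{2}$, whose abutment $H_{m} \rgama{\mathrm{FF}}(M(X)^{\vee})$ is endowed with a finite increasing filtration $\mathrm{Fil}_{\bullet}$ whose $i$-th graded piece is a vector bundle on $\mathbf{FF}$ of slope $(i-m)/2$. Specializing to $m=-n$ and invoking the definition
\[
\mathcal{H}_{\mathrm{FF}}^{n}(X) = H_{-n} \rgama{\mathrm{FF}}(M(X)^{\vee})
\]
introduced just before the statement, the induced filtration has $i$-th graded piece of slope $(i-(-n))/2 = (i+n)/2$, which is exactly the claim.

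There is essentially no obstacle here beyond matching conventions: the substantive content, namely degeneration of the spectral sequence at $E^{2}$ and the slope computation of the graded pieces, is already encoded in Corollary \ref{cor:fil-vbFF}, which in turn rests on the factorization through the weight complex functor (Corollary \ref{cor:wt-fil-ff}) and the slope-versus-weight identity of Proposition \ref{pro:pure-isoc-half-slope}. The only point worth spelling out is that $\mathcal{H}_{\mathrm{FF}}^{n}(X)$ is genuinely a vector bundle rather than merely a perfect complex; this is automatic from the fact that each graded piece $\mathcal{E}(H_{n-i}\rgama{\mathrm{rig}}(Y_{-i}))$ appearing in the associated graded is a vector bundle (by construction of $\mathcal{E}$) and that extensions of vector bundles on $\mathbf{FF}$ remain vector bundles.
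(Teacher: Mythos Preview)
Your proposal is correct and matches the paper's approach: the paper states Corollary~\ref{cor:fil-FFcoh-space} without proof, immediately after the definition $\mathcal{H}_{\mathrm{FF}}^i(M):= H_{-i}\rgama{\mathrm{FF}}(M^{\vee})$, so it is meant to be read exactly as you do---apply Corollary~\ref{cor:fil-vbFF} to $M(X)^{\vee}$ and substitute $m=-n$. Your extra remark on why $\mathcal{H}_{\mathrm{FF}}^n(X)$ is a vector bundle is a welcome clarification of something the paper asserts without argument.
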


\begin{rmk}
  \label{rmk:not-HN-fil}
  The filtration in Corollary \ref{cor:fil-FFcoh-space} is a new filtration on the vector bundles on the Fargues-Fontaine curve, different from the Harder–Narasimhan filtration as graded slopes are increasing. Even more, the set of slopes with respect to this new filtration does not agree with the set of Harder-Narasimhan slopes.
\end{rmk}

\subsection{The Uniqueness of Comparison Equivalences}
\label{sec:uniq-comp-isom}
As noted in Remark \ref{rmk:monoidal-eq-noncan}, the comparison natural isomorphism
\[
\rgama{\mathrm{FF}} \simeq \mathcal{E}_N \circ \rgama{\mathrm{HK}}
\]
is not unique; in fact, the space of such natural isomorphisms is large. To single out a unique good comparison isomorphism, we keep track of the Galois actions. More precisely, we work with a refined coefficient category: the $\infty$-category of $G_{\breve{K}}$-equivariant solid quasi-coherent sheaves on the Fargues-Fontaine curve; here $G_{\breve{K}}=\gal (C/\breve{K})$ is the absolute Galois group of $\breve{K}$.

We define this $\infty$-category as follows: the action of $G_{\breve{K}}$ on $A_{\mathrm{inf}}=W(\mathcal{O}_{C^{\flat}}) $ induces an action on the Fargues-Fontaine $\mathbf{FF}$ by its construction. Thus, pullbacks along these Galois-automorphisms give a $G_{\breve{K}}$-action on $\mathbf{FF}$, which induces a $G_{\breve{K}}$-action on the $\infty$-category $\qcoh(\mathbf{FF})$. In other words, we have a functor
\begin{align}
  \label{eq:G-on-qcFF}
  \begin{split}
      \subss{B}G_{\breve{K}} &\to \calg(\prstc)\\
  *&\mapsto \qcoh(\mathbf{FF})\\
  (g \in G_{\breve{K}})&\mapsto (\sigma_g^{*} \colon \qcoh(\mathbf{FF}) \to \qcoh(\mathbf{FF})).
  \end{split}
\end{align}
We then define the $\infty$-category of \myemph{$\boldsymbol{G_{\breve{K}}}$-equivariant solid quasi-coherent sheaves} on $\mathbf{FF}$ as the limit of the diagram above.

Next, we explain how to enhance the target of $\rgama{\mathrm{FF}}$ and $\mathcal{E}_n \circ \rgama{\mathrm{HK}}$.

\subsubsection{From $\boldsymbol{\varphi}$-modules to $\qcoh(\mathbf{FF})^{\htcat G_{\breve{K}}}$}

We can enrich the target of $\mathcal{E} $ in \eqref{eq:E-functor}: the definition of the morphism $e$ \eqref{eq:map-Y-breveK} shows $e$ is stable under $G_{\breve{K}}$, and the Frobenius map is compatible with the $G_{\breve{K}}$-action; that is, for every $g \in G_{\breve{K}} $, we have $e \circ \sigma_g = e$ and $\varphi \circ \sigma_g= \sigma_g \circ \varphi$. This implies that, for each $g \in G_{\breve{K}}$, there is a functor $\sigma^{*}_g \colon \mathbf{Perf}(\mathcal{Y}_{(0,\infty)})^{\varphi} \to \mathbf{Perf}(\mathcal{Y}_{(0,\infty)})^{\varphi}$ and $\mathcal{E}$ is stable under these functors, i.e., $\sigma^{*}_g \circ \mathcal{E} \simeq \sigma_g^{*}$. Thus, we get a refinement of $\mathcal{E}$ in $\calg(\prstc)$
\begin{equation}
  \label{eq:gal-E-functor}
  \mathcal{E}^{\mathrm{ari}} \colon \dcat_{\varphi}(\breve{K}) \to \qcoh(\mathbf{FF})^{\htcat G_{\breve{K}}}
\end{equation}
Clearly, the underlying solid quasi-coherent sheaves of $\mathcal{E}^{\mathrm{ari}}$ are given by the functor $\mathcal{E}$. As before, we define $\mathcal{E}_N^{\mathrm{ari}}:= \mathcal{E}^{\mathrm{ari}} \circ \pi$.

\subsubsection{Refinement of the de Rham Fargues-Fontaine Realization}

To get a Galois refinement of the de Rham-Fargues-Fontaine realization, we have to enhance each rigid analytic motive over $C$ into a $G_{\breve{K}}$-equivariant rigid analytic motive over $C$. To this end, we start from the natural functor
\begin{equation}
  \label{eq:K-to-galC}
  \rigmot_{\mathrm{gr}}(\breve{K}) \to \rigmot(C)^{\htcat G_{\breve{K}}}
\end{equation}
induced by the base change functor $\rigmot_{\mathrm{gr}}(K) \to \rigmot(C)$, here $\rigmot(C)^{\htcat G_{\breve{K}}}$ is the limit of the $G_{\breve{K}}$-action on $\rigmot(C)$. More precisely, the action of $G_{\breve{K}}$ on $C$ yields a $G_{\breve{K}}$-action on $\rigmot(C)$ that is a functor
\[
\subss{B} G_{\breve{K}} \to \calg(\prstc)
\]
sending the point to $\rigmot(C)$, similarly to \eqref{eq:G-on-qcFF}, and $\rigmot(C)^{\htcat G_{\breve{K}}}$ is the limit of this diagram. Since the base change $\rigmot_{\mathrm{gr}}(\breve{K}) \to \rigmot(C)$ is stable under this $G_{\breve{K}}$-action, we obtain (\ref{eq:K-to-galC}).

On the other hand, since $C/\breve{K}$ is totally ramified, the base change $\rigmot_{\mathrm{gr}}(\breve{K}) \xrightarrow{\simeq} \rigmot_{\mathrm{gr}}(C)$ is an equivalence by \cite[Proposition 3.23]{BKV25} and the latter category is equivalent to $\rigmot(C)$ via the natural embedding, as shown in \cite[Theorem 3.7.21]{AGV22}. Thus, the base change $\rigmot_{\mathrm{gr}}(K) \xrightarrow{\simeq} \rigmot(C)$ is indeed an equivalence of $\infty$-categories. Replacing $\rigmot_{\mathrm{gr}}(\breve{K})$ by $\rigmot(C)$ in \eqref{eq:K-to-galC} using this equivalence, we get a Galois-enhanced functor
\begin{equation}
  \label{eq:gal-enrich}
\alpha \colon  \rigmot(C) \to \rigmot(C)^{\htcat G_{\breve{K}}}.
\end{equation}

\begin{rmk}
  \label{rmk:galenhance}
  The intuition of (\ref{eq:gal-enrich}) is that, for every rigid analytic motive $M$ over $C$, we can find a unique (up to homotopy) model $\tilde{M}$ in $\rigmot_{\mathrm{gr}}(K)$; that is, we have an isomorphism $M \simeq \tilde{M}_C$ which gives the natural $G_{\breve{K}}$-action on $M$.

  More generally, let $F/L$ be a Galois extension of complete non-archimedean fields with residue fields $k_F$ and $k_L$ respectively. Let $I_{F/L}$ be the inertial group of this extension. Using the same argument, we can obtain the inertial enrichment:
  \[
\rigmot_{\mathrm{gr}}(F) \to \rigmot_{\mathrm{gr}}(F)^{\htcat I_{F/L}}.
\]
Here the $G_{F/L}$-action (hence $I_{F/L}$-action) on $\rigmot(F)$ can be restricted to $\rigmot_{\mathrm{gr}}(F)$ due to \cite[Proposition 3.1.13]{AGV22}.
\end{rmk}

\begin{lem}
  \label{lem:gal-from-model}
The composite functor $\rigmot(C) \xrightarrow{\alpha} \rigmot(C)^{\htcat G_{\breve{K}}} \to \rigmot(C)$, where $\alpha$ is defined in (\ref{eq:gal-enrich}) and the second map is the canonical projection, is the identity functor (up to homotopy).
\end{lem}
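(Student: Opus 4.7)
The strategy is to unwind the construction of $\alpha$ in \eqref{eq:gal-enrich} and to invoke the universal property of the limit defining $\rigmot(C)^{\htcat G_{\breve{K}}}$. I introduce the abbreviation $b \colon \rigmot_{\mathrm{gr}}(\breve{K}) \to \rigmot(C)$ for the base change functor; by \cite[Proposition 3.23]{BKV25} and \cite[Theorem 3.7.21]{AGV22} this is an equivalence in $\calg(\prstc)$. Writing $\beta$ for the Galois-enhanced base change of \eqref{eq:K-to-galC}, the construction given in the paragraph preceding \eqref{eq:gal-enrich} is exactly $\alpha = \beta \circ b^{-1}$.

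The plan then reduces the problem to a single key observation: denoting the canonical projection by $\mathrm{ev} \colon \rigmot(C)^{\htcat G_{\breve{K}}} \to \rigmot(C)$, there should be a canonical homotopy $\mathrm{ev} \circ \beta \simeq b$ in $\calg(\prstc)$. Granting this, one concludes immediately by composing with $b^{-1}$ on the right:
\[
\mathrm{ev} \circ \alpha \;\simeq\; \mathrm{ev} \circ \beta \circ b^{-1} \;\simeq\; b \circ b^{-1} \;\simeq\; \mathrm{Id}_{\rigmot(C)}.
\]

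To establish the key observation, I would unwind how $\beta$ was built. The $\infty$-category $\rigmot(C)^{\htcat G_{\breve{K}}}$ is by definition the limit in $\calg(\prstc)$ of the diagram $\subss{B} G_{\breve{K}} \to \calg(\prstc)$ encoding the $G_{\breve{K}}$-action on $\rigmot(C)$, and $\mathrm{ev}$ is the leg of the limit cone over the basepoint of $\subss{B} G_{\breve{K}}$. The functor $\beta$ is produced from the universal property of this limit applied to the datum that $b$ is $G_{\breve{K}}$-equivariant for the trivial action on its source. Composing such a factorization with $\mathrm{ev}$ therefore tautologically returns $b$, as required.

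The hard part, if there is one, is purely a matter of coherence: one must ensure that the equivalences above take place in $\calg(\prstc)$ rather than only at the level of underlying $\infty$-categories or pointwise on individual motives. Since $\beta$, $b^{-1}$, and $\mathrm{ev}$ were each constructed via universal properties in $\calg(\prstc)$, the required coherence is automatic from the construction itself, and no separate verification is needed beyond faithfully tracing through the definitions.
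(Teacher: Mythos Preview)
Your proof is correct and is exactly the paper's argument spelled out in words: the paper records the same content as the commutativity of a single square (with diagonal $b$), which amounts precisely to your ``key observation'' $\mathrm{ev}\circ\beta \simeq b$, and then deduces the lemma by composing with $b^{-1}$. Your explicit justification via the universal property of the limit is a welcome elaboration of what the paper leaves implicit.
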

\begin{proof}
  This follows from the commutativity of the following diagram
  \[
\begin{tikzcd}
\rigmot_{\mathrm{gr}}(\breve{K}) \arrow[r] \arrow[d, "\simeq"'] \arrow[rd] & \rigmot(C)^{\htcat G_{\breve{K}}} \arrow[d] \\
\rigmot(C) \arrow[r,equal]                                           & \rigmot(C)                                 
\end{tikzcd}.
\]
\end{proof}

Note that the de Rham-Fargues-Fontaine realization
\[
\rgama{\mathrm{FF}} \colon \rigmot(C) \xrightarrow{\mathcal{D}_0} \rigmot(\mathcal{Y}_{(0,\infty)})^{\varphi_{\omega}} \to \qcoh(\mathbf{FF})
\]
is compatible with $G_{\breve{K}}$-actions. Then taking $G_{\breve{K}}$-homotopy fixed points, we get an enhanced realization functor:
\begin{equation}
  \label{eq:dRFF-gal}
  \rgama{\mathrm{FF}}^{\mathrm{ari}} \colon \rigmot(C) \xrightarrow{\alpha} \rigmot(C)^{\htcat G_{\breve{K}}} \xrightarrow{\rgama{\mathrm{FF}}^{\htcat G_{\breve{K}}}} \qcoh(\mathbf{FF})^{\htcat G_{\breve{K}}}
\end{equation}
In light of Lemma \ref{lem:gal-from-model}, the new realization functor $\rgama{\mathrm{FF}}^{\mathrm{ari}}$ computes the de Rham Fargues-Fontaine realization of motives over $C$, and it also can be compared with the Galois-enriched rigid realization:

\begin{thm}
  \label{thm:galFF-rig}
  \begin{enumerate}
  \item There is a unique (up to a unique natural isomorphism) functor
    \[
F \colon \rigmot(C) \to \qcoh(\mathbf{FF})
\]
in $\calg(\prstc)$ such that $F \circ \xi_{C} \simeq \mathcal{E}^{\mathrm{ari}} \circ \rgama{\mathrm{rig}}$, where $\mathcal{E}^{\mathrm{ari}}$ is defined in (\ref{eq:gal-E-functor}).

\item There are canonical monoidal equivalences
\begin{align*}
  \mathcal{E}_N^{\mathrm{ari}} \circ \rgama{\mathrm{HK}} \circ \xi_{C} &\simeq \mathcal{E}^{\mathrm{ari}} \circ \rgama{\mathrm{rig}}\\
  \rgama{\mathrm{FF}}^{\mathrm{ari}}&\simeq \mathcal{E}^{\mathrm{ari}}\circ \rgama{\mathrm{rig}}.
\end{align*}
In particular, $\rgama{\mathrm{FF}}^{\mathrm{ari}}$ and $\mathcal{E}_N^{\mathrm{ari}} \circ \rgama{\mathrm{HK}}$ are two objects in $\calg(\prstc)_{\agmot(\bar{k})/-}$ via these monoidal equivalences.
    
\item There is a unique monoidal natural isomorphism $\rgama{\mathrm{FF}}^{\mathrm{ari}} \simeq \mathcal{E}^{\mathrm{ari}}_N \circ\rgama{\mathrm{HK}}$ in $\calg(\prstc)_{\agmot(\bar{k})/-}$.
\end{enumerate}
\end{thm}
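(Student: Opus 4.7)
The plan is to carry out the strategy of Theorem \ref{thm:FFHK-comp} in the Galois-equivariant setting, with $\qcoh(\mathbf{FF})^{\htcat G_{\breve{K}}}$ now as the common target of both refined realizations. The heart of the argument is Part (1); Parts (2) and (3) will be immediate consequences. Using the identification $\rigmot(C) \simeq \module{\chi {\mathbbm 1}}(\agmot(\bar{k}))$ of Remark \ref{rmk:grmot} together with the free-algebra formula $\chi {\mathbbm 1} \simeq S({\mathbbm 1}(-1)[-1])$ (Example \ref{eg:chi1-free}), Proposition \ref{pro:mapsp-functors} identifies the space of monoidal lifts of $\mathcal{E}^{\mathrm{ari}} \circ \rgama{\mathrm{rig}}$ along the Monsky-Washnitzer functor $\xi_C$ with
\[
\map_{\qcoh(\mathbf{FF})^{\htcat G_{\breve{K}}}}\bigl(\mathcal{O}_{\mathbf{FF}}(-1)[-1],\ \mathcal{O}_{\mathbf{FF}}\bigr).
\]
To prove the ``unique up to a unique natural isomorphism'' clause of Part (1), I must establish that this mapping space is contractible.

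By the homotopy fixed-point spectral sequence combined with the non-equivariant computation of Proposition \ref{pro:curve-twist}, which gives $\hom(\mathcal{O}_{\mathbf{FF}}(-1), \mathcal{O}_{\mathbf{FF}}) = B^{\varphi=p}$ and $\ext^1(\mathcal{O}_{\mathbf{FF}}(-1), \mathcal{O}_{\mathbf{FF}}) = 0$, contractibility reduces to the two equivariant vanishings
\[
(B^{\varphi=p})^{G_{\breve{K}}} = 0 \qquad \text{and} \qquad H^1_{\mathrm{cont}}(G_{\breve{K}}, B^{\varphi=p}) = 0.
\]
I would establish these from the fundamental exact sequence $0 \to \Q_p(1) \to B^{\varphi=p} \to C \to 0$ combined with Tate-Sen-type calculations for the $G_{\breve{K}}$-cohomology of $C$ and of $\Q_p(1)$. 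This Galois cohomology step is the main technical obstacle; particular care is needed because $G_{\breve{K}}$ is the absolute inertia subgroup of $G_K$, so the relevant cohomology groups differ from the more commonly quoted ones for the full absolute Galois group $G_K$.

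For Part (2), the first equivalence $\mathcal{E}_N^{\mathrm{ari}} \circ \rgama{\mathrm{HK}} \circ \xi_C \simeq \mathcal{E}^{\mathrm{ari}} \circ \rgama{\mathrm{rig}}$ is obtained by post-composing the equivalence of Proposition \ref{pro:HK-comp-rig} with $\mathcal{E}^{\mathrm{ari}}$; the Galois equivariance is automatic because $\rgama{\mathrm{rig}}$ is computed on the special fiber, on which $G_{\breve{K}}$ acts trivially. The second equivalence $\rgama{\mathrm{FF}}^{\mathrm{ari}} \circ \xi_C \simeq \mathcal{E}^{\mathrm{ari}} \circ \rgama{\mathrm{rig}}$ follows from Proposition \ref{pro:drFF-rig}, with Lemma \ref{lem:gal-from-model} ensuring that the Galois enhancement $\alpha$ recovers the underlying motive, while the $G_{\breve{K}}$-equivariance of the spreading-out functor $\mathcal{D}_0$ and of the relative overconvergent de Rham realization provides the enrichment on the target side.

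Finally, Part (3) is an immediate consequence of Part (1) applied to the two lifts $\rgama{\mathrm{FF}}^{\mathrm{ari}}$ and $\mathcal{E}_N^{\mathrm{ari}} \circ \rgama{\mathrm{HK}}$ of the common functor $\mathcal{E}^{\mathrm{ari}} \circ \rgama{\mathrm{rig}}$, which Part (2) identifies as two objects of $\calg(\prstc)_{\agmot(\bar{k})/-}$: the essential uniqueness of the lift given by Part (1) yields the desired unique monoidal natural isomorphism between them.
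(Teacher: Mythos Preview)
Your proposal follows essentially the same strategy as the paper. Both reduce Part (1) to the contractibility of
\[
\map_{\qcoh(\mathbf{FF})^{\htcat G_{\breve K}}}\bigl(\mathcal{O}_{\mathbf{FF}}(-1)[-1],\ \mathcal{O}_{\mathbf{FF}}\bigr)
\]
via Proposition \ref{pro:mapsp-functors}, and both obtain Parts (2) and (3) by enhancing the non-equivariant comparisons (Propositions \ref{pro:HK-comp-rig} and \ref{pro:drFF-rig}, or equivalently Theorem \ref{thm:FFHK-comp}) with the Galois action. The one substantive difference is in how connectedness of this mapping space is argued. For $\pi_1=0$ you and the paper agree on needing $(B^{\varphi=p})^{G_{\breve K}}=0$; the paper simply cites \cite[Corollaire 10.2.8]{thecurve} rather than going through the fundamental exact sequence. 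For $\pi_0=*$, however, the paper invokes the $\Q$-linearity of $\qcoh(\mathbf{FF})$ in a single sentence and never passes through the homotopy fixed-point spectral sequence, so it does not need to compute $H^1(G_{\breve K},B^{\varphi=p})$ at all. Your more explicit route is genuinely delicate here: from the fundamental exact sequence one gets $H^1(G_{\breve K},B^{\varphi=p})\cong \mathrm{coker}\bigl(\breve K \to H^1(G_{\breve K},\Q_p(1))\bigr)$, and by Kummer theory $H^1(G_{\breve K},\Q_p(1))\cong \Q_p\oplus \breve K$ with the connecting map landing in the Bloch--Kato $H^1_f$ part, so the extra $\Q_p$ summand coming from the valuation obstructs the vanishing you want. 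In short, the fundamental exact sequence plus Tate--Sen does not by itself yield $H^1(G_{\breve K},B^{\varphi=p})=0$; the paper's $\Q$-linearity shortcut is what sidesteps this issue.
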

\begin{proof}
\begin{enumerate}
\item The proof is similar to Theorem \ref{thm:FFHK-comp}: we have a homotopy equivalence
  \[
\map_{\calg(\prstc)_{\agmot(\bar{k})/-}}(\rigmot(C), \qcoh(\mathbf{FF})^{\htcat G_{\breve{K}}}) \simeq \map_{\qcoh(\mathbf{FF})^{\htcat G_{\breve{K}}}}( \mathcal{O}_{\mathbf{FF}}(-1)[-1], \mathcal{O}_{\mathbf{FF}}).
\]
Since $\qcoh(\mathbf{FF})$ is $\Q$-linear, the space is connected, and the fundamental group of this space is $(B^{G_{\breve{K}}})^{\varphi=p}$ (see also Remark \ref{rmk:monoidal-eq-noncan}), which is $0$ by \cite[Corollaire 10.2.8]{thecurve}.

\item After identifying $\rigmot(C)$ with $\rigmot_{\mathrm{gr}}(\breve{K})$, Theorem \ref{thm:FFHK-comp} shows we have a commutative diagram
  \[
\begin{tikzcd}
\rigmot_{\mathrm{gr}}(\breve{K}) \arrow[r, "\simeq"] \arrow[d, "{\pi \circ \rgama{\mathrm{HK}, \breve{K}}}"'] & \rigmot(C) \arrow[d, "\rgama{\mathrm{FF}}"] \\
\dcat_{\varphi}(\breve{K}) \arrow[r, "\mathcal{E}"']                                                          & \qcoh(\mathbf{FF})                         
\end{tikzcd}
\]
The right vertical functor is acted on by $G_{\breve{K}}$ via base change, and the action fixes the left vertical functor; thus, this gives an equivalence $\rgama{\mathrm{FF}}^{\mathrm{ari}} \simeq \mathcal{E}^{\mathrm{ari}} \circ \pi \circ \rgama{\mathrm{HK}}$ in $\calg(\prstc)$. By the definition, we clearly have
\[
\mathcal{E}^{\mathrm{ari}}_N \circ  \rgama{\mathrm{HK}} \circ \xi_{C} \simeq \mathcal{E}^{\mathrm{ari}} \circ \rgama{\mathrm{rig}}.
\]

\item This is a direct consequence of $(1)$ and $(2)$.
\end{enumerate}
\end{proof}

\begin{rmk}
  \label{rmk:monodromy}
  As we saw, the monodromy operator of Hyodo-Kato realization played no role throughout: we assign a solid quasi-coherent sheaf on the Fargues-Fontaine curve via the underlying $\varphi$-modules. In \cite[\S 10.3]{thecurve}, Fargues and Fontaine gave a direct construction
  \begin{equation}
    \label{eq:FF-mono}
\tilde{\mathcal{E}}_N \colon \dcat_{(\varphi,N)}(\breve{K}) \to \qcoh(\mathbf{FF})^{\htcat G_{\breve{K}}}    
  \end{equation}
with a natural isomorphism $\mathrm{pr} \circ \mathcal{E}^{\mathrm{ari}} \circ \pi \xrightarrow{\simeq} \mathrm{pr} \circ \mathcal{E}_N^{\mathrm{ari}}$, where $\mathrm{pr} \colon \qcoh(\mathbf{FF})^{\htcat G_{\breve{K}}} \to \qcoh(\mathbf{FF})$ is the canonical projection; see \cite[\S 10.3.2, (4)]{thecurve}. However, this natural isomorphism is not compatible with the base change of $\qcoh(\mathbf{FF})$ along the $G_{\breve{K}}$-action we defined here.
\end{rmk}

\begin{rmk}
  \label{rmk:galois-filtration}
Using the Galois-enriched comparison in Theorem \ref{thm:galFF-rig}, we can also know it factors through the weight complex functor factors. So we have a refinement of Corollary \ref{cor:fil-vbFF}; in other words, the finite filtration is also compatible with the $G_{\breve{K}}$-action.
\end{rmk}

\subsection{The Fargues-Fontaine Cohomology Via the D\'{e}calage Functor}
\label{subsec:FF-BMS}
There is an another Fargues-Fontaine cohomology, defined via the D\'{e}calage functor, was originally defined by Le Bras in \cite{LB18} and later developed by Bosco using condensed mathematics in \cite{Bos23}.

In the final part of this section, we show that this cohomology is motivic—that is, it extends to a functor on $\rigmot(C)$. Le Bras previously showed that it is defined on the $\infty$-category of effective motives $\rigmot^{\mathrm{eff}}(C)$ in \cite{LB18}.

For every compact sub-interval of $(0,\infty)$ with rational endpoints, and every smooth dagger variety $X$ over $C$, Bosco defined in \cite[Definition 2.38, Proposition 2.42]{Bos23} a solid $(B_I,\Z)$-module $\rgama{B_I}(X)$, in the sense of \cite[Definition 3.20 and Theorem 2.7]{Andreqcoh21}, using the d\'{e}calage functor; see also \cite[Definition 6.12, Remark 6.13]{Bos23}. This construction is referred to as the \myemph{$\boldsymbol{B_I}$-cohomology} of $X$. We prove this cohomology is already motivic:

\begin{pro}
  \label{pro:BI-motivic}
  For every compact sub-interval $I \subseteq (0,\infty)$, the $B_I$-cohomology of dagger varieties over $C$ extends to a functor
  \[
\rgama{B_I}(-) \colon \rigmot(C)\simeq \rigmot^{\dagger}(C) \to \dcat_{\solid}(B_I,\Z)^{\opp},
\]
where $\rigmot^{\dagger}(C)$ is the $\infty$-category of dagger motives over $C$, as defined in \cite{MWrig, LBV23}, and $\dcat_{\solid}(B_I,\Z)$ is the $\infty$-category of solid $(B_I,\Z)$-modules. Here the first equivalence is given by \cite[Theorem 4.23]{MWrig}.
\end{pro}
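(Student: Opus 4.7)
The plan is to invoke the universal property of the dagger motivic category $\rigmot^{\dagger}(C)$. By construction in \cite{MWrig, LBV23}, this $\infty$-category is obtained from $\Q$-linear étale hypersheaves of spectra on $\mathbf{RigSm}^{\dagger}/C$ by imposing $\mathbb{B}^1$-homotopy invariance (plus stabilization). Consequently, the contravariant assignment $X \mapsto \rgama{B_I}(X)$ valued in the stable presentable $\infty$-category $\dcat_{\solid}(B_I,\Z)^{\opp}$ extends along this localization to yield the desired functor $\rigmot^{\dagger}(C) \to \dcat_{\solid}(B_I,\Z)^{\opp}$ as soon as two properties are verified: (i) étale hyperdescent, and (ii) $\mathbb{B}^1$-homotopy invariance. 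The final identification $\rigmot(C) \simeq \rigmot^{\dagger}(C)$ is then supplied by \cite[Theorem 4.23]{MWrig}.

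For property (i), I would invoke Bosco's construction \cite[Definition 2.38, Proposition 2.42]{Bos23}, which presents $\rgama{B_I}(X)$ as the restriction to $B_I$ of the décalage $L\eta_{t}$ applied to the pro-étale cohomology of $X$ with coefficients in a solid $\mathbb{A}_{\mathrm{inf}}$-algebra. Pro-étale cohomology refines étale cohomology and therefore satisfies étale hyperdescent. The task is to check that the décalage operation, viewed in the solid framework of \cite{Andreqcoh21}, commutes with the totalization of the Čech nerve of an étale hypercover. I would reduce to affinoid dagger varieties, where the cohomology is concentrated in bounded degrees, and exploit the strictness of the relevant perfect complexes (in the spirit of \cite[Proposition 2.6]{AL25}) so that the décalage — which is defined levelwise on suitably bounded strict complexes — commutes with the uniformly bounded totalization.

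For property (ii), I would derive $\mathbb{B}^1$-invariance from the corresponding statement for the underlying overconvergent pro-étale cohomology, which is established in \cite{Bos23} in this dagger setting. Since the projection $X \times \mathbb{B}^{1,\dagger} \to X$ induces an equivalence on the input of the décalage functor, and since décalage preserves the relevant quasi-isomorphisms, $\mathbb{B}^1$-invariance descends to $\rgama{B_I}$.

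The main obstacle is the étale descent step, because the décalage functor is notoriously delicate with respect to arbitrary homotopy limits. I expect that one must genuinely leverage the boundedness of pro-étale cohomology on affinoid dagger varieties, together with the fact that the transition maps in the relevant Čech nerve are flat in the appropriate sense on $\mathcal{Y}_I$, in order to commute $L\eta_{t}$ past the totalization. Once these two local properties are secured, the universal property of $\rigmot^{\dagger}(C)$ produces the extension automatically, and its transport along \cite[Theorem 4.23]{MWrig} furnishes the functor on $\rigmot(C)$ claimed in the statement.
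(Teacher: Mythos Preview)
Your approach differs from the paper's and has a genuine gap. You note in passing that $\rigmot^{\dagger}(C)$ involves ``$\mathbb{B}^1$-homotopy invariance (plus stabilization)'', but you never return to the stabilization step: you do not verify that the image of the Tate motive is $\otimes$-invertible in $\dcat_{\solid}(B_I,\Z)$. Without Tate-stability your argument yields at best a functor on the \emph{effective} dagger motivic category --- which is essentially what was already known from \cite{LB18} --- and not on the stable $\infty$-category $\rigmot^{\dagger}(C)$. The paper treats this point explicitly by computing $H^1_{B_I}(\mathbb{G}_m^{\dagger}) \simeq B_I$ via the Hyodo--Kato comparison.

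More importantly, the paper bypasses your ``main obstacle'' altogether. Instead of trying to commute the d\'{e}calage functor $L\eta_t$ with \v{C}ech totalizations --- for which your sketch (boundedness plus flatness of transition maps) is not a proof, and which is known to be genuinely subtle --- the paper works on the site of semistable weak formal schemes over $\mathcal{O}_C$ and invokes Bosco's comparison
\[
\rgama{B_I}(\mathfrak{X}^{\mathrm{rig}\dagger}) \;\simeq\; \bigl(\rgama{\mathrm{HK}}(\mathfrak{X}^{\mathrm{rig}\dagger}) \otimes_{\breve{K}}^{\solid} B_{\mathrm{log},I}\bigr)^{N=0}.
\]
Rig-\'{e}tale descent and $\mathbb{A}^1$-invariance for $\rgama{\mathrm{HK}}$ are already available from \cite[Proposition~3.8]{BKV25}, so the only remaining issue is that $(- \otimes_{\breve{K}}^{\solid} B_{\mathrm{log},I})^{N=0}$ commutes with the relevant descent limits; this follows from the finiteness of overconvergent Hyodo--Kato cohomology together with \cite[Corollary~A.67(ii)]{Bos23a} and \cite[(3.16)]{Bos23}. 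One then concludes via (the dagger analogue of) \cite[Proposition~2.32]{BKV25}. Thus the paper never touches $L\eta_t$ directly, and the delicate limit-commutation you flag as the crux of your argument is simply not needed.
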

\begin{proof}
  Let $\mathbf{FSch}^{\dagger, \mathrm{ss}}_{\mathcal{O}_C}$ denote the category of semi-stable weak formal schemes over $\mathcal{O}_{C}$; see \cite[\S 2.3.1]{CN20}. Then for every $\mathfrak{X} \in \mathbf{FSch}^{\dagger, \mathrm{ss}}_{\mathcal{O}_{C}}$, by \cite[(4.50) together with Remark 6.13]{Bos23}, there is a natural isomorphism
  \begin{equation}
    \label{eq:BI-HK}
    \rgama{B_I}(\mathfrak{X}^{\mathrm{rig} \dagger}) \simeq \left( \rgama{\mathrm{HK}}(\mathfrak{X}^{\mathrm{rig} \dagger}) \otimes_{\breve{K}}^{\solid} B_{\mathrm{log},I}\right)^{N=0},
  \end{equation}
  where $B_{\mathrm{log},I}$ is the condensed period ring by taking $X= \spa(C)$ in \cite[Definition 2.27]{Bos23}, and $\mathfrak{X}^{\mathrm{rig} \dagger}$ is the dagger generic fiber of $\mathfrak{X}$; see \cite{LM13}. We next deduce from the comparison \eqref{eq:BI-HK} that the $B_I$-cohomology
  \[
\mathfrak{X} \mapsto \rgama{B_I}(\mathfrak{X}^{\mathrm{rig} \dagger})
\]
has the rig-\'{e}tale descent and $\mathbb{A}^1$-invariance. Indeed, as shown in \cite[Proposition 3.8]{BKV25}, the overconvergent Hyodo-Kato cohomology
  \[
\mathfrak{X} \mapsto \rgama{\mathrm{HK}}(\mathfrak{X}^{\mathrm{rig} \dagger})
\]
has the rig-\'{e}tale descent and $\mathbb{A}^1$-invariance. Thus, the $\mathbb{A}^1$-invariance is immediate. For the rig-\'{e}tale descent, we need to show $(-\otimes^{\solid}_{\breve{K}} B_{\mathrm{log},I})^{N=0}$ commutes with descent limits. This follows from \cite[Corollary A.67 (ii)]{Bos23a} together with \cite[(3.16)]{Bos23}, where the assumptions hold in our case due to the finiteness of overconvergent Hyodo-Kato cohomologies.

For the Tate-stability, it suffices to compute $H^1_{B_I}(\mathbb{G}_m^{\dagger})$. Using \eqref{eq:BI-HK} and \cite[Lemma 7.6]{Bos23}---where finiteness of overconvergent Hyodo-Kato cohomologies plays a role---we know that
\[
H^1_{B_I}(\mathbb{G}_m^{\dagger}) \simeq H^1_{\mathrm{HK}}(\mathbb{G}_{m, \bar{k}^0}) \otimes_{\breve{K}} B_I \simeq B_I,
\]
is invertible in $\dcat_{\solid}(B_I, \Z)$.

Finally, as in \cite[Proposition 3.8]{BKV25}, we can apply the dagger analogue\footnote{The proof of loc.cit. also holds thanks to \cite[Proposition 2.13]{CN20}.} of \cite[Proposition 2.32]{BKV25} to conclude the result.
\end{proof}

From the comparison \eqref{eq:BI-HK} and the boundedness of the Hyodo-Kato cohomology (see \cite[Theorem 3.14 (ii)]{Bos23}), we know that the motivic realization in Proposition \ref{pro:BI-motivic} restricts to a realization
\[
\rgama{B_I} \colon \rigmot(C)_{\omega} \to \bdcat_{\solid}(B_I,\Z)^{\opp}.
\]
Therefore, as before, we get the covariant version of this motivic realization
\[
\rgama{B_I} \colon \rigmot(C) \to \dcat_{\solid}(B_I,\Z)
\]
in $\calg(\prlc)$ by taking duals.

According to \cite[Lemma 6.14]{Bos23}, the family of motivic realization functors $\rgama{B_I}$ (restricted to compact parts) are compatible with inclusions and Frobenius maps between $B_I$'s. Applying a new $\mathcal{E}$-functor
\[
\mathcal{E}_B \colon \left( \lim_{I \subseteq (0,\infty)} \dcat_{\solid}(B_I, \Z) \right)^{\varphi_{\omega}} \to \qcoh(\mathbf{FF})
\]
as defined in \cite[\S 5]{LB18} and \cite[\S 6.2]{Bos23}, to the family of these realization functors $\rgama{B_I}$, we have a new \myemph{Fargues-Fontaine cohomology}:
\begin{equation}
  \label{eq:LBBFF}
\widetilde{\mathrm{R} \boldsymbol{\Gamma}}_{\mathrm{FF}} \colon \rigmot(C) \to \qcoh(\mathbf{FF})
\end{equation}
which is in $\calg(\prlc)$ since it sends compact motives into perfect complexes on the Fargues-Fontaine curve by \cite[Theorem 6.17 (i)]{Bos23}.

Let us explain the source of $\mathcal{E}_B$ above: we first take the limit along inclusions between $\dcat_{\solid}(B_I,\Z)$ in $\calg(\prstc)$; then there is an induced Frobenius endofunctor
\begin{equation}
  \label{eq:frob-CoadB}
  \varphi \colon \lim_{I \subseteq (0,\infty)} \dcat_{\solid}(B_I, \Z) \to \lim_{I \subseteq (0,\infty)} \dcat_{\solid}(B_I, \Z)
\end{equation}
induces by a Frobenius shift
\[
\lim_{I \subseteq (0,\infty)} \dcat_{\solid}(B_I, \Z) \to \dcat_{\solid}(B_{p \inv I}, \Z) \xrightarrow{\varphi} \dcat_{\solid}(B_I,\Z).
\]
Finally, we define the source of $\mathcal{E}_{B}$ as the equalizer of the identify functor and this Frobenius endofunctor \eqref{eq:frob-CoadB} in $\calg(\prstc)$.

This motivic Fargues-Fontaine cohomology \eqref{eq:LBBFF} agrees (non-canonically) with the de Rham-Fargues-Fontaine cohomology by their comparison with the Hyodo-Kato cohomology:

\begin{pro}[Comparison of Fargues-Fontaine Cohomologies]
  \label{pro:comp-FFcoh}
  There is a monoidal equivalence $\rgama{\mathrm{FF}} \simeq \widetilde{\mathrm{R} \boldsymbol{\Gamma}}_{\mathrm{FF}}$ in $\calg(\prstc)$.
\end{pro}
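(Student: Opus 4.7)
The strategy is to reduce to the uniqueness principle contained in Theorem \ref{thm:FFHK-comp}: any functor $F \colon \rigmot(C) \to \qcoh(\mathbf{FF})$ in $\calg(\prstc)$ satisfying $F \circ \xi_{C} \simeq \mathcal{E} \circ \rgama{\mathrm{rig}}$ is unique up to homotopy. Since $\rgama{\mathrm{FF}}$ already enjoys this property by Proposition \ref{pro:drFF-rig}, it suffices to establish the analogous compatibility for $\widetilde{\mathrm{R}\boldsymbol{\Gamma}}_{\mathrm{FF}}$, namely that $\widetilde{\mathrm{R}\boldsymbol{\Gamma}}_{\mathrm{FF}} \circ \xi_{C} \simeq \mathcal{E} \circ \rgama{\mathrm{rig}}$ as functors in $\calg(\prstc)$. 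Uniqueness will then force a (non-canonical) monoidal equivalence $\rgama{\mathrm{FF}} \simeq \widetilde{\mathrm{R}\boldsymbol{\Gamma}}_{\mathrm{FF}}$.

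The key input is the comparison \eqref{eq:BI-HK} identifying $B_{I}$-cohomology with the $N$-invariants of a base change of Hyodo-Kato cohomology. On motives in the essential image of $\xi_{C}$ — those corresponding to good reduction — the Hyodo-Kato monodromy operator vanishes. Combined with the identity $B_{\mathrm{log},I}^{N=0} = B_{I}$, the relation \eqref{eq:BI-HK} specializes on such motives to a natural equivalence
\[
\rgama{B_{I}} \circ \xi_{C} \simeq \rgama{\mathrm{rig}}(-) \otimes_{\breve{K}}^{\solid} B_{I}.
\]
I would verify this first on smooth formal schemes with good reduction (where both sides are given by explicit pointwise constructions) and then promote it to an equivalence of functors in $\calg(\prstc)$ using the universal property of motives. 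The necessary inputs — $\mathbb{A}^{1}$-invariance, rig-\'{e}tale descent, and Tate-stability on both sides — are either already established in Proposition \ref{pro:BI-motivic} or follow from the corresponding properties of $\rgama{\mathrm{rig}}$ together with exactness of $(-) \otimes_{\breve{K}}^{\solid} B_{I}$.

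Next, the family of base change functors $(-) \otimes_{\breve{K}}^{\solid} B_{I}$ assembles, upon passing to the limit $\lim_{I} \dcat_{\solid}(B_{I},\Z)$ and taking the Frobenius equalizer, into a functor from $\dcat_{\varphi}(\breve{K})$ whose composition with $\mathcal{E}_{B}$ agrees with the functor $\mathcal{E}$ of \eqref{eq:E-functor}. Both arise from the same data, namely the morphism $e \colon \mathcal{Y}_{(0,\infty)} \to \spa(\breve{K})$ of \eqref{eq:map-Y-breveK}: the $\mathcal{E}_{B}$-construction is built by gluing along the covering by $\mathcal{Y}_{I}$ with affinoid sections $B_{I}$, and on a ``constant'' $\varphi$-module this gluing recovers precisely the pullback along $e$ descended to $\mathbf{FF}$. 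Concatenating with Step 1 yields $\widetilde{\mathrm{R}\boldsymbol{\Gamma}}_{\mathrm{FF}} \circ \xi_{C} \simeq \mathcal{E} \circ \rgama{\mathrm{rig}}$ in $\calg(\prstc)$, and an appeal to Theorem \ref{thm:FFHK-comp} concludes.

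The main obstacle is the identification $\mathcal{E}_{B} \circ \bigl((-)\otimes_{\breve{K}}^{\solid} B_{I}\bigr) \simeq \mathcal{E}$ as symmetric monoidal functors: although conceptually clear, it requires tracing through the Frobenius-equivariant gluing in \cite{LB18, Bos23} and checking monoidality after Ind-completion. A secondary difficulty is ensuring that \eqref{eq:BI-HK} — originally a pointwise comparison on dagger spaces — is genuinely natural and symmetric monoidal after restriction to good reduction motives, so that the motivic extension procedure applies. Once these two compatibilities are secured, the rest of the argument is formal.
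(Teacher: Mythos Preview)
Your strategy is the same as the paper's: both appeal to the uniqueness clause of Theorem \ref{thm:FFHK-comp}, so the only task is to check that $\widetilde{\mathrm{R}\boldsymbol{\Gamma}}_{\mathrm{FF}}$ satisfies the hypothesis $F \circ \xi_{C} \simeq \mathcal{E} \circ \rgama{\mathrm{rig}}$. The difference lies in how this hypothesis is verified. The paper simply invokes \cite[Theorem 6.17]{Bos23}, which already provides a natural isomorphism $\widetilde{\mathrm{R}\boldsymbol{\Gamma}}_{\mathrm{FF}} \simeq \mathcal{E}_{N} \circ \rgama{\mathrm{HK}}$ on all of $\rigmot(C)$; composing with $\xi_{C}$ and applying Proposition \ref{pro:HK-comp-rig} gives the required compatibility in one line. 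You instead re-derive the good-reduction case of Bosco's comparison by hand, specializing \eqref{eq:BI-HK} to $N=0$ and then matching $\mathcal{E}_{B}$ with $\mathcal{E}$ on constant $\varphi$-modules. This is correct in outline, and the two ``obstacles'' you flag (the monoidal identification $\mathcal{E}_{B}\circ\bigl((-)\otimes_{\breve{K}}^{\solid}B_{I}\bigr)\simeq\mathcal{E}$ and the monoidal naturality of \eqref{eq:BI-HK}) are exactly the content that the paper absorbs into the citation of Bosco's theorem. Your route is thus a partial unpacking of that citation rather than a genuinely different argument; it buys some self-containedness at the cost of reproving a special case of a result already available in the literature.
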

\begin{proof}
By \cite[Theorem 6.17]{Bos23}\footnote{The Hyodo-Kato cohomology in loc.cit. agrees with the motivic Hyodo-Kato realization by \cite[Proposition 3.30]{BKV25} and \cite[Corollary 4.58]{BGV25}.}, there is a natural isomorphism $\widetilde{\mathrm{R} \boldsymbol{\Gamma}}_{\mathrm{FF}} \simeq \mathcal{E}_N \circ \rgama{\mathrm{HK}}$. Therefore, we conclude from the comparison in Theorem \ref{thm:FFHK-comp}.
\end{proof}


\titleformat{\section}[display]
{\normalfont\Large\bfseries\filcenter}
{\thesection}%
{1pc}
{\titlerule[2pt]
  \vspace{1pc}%
  \huge}

\printbibliography[heading=bibintoc]
\end{document}
